\newtheorem{theo}{Theorem}[section]
\newtheorem{lemm}[theo]{Lemma}
\newtheorem{prop}[theo]{Proposition}
\newtheorem{coro}[theo]{Corollary}
\theoremstyle{definition}
\newtheorem{defi}[theo]{Definition}
\newtheorem{example}[theo]{Example}
\newtheorem{rem}[theo]{Remark}
\newtheorem*{theo*}{Theorem}
\numberwithin{equation}{section}
\newenvironment{abstracts}{%
  \ifx\maketitle\relax
    \ClassWarning{\@classname}{Abstract should precede
      \protect\maketitle\space in AMS document classes; reported}%
  \fi
  \global\setbox\abstractbox=\vtop \bgroup
    \normalfont\Small
    \list{}{\labelwidth\z@
      \leftmargin3pc \rightmargin\leftmargin
      \listparindent\normalparindent \itemindent\z@
      \parsep\z@ \@plus\p@
      
      \itemsep\medskipamount
    }%
}{%
  \endlist\egroup
  \ifx\@setabstract\relax \@setabstracta \fi
}
\newcommand{\abstractin}[1]{%
  \otherlanguage{#1}%
  \item[\hskip\labelsep\scshape\abstractname.]%
}
\newcommand{\op}{^{\mathrm{op}}}
\newcommand{\on}{\operatorname}
\newcommand{\Hom}{\mathsf{Hom}}
\newcommand{\Aut}{\mathsf{Aut}}
\newcommand{\AUT}{\underline{\Aut}}
\newcommand{\Fun}{\on{Fun}}
\newcommand{\ho}{\mathrm{ho}}
\newcommand{\oper}{\mathcal}
\newcommand{\C}{\mathbb{C}}
\newcommand{\Q}{\mathbb{Q}}
\newcommand{\Z}{\mathbb{Z}}
\newcommand{\R}{\mathbb{R}}
\newcommand{\cat}{\mathrm}
\newcommand{\icat}{\mathbf}
\newcommand{\inn}{\icat{N}} 
\newcommand{\ch}{\cat{Ch}_*}
\newcommand{\chb}{\cat{Ch}_*^b}
\newcommand{\ich}{\icat{Ch}_*}
\newcommand{\ceil}[1]{\lceil#1\rceil}
\newcommand{\D}{\mathscr{D}} 
\newcommand{\Ker}{\mathrm{Ker}}
\newcommand{\Img}{\mathrm{Im}}
\newcommand{\Vect}{\mathrm{Vect}}
\newcommand{\lra}{\longrightarrow}
\newcommand{\MHS}{\mathrm{MHS}}
\newcommand{\MHC}{\mathrm{MHC}}
\newcommand{\iMHC}{\mathbf{MHC}}
\newcommand{\Var}{\mathrm{Var}}
\newcommand{\Sm}{\mathrm{Sm}}
\newcommand{\chpure}[1]{\cat{Ch}_*(gr\Aa)^{#1\text{-pure}}}
\newcommand{\Aa}{\mathcal{A}}
\newcommand{\Cc}{\mathcal{C}}
\newcommand{\Dd}{\mathcal{D}}
\newcommand{\Ff}{\mathcal{F}}
\newcommand{\Gg}{\mathcal{G}}
\newcommand{\Hh}{\mathcal{H}}
\newcommand{\Kk}{\mathcal{K}}
\newcommand{\Mm}{\mathcal{M}}
\newcommand{\Tt}{\mathcal{T}}
\newcommand{\kk}{\Bbbk}
\newcommand{\TW}{\text{\tiny{$\mathrm{TW}$}}}
\title{Mixed Hodge structures and formality of symmetric monoidal functors}
\author{Joana Cirici}
\author{Geoffroy Horel}
\address{Deaprtament de Matem\`{a}tiques i Inform\`{a}tica\\ Universitat de Barcelona
Gran Via 585\\
08008 Barcelona}
\email{jcirici@ub.edu}
\address{LAGA, Institut Galilée, Universit\'{e} Paris 13\\ 99 avenue Jean-Baptiste Cl\'{e}ment\\ 93430 Villetaneuse\\ France}
\email{horel@math.univ-paris13.fr}
\thanks{Cirici would like to acknowledge financial support from the DFG (SPP-1786) and AGAUR (Beatriu de
Pinós Program) and partial support from the AEI/FEDER, UE (MTM2016-76453-C2-2-P).
Horel acknowledges support from the project ANR-16-CE40-0003 ChroK}
\keywords{Mixed Hodge structures, formality, operads, rational homotopy}
\begin{document}


\begin{abstracts}
\abstractin{english}
We use mixed Hodge theory to show that the functor of
singular chains with rational coefficients is formal as a lax symmetric monoidal functor, when restricted 
to complex varieties whose weight filtration in cohomology satisfies a certain purity property.
This has direct applications to the formality of operads or, more generally, of
algebraic structures encoded by a colored operad.
We also prove a dual statement, with applications to
formality in the context of rational homotopy theory. In the general case of complex varieties with non-pure weight filtration,
we relate the singular chains functor to a functor defined via the first term of the weight spectral sequence.

\abstractin{french}
Nous utilisons la théorie de Hodge mixte pour montrer que le foncteur des chaînes singulières à coefficients rationnels est formel, comme foncteur symétrique monoïdal lax, lorsqu’on le restreint aux variétés complexes dont la filtration par le poids en cohomologie satisfait une
certaine propriété de pureté. Ce résultat a des applications directes à la formalité d’opérades ou plus généralement à des structures algébriques encodées par une opérade colorée. Nous prouvons aussi le résultat dual, avec des applications à la formalité dans le contexte de la théorie de l’homotopie rationnelle. Dans le cas général d’une variété dont la filtration par le poids n'est pas pure, nous relions le foncteur des chaînes singulières à un foncteur défini par la première page de
la suite spectrale des poids.

\end{abstracts}

\selectlanguage{english}

\maketitle

\section{Introduction}

There is a long tradition of using Hodge theory as a tool for proving formality results. The first instance of this idea can be found in \cite{DGMS} where the authors prove that compact K\"{a}hler manifolds are formal (i.e. the commutative differential graded algebra of differential forms is quasi-isomorphic to its cohomology). In the introduction of that paper, the authors explain that their intuition came from the theory of étale cohomology and the fact that the degree $n$ étale cohomology group of a smooth projective variety over a finite field is pure of weight $n$. This purity is what heuristically prevents the existence of non-trivial Massey products. In the setting of complex algebraic geometry, Deligne introduced in \cite{DeHII,DeHIII} a filtration on the rational cohomology of every complex algebraic variety $X$, called the \textit{weight filtration}, with the property that each of the successive quotients of this filtration behaves as the cohomology of a smooth projective variety, in the sense that it has a Hodge-type decomposition. Deligne's mixed Hodge theory was subsequently promoted to the rational homotopy of complex algebraic varieties (see \cite{Mo}, \cite{Ha}, \cite{Na}). This can then be used to make the intuition of the introduction of \cite{DGMS} precise. In \cite{Dupont} and \cite{ChCi1}, it is shown that purity of the weight filtration in cohomology implies formality, in the sense of rational homotopy, of the underlying topological space. However, the treatment of the theory in these references lacks functoriality and is restricted to smooth varieties in the first paper and to projective varieties in the second. 

In another direction, in the paper \cite{santosmoduli}, the authors elaborate on the method of \cite{DGMS} and prove that operads (as well as cyclic operads, modular operads, etc.) internal to the category of compact K\"{a}hler manifolds are formal. Their strategy is to introduce the functor of de Rham currents which is a functor from compact Kähler manifolds to chain complexes that is lax symmetric monoidal and quasi-isomorphic to the singular chain functor as a lax symmetric monoidal functor. Then they show that this functor is formal as a lax symmetric monoidal functor. Recall that, if $\Cc$ is a symmetric monoidal category and $\Aa$ is an abelian symmetric monoidal category, a lax symmetric monoidal functor $F:\Cc\lra \ch(\Aa)$ is said to be formal if it is weakly equivalent to $H\circ F$ in the category of lax symmetric monoidal functors. It is then straightforward to see that such functors send operads in $\Cc$ to formal operads in $\ch(\Aa)$. The functoriality also immediately gives us that a map of operads in $\Cc$ is sent to a formal map of operads or that an operad with an action of a group $G$ is sent to a formal operad with a $G$-action. Of course, there is nothing specific about operads in these statements and they would be equally true for monoids, cyclic operads, modular operads, or more generally any algebraic structure that can be encoded by a colored operad.

The purpose of this paper is to push this idea of formality of symmetric monoidal functors from complex algebraic varieties in several directions in order to prove the most general possible theorem of the form ``purity implies formality''. Before explaining our results more precisely, we need to introduce a bit of terminology.

Let $X$ be a complex algebraic variety. Deligne's weight filtration on the rational $n$-th cohomology vector space of $X$ is bounded by 
\[0=W_{-1}H^n(X,\Q)\subseteq W_{0}H^n(X,\Q)\subseteq \cdots \subseteq W_{2n}H^n(X,\Q)=H^n(X,\Q).\]
If $X$ is smooth then $W_{n-1}H^n(X,\Q)=0$, while if $X$ is projective $W_{n}H^n(X,\Q)=H^n(X,\Q)$.
In particular, if $X$ is smooth and projective then
we have 
\[0=W_{n-1}H^n(X,\Q)\subseteq W_{n}H^n(X,\Q)=H^n(X,\Q).\]
In this case, the weight filtration on $H^n(X,\Q)$ is said to be \textit{pure of weight $n$}.
More generally, for $\alpha$ a rational number and $X$ a complex algebraic variety, we say that the weight filtration on $H^*(X,\Q)$ is \textit{$\alpha$-pure} if, for all $n\geq 0$, we have
\[Gr^W_pH^n(X,\Q):={{W_pH^n(X,\Q)}\over{W_{p-1}H^n(X,\Q)}}=0\text{ for all }p\neq \alpha n.\]
The bounds on the weight filtration tell us that this makes sense only when $0\leq \alpha\leq 2$.
Note as well that if we write $\alpha=a/b$ with $(a,b)=1$, $\alpha$-purity implies that the cohomology is concentrated in degrees that are divisible 
by $b$, and that $H^{bn}(X,\Q)$ is pure of weight $an$.

Aside from smooth projective varieties, some well-known examples of varieties 
with $1$-pure weight filtration are:
projective varieties whose underlying topological space is a $\Q$-homology manifold (\cite[Theorem 8.2.4]{DeHIII})
and the moduli spaces $\Mm_{Dol}$ and $\Mm_{dR}$ appearing in the non-abelian Hodge correspondence (\cite{Hausel}).
Some examples of varieties with $2$-pure weight filtration are:
complements of hyperplane arrangements (\cite{kimweights}), which include the
moduli spaces
$\Mm_{0,n}$ of smooth projective curves of genus 0 with $n$ marked points,
and complements of toric arrangements (\cite{Dupont}). As we shall see in Section $\ref{Section_formalschemes}$, complements of codimension $d$ subspace arrangements are examples of smooth varieties whose weight filtration in cohomology is $2d/(2d-1)$-pure. For instance, this includes configuration spaces of points in $\C^d$.

Our main result is Theorem $\ref{theo: main covariant}$. We show that, for a non-zero rational number $\alpha$, the 
singular chains functor 
\[S_*(-,\Q):\Var_\C\lra \ch(\Q)\]
is formal as a lax symmetric monoidal 
functor when restricted to complex varieties whose weight filtration in cohomology is $\alpha$-pure.
Here $\Var_\C$ denotes the category of complex algebraic varieties (i.e the category of schemes over $\C$ that are reduced, separated and of finite type).
This generalizes the main result of \cite{santosmoduli} on the formality of $S_*(X,\Q)$ for
any operad $X$ in smooth projective varieties, to the case of operads in possibly singular and/or non-compact varieties
with pure weight filtration in cohomology.

As direct applications of the above result, we prove formality of the operad of
singular chains of some operads in complex varieties, such as the noncommutative analog of the (framed) little 2-discs operad introduced in \cite{dotsenkononcommutative} and the monoid of self-maps of the complex projective line studied by Cazanave in \cite{cazanavealgebraic}
(see Theorems $\ref{theo: little}$ and $\ref{theo: monoidformal}$).
We also reinterpret in the language of mixed Hodge theory the proofs of the formality of the little disks operad and Getzler's gravity operad appearing in \cite{Petersen} and \cite{dupontgravity}. These last two results do not fit directly in our framework, since the little disks operad and the gravity operad do not quite come from operads in algebraic varieties. However,
the action of the Grothendieck-Teichmüller group provides a bridge to mixed Hodge theory.

In Theorem \ref{theo: main contravariant} we prove a dual statement of our main result, 
showing that Sullivan's functor of piecewise linear forms
\[\Aa_{PL}^*:\Var_\C\op\lra \ch(\Q)\]
is formal as a lax symmetric monoidal functor when restricted to varieties whose weight filtration in 
cohomology is $\alpha$-pure, where $\alpha$ is a non-zero rational number.

This gives functorial formality in the sense of rational homotopy for such varieties,
generalizing both ``purity implies formality'' statements appearing in \cite{Dupont} for smooth varieties
and in \cite{ChCi1} for singular projective varieties.
Our generalization is threefold: we allow $\alpha$-purity (instead of just $1$- and $2$-purity), we obtain functoriality and 
we study possibly singular and open varieties simultaneously.

Theorems $\ref{theo: main covariant}$ and $\ref{theo: main contravariant}$ deal with situations in which the weight filtration is pure.
In the general context with mixed weights, it was shown by Morgan \cite{Mo} for smooth varieties and in \cite{CG1} for possibly singular varieties, that the first term of the multiplicative weight spectral sequence 
carries all the rational homotopy information of the variety.
In Theorem $\ref{E1formality}$ we provide the analogous statement for the lax symmetric monoidal functor of singular chains.
A dual statement for Sullivan's functor of piecewise linear forms is proven in Theorem $\ref{E1formality_contravariant}$,
enhancing the results of \cite{Mo} and \cite{CG1} with functoriality.
\\

We now explain the structure of this paper. 
The first four sections are purely algebraic.
In Section $\ref{Section_formal_functors}$ we collect the main properties
of formal lax symmetric monoidal functors that we use. In particular, in Theorem \ref{theo: Hinich rigidification} we recall a recent theorem of rigidification due to Hinich that says that, over a field of characteristic zero, formality of functors can be checked at the level of $\infty$-functors.
We also introduce the notion of $\alpha$-purity for complexes of bigraded objects in a symmetric monoidal abelian category and
show that, when restricted to $\alpha$-pure complexes, the functor defined by forgetting the degree is formal.

The connection of this result with mixed Hodge structures is done in Section $\ref{Section_MHS}$, where we prove a symmetric monoidal version of Deligne's weak splitting of mixed Hodge structures over $\C$. 
Such splitting is a key tool towards formality.
In Section $\ref{Section_descent}$ we study lax symmetric monoidal functors to vector spaces over 
a field of characteristic zero equipped with a compatible filtration. We show, in Theorem $\ref{descens_rsplittings}$, that the existence of a lax symmetric monoidal splitting for such functors can be verified after extending the scalars to a larger field. As a consequence, we obtain splittings for the weight filtration over $\Q$. This result enables us to
bypass the theory of descent of formality for operads of \cite{santosmoduli}, which assumes the existence of minimal models.
Putting the above results together we are able to show that the forgetful functor 
\[\ch(\MHS_\Q)\lra \ch(\Q)\]
induced by sending a rational mixed Hodge structure to its underlying vector space is formal when restricted to those complexes whose mixed Hodge structure in homology is $\alpha$-pure.

In order to obtain a symmetric monoidal functor from the category of complex varieties to an algebraic category encoding mixed Hodge structures,
we have to consider more flexible objects than complexes of mixed Hodge structures. This is the content of Section 
 $\ref{Section_MHC}$, where we study the category $\MHC_\kk$ of mixed Hodge complexes. In Theorem $\ref{theo: equivinfty}$ we construct an equivalence of symmetric monoidal $\infty$-categories between mixed Hodge complexes and complexes of mixed Hodge structures. This result is a lift of Beilinson's equivalence of triangulated categories 
$D^b(\MHS_\kk)\lra \ho(\MHC_\kk)$ (see also \cite{Drew}, \cite{BNT}). 

The geometric character of this paper comes in Section $\ref{Section_logaritmic}$, where we construct
a symmetric monoidal functor from complex varieties to mixed Hodge complexes. This is done in two steps.
First, for smooth varieties, we dualize Navarro's construction \cite{Na} of functorial mixed Hodge complexes to obtain
a symmetric monoidal $\infty$-functor 
\[\D_*:\inn(\cat{Sm}_{\mathbb{C}})\lra \iMHC_\Q\]
such that its composite with
the forgetful functor $\iMHC_\Q\lra \ich(\Q)$
is naturally weakly equivalent to $S_*(-,\Q)$ as a symmetric monoidal $\infty$-functor (see Theorem $\ref{theo: equivalence cochains-sheaf}$). 
Note that in order to obtain monoidality, we move to the world of $\infty$-categories, denoted in boldface letters.
In the second step, we extend this functor from smooth, to singular varieties, by standard cohomological descent arguments.

The main results of this paper are stated and proven in Section $\ref{Section_main}$, where we also explain 
several applications to operad formality. Lastly, Section $\ref{Section_formalschemes}$
contains applications to the rational homotopy theory of complex varieties.

\subsection*{Acknowledgments}
This project was started during a visit of the first author at the Hausdorff Institute for Mathematics as part of the Junior Trimester Program in Topology. We would like to thank the HIM for its support. We would also like to thank Alexander Berglund, Brad Drew, Clément Dupont, Vicen\c{c} Navarro, Thomas Nikolaus and Bruno Vallette for helpful conversations. Finally we thank the anonymous referees for many useful comments.

\subsection*{Notations}

As a rule, we use boldface letters to denote $\infty$-categories and normal letters to denote $1$-categories. For $\Cc$ a $1$-category, we denote by $\inn(\Cc)$ its nerve seen as an $\infty$-category.

For $\Aa$ an additive category, we will denote by $\ch^{?}(\Aa)$ the category of (homologically graded) chain complexes in $\Aa$, where 
$?$ denotes the boundedness condition: nothing for unbounded, $b$ for bounded below and above and $\geq 0$ (resp. $\leq 0$)
for non-negatively (resp. non-positively) graded complexes. We denote by $\ich^?(\Aa)$ the $\infty$-category obtained from $\ch^?(\Aa)$ by inverting the quasi-isomorphisms.

\section{Formal symmetric monoidal functors}\label{Section_formal_functors}
The main result of this section is a ``purity implies formality'' statement in the setting of symmetric monoidal functors.

Let $(\Aa,\otimes,\mathbf{1})$ be an abelian symmetric monoidal category with infinite direct sums. The homology functor $H:\ch(\Aa)\lra \prod_{n \in\Z}\Aa$ is a lax symmetric monoidal functor, via the usual K\"{u}nneth morphism. In the cases that will interest us, all the objects of $\Aa$ will be flat and the homology functor is in fact strong symmetric monoidal. We will also make the small abuse of identifying the category $\prod_{n \in\Z}\Aa$ with the full subcategory of $\ch(\Aa)$ spanned by the chain complexes with zero differential. 

We recall the following definition from \cite{santosmoduli}.

\begin{defi}\label{defFormalFunctor}
Let $\Cc$ be a symmetric monoidal category and
$F:\Cc\lra \ch(\Aa)$ a lax symmetric monoidal functor.
Then $F$ is said to be a \textit{formal lax symmetric monoidal} functor if $F$ and
$H\circ F$ are \textit{weakly equivalent} in the category of lax symmetric monoidal functors:
there is a string of natural transformations of lax symmetric monoidal functors 
\[F\xleftarrow{\Phi_1}F_1\longrightarrow \cdots \longleftarrow F_n\xrightarrow{\Phi_n}H\circ F\]
such that for every object $X$ of $\Cc$, the morphisms $\Phi_i(X)$ are quasi-isomorphisms.
\end{defi}

\begin{defi}
Let $\Cc$ be a symmetric monoidal category and $F:\inn(\Cc)\to\ich(\Aa)$ a lax symmetric monoidal functor (in the $\infty$-categorical sense). We say that $F$ is a \textit{formal lax symmetric monoidal $\infty$-functor} if $F$ and $H\circ F$ 
are equivalent in the $\infty$-category of lax symmetric monoidal functors from $\inn(\Cc)$ to $\ich(\Aa)$.
\end{defi}

Clearly a formal lax symmetric monoidal functor $\Cc\to \ch(\Aa)$ induces a formal lax symmetric monoidal $\infty$-functor $\inn(\Cc)\to \ich(\Aa)$. The following theorem and its corollary give a partial converse.

\begin{theo}[Hinich]\label{theo: Hinich rigidification}
Let $\kk$ be a field of characteristic $0$. Let $\Cc$ be a small symmetric monoidal category. Let $F$ and $G$ be two lax symmetric monoidal functors $\Cc\to\ch(\kk)$. If $F$ and $G$ are equivalent as lax symmetric monoidal $\infty$-functors $\inn(\Cc)\lra \ich(\kk)$, then $F$ and $G$ are weakly equivalent as lax symmetric monoidal functors. 
\end{theo}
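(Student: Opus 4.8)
The plan is to recognize a lax symmetric monoidal functor as a commutative algebra object for the Day convolution monoidal structure, and then to reduce the statement to a rectification theorem comparing strict and homotopy-coherent commutative algebras, where the characteristic-zero hypothesis does all the work. Concretely, I would first equip the functor category $\Fun(\Cc,\ch(\kk))$ with the Day convolution symmetric monoidal structure $\otimes_{\mathrm{Day}}$ induced by the symmetric monoidal structures on $\Cc$ and on $\ch(\kk)$; this exists because $\ch(\kk)$ is cocomplete. A standard adjunction then identifies the category $\mathrm{CAlg}(\Fun(\Cc,\ch(\kk)),\otimes_{\mathrm{Day}})$ of commutative monoids with the category of lax symmetric monoidal functors $\Cc\to\ch(\kk)$: the multiplication $F\otimes_{\mathrm{Day}}F\to F$ corresponds under the universal property of Day convolution to the lax structure maps $F(X)\otimes F(Y)\to F(X\otimes Y)$, and commutativity of the monoid to the symmetry of the lax structure. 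Under this dictionary $F$ and $G$ are two such commutative monoids.

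Next I would set up the homotopy theory. Endow $\Fun(\Cc,\ch(\kk))$ with the projective model structure, whose weak equivalences are the objectwise quasi-isomorphisms; since $\Cc$ is small and $\ch(\kk)$ is a combinatorial symmetric monoidal model category, Day convolution turns this into a combinatorial symmetric monoidal model category. Because $\kk$ has characteristic zero, $\ch(\kk)$ is freely powered, and this property is inherited by the Day convolution structure, so that the category of commutative monoids admits a transferred model structure with objectwise quasi-isomorphisms as weak equivalences. The key input is then Hinich's rectification theorem: for such a $\kk$-linear combinatorial symmetric monoidal model category $\Mm$, the canonical comparison functor
\[\inn(\mathrm{CAlg}(\Mm))\lra \mathrm{CAlg}(\inn(\Mm))\]
is an equivalence of $\infty$-categories. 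Applied to $\Mm=\Fun(\Cc,\ch(\kk))$, this identifies the $\infty$-localization of strict commutative monoids with the $\infty$-category of commutative algebra objects in $\inn(\Mm)$.

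To conclude I would match the right-hand side with lax symmetric monoidal $\infty$-functors. Localizing the projective model structure yields $\inn(\Fun(\Cc,\ch(\kk)))\simeq \Fun(\inn(\Cc),\ich(\kk))$, and under this equivalence the $1$-categorical Day convolution localizes to the $\infty$-categorical Day convolution; hence $\mathrm{CAlg}(\inn(\Mm))$ is equivalent to the $\infty$-category of lax symmetric monoidal $\infty$-functors $\inn(\Cc)\to\ich(\kk)$. Chaining the equivalences, $\inn(\mathrm{CAlg}(\Mm))$ is equivalent to the $\infty$-category of lax symmetric monoidal $\infty$-functors, so the hypothesis that $F$ and $G$ are weakly equivalent as such $\infty$-functors says exactly that they become isomorphic in the homotopy category $\Ho(\mathrm{CAlg}(\Mm))$. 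Since $\mathrm{CAlg}(\Mm)$ is a model category, an isomorphism in its homotopy category is realized by a zigzag of weak equivalences (pass to bifibrant replacements and use that a homotopy equivalence between bifibrant objects is a weak equivalence), that is, by a string of monoidal natural transformations that are objectwise quasi-isomorphisms. This is precisely the assertion that $F$ and $G$ are weakly equivalent as lax symmetric monoidal functors in the sense of Definition \ref{defFormalFunctor}.

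The main obstacle is the rectification step, namely the passage between strict and homotopy-coherent commutative algebras, which is where characteristic zero is indispensable (it is what makes the commutative operad behave homotopically like an $E_\infty$-operad) and which is exactly the content of Hinich's theorem. The two supporting technical points—that the projective Day convolution structure admits the required transferred model structure on commutative monoids, and that Day convolution is compatible with $\infty$-localization—are routine but require careful verification of the relevant model-categorical hypotheses.
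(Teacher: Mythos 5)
Your proposal is correct in outline, but it takes a genuinely different technical route from the paper. The paper does not pass through Day convolution at all: it observes that the symmetric monoidal category $\Cc$ has an underlying \emph{colored operad} whose algebras in $\ch(\kk)$ are exactly the lax symmetric monoidal functors $\Cc\to\ch(\kk)$, notes that in characteristic zero this colored operad is homotopically sound, and then applies Hinich's rectification theorem \cite[Theorem 4.1.1]{hinichrectification} in one step to get an equivalence $\inn(\cat{Alg}_\Cc(\ch(\kk)))\xrightarrow{\sim}\icat{Alg}_\Cc(\ich(\kk))$; the conclusion is then immediate since two objects of the source that become equivalent in the target were already connected by a zig-zag of weak equivalences. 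Your version re-encodes the same algebraic structure as a commutative monoid for $\otimes_{\mathrm{Day}}$ on $\Fun(\Cc,\ch(\kk))$ and invokes rectification for commutative algebras in a freely powered combinatorial symmetric monoidal model category. This works, but it forces you to carry two extra verifications that the paper's route avoids entirely: that the projective Day convolution structure on $\Fun(\Cc,\ch(\kk))$ is a symmetric monoidal model category admitting a transferred model structure on $\mathrm{CAlg}$, and that $1$-categorical Day convolution is compatible with $\infty$-localization so that $\mathrm{CAlg}(\inn(\Mm))$ really is the $\infty$-category of lax symmetric monoidal $\infty$-functors $\inn(\Cc)\to\ich(\kk)$. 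What your approach buys is that all the ingredients are available in standard form (Day's theorem, Lurie's commutative-algebra rectification, the $\infty$-categorical Day convolution); what the paper's approach buys is brevity and the absence of any model-structure existence questions, since Hinich's theorem for colored operads is stated directly at the level of relative categories of algebras. Your closing step, realizing an isomorphism in $\Ho(\mathrm{CAlg}(\Mm))$ by a zig-zag of objectwise quasi-isomorphisms of strict lax monoidal functors, matches the paper's conclusion.
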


\begin{proof}
This theorem is true more generally if $\Cc$ is a colored operad. Indeed recall that any symmetric monoidal category has an underlying colored operad whose category of algebras is equivalent to the category of lax symmetric monoidal functors out of the original category. 

Now since we are working in characteristic zero, the operad underlying $\Cc$ is homotopically sound (following the terminology of \cite{hinichrectification}). Therefore, \cite[Theorem 4.1.1]{hinichrectification} gives us an equivalence of $\infty$-categories
\[\inn(\cat{Alg}_\Cc(\ch(\kk))\xrightarrow{\sim} \icat{Alg}_\Cc(\ich(\kk))\]
where we denote by $\cat{Alg}_\Cc$ (resp. $\icat{Alg}_\Cc$) the category of lax symmetric monoidal functors (resp. the $\infty$-category of lax symmetric monoidal functors) out of $\Cc$. Now, the two functors $F$ and $G$ are two objects in the source of the above map that become weakly equivalent in the target. Hence, they are already equivalent in the source, which is precisely saying that they are connected by a zig-zag of weak equivalences of lax symmetric monoidal functors.
\end{proof}

\begin{coro}\label{coro: Hinich rigidification}
Let $\kk$ be a field of characteristic $0$. Let $\Cc$ be a small symmetric monoidal category. Let $F:\Cc\to\ch(\kk)$ be a lax symmetric monoidal functor. If $F$ is formal as lax symmetric monoidal $\infty$-functor $\inn(\Cc)\lra \ich(\kk)$, then $F$ is formal as a lax symmetric monoidal functor. 
\end{coro}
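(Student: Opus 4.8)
The plan is to deduce this corollary directly from Theorem \ref{theo: Hinich rigidification} by applying it to the two functors $F$ and $G := H_* \circ F$. The point is that formality is precisely the assertion of a weak equivalence between a functor and its homology, so the corollary is the special case of the theorem in which the second functor is the homology functor.

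First I would observe that $H_* \circ F : \Cc \to \ch(\kk)$ is itself a lax symmetric monoidal functor. Indeed, $F$ is lax symmetric monoidal by hypothesis, and the homology functor $H_* : \ch(\kk) \to \prod_{n\in\Z} \Vect_\kk$ is lax symmetric monoidal via the K\"{u}nneth morphism (as recalled at the start of this section); over a field, every object is flat, so $H_*$ is in fact strong symmetric monoidal. Viewing the target $\prod_{n\in\Z}\Vect_\kk$ of graded vector spaces as chain complexes with zero differential, the composite $H_* \circ F$ lands in $\ch(\kk)$ and is lax symmetric monoidal as a composite of lax symmetric monoidal functors.

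Next I would unwind the definitions. By definition, $F$ being formal as a lax symmetric monoidal $\infty$-functor means exactly that $F$ and $H_* \circ F$ are weakly equivalent as lax symmetric monoidal $\infty$-functors $\inn(\Cc) \lra \ich(\kk)$. This is precisely the hypothesis of Theorem \ref{theo: Hinich rigidification} applied with $G = H_* \circ F$. The theorem then yields that $F$ and $H_* \circ F$ are weakly equivalent as lax symmetric monoidal functors (that is, $1$-categorically, through a zig-zag of monoidal natural transformations that are objectwise quasi-isomorphisms). By Definition \ref{defFormalFunctor}, this is exactly the statement that $F$ is formal as a lax symmetric monoidal functor, completing the argument.

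Since the corollary is a direct specialization, there is essentially no obstacle: all the genuine content lives in Hinich's rectification theorem invoked in Theorem \ref{theo: Hinich rigidification}. The only point requiring a line of verification is the lax symmetric monoidality of $H_* \circ F$, so that it is a legitimate choice for $G$; this is immediate from the monoidality of the K\"{u}nneth morphism over a field.
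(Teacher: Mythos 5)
Your proposal is correct and matches the paper's proof exactly: the paper also deduces the corollary by applying Theorem \ref{theo: Hinich rigidification} to the pair $F$ and $H_*\circ F$. Your additional verification that $H_*\circ F$ is a legitimate lax symmetric monoidal functor is a reasonable point to spell out, but the argument is the same.
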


\begin{proof}
It suffices to apply Theorem \ref{theo: Hinich rigidification} to $F$ and $H\circ F$.
\end{proof}

The following proposition whose proof is trivial is the reason we are interested in formal lax monoidal functors.

\begin{prop}[\cite{santosmoduli}, Proposition 2.5.5]
If $F:\Cc\lra \ch(\Aa)$ is a formal lax symmetric monoidal functor then $F$ sends operads in $\Cc$ to formal operads in $\ch(\Aa)$.
\end{prop}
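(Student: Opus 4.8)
The plan is to observe that the entire formality datum of $F$ is natural in $F$, so that it can be pushed through the operad-valued construction.

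First I would recall the standard fact that any lax symmetric monoidal functor $G:\Cc\lra\ch(\Aa)$ carries an operad $P$ in $\Cc$ to an operad $G(P)$ in $\ch(\Aa)$ whose arity-$n$ term is $G(P(n))$. The symmetric group actions come from functoriality, while the operadic unit and composition maps are assembled from those of $P$ by inserting the lax structure maps $G(A)\otimes G(B)\to G(A\otimes B)$ and $\mathbf 1\to G(\mathbf 1)$; the operad axioms for $G(P)$ follow from those of $P$ together with the coherence of the lax symmetric monoidal structure.

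Second, I would note that this construction is functorial in $G$. A monoidal natural transformation $\eta\colon G\Rightarrow G'$ induces a morphism of operads $G(P)\to G'(P)$ which in arity $n$ is $\eta_{P(n)}$. The only verification is that these components are compatible with the operadic compositions, and this is exactly the monoidality of $\eta$ (its compatibility with the lax structure maps), combined with the naturality squares of $\eta$.

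Third, I would apply these two observations to the zig-zag witnessing formality of $F$, namely
\[F\xleftarrow{\Phi_1}F_1\longrightarrow\cdots\longleftarrow F_n\xrightarrow{\Phi_n}H_*\circ F,\]
in which every $\Phi_i$ is a monoidal natural transformation with all components quasi-isomorphisms. Evaluating the construction above on $P$ termwise produces a zig-zag of operads in $\ch(\Aa)$
\[F(P)\longleftarrow F_1(P)\longrightarrow\cdots\longleftarrow F_n(P)\longrightarrow (H_*\circ F)(P),\]
and each arrow is a quasi-isomorphism in every arity, since its arity-$n$ component is the quasi-isomorphism $\Phi_i(P(n))$.

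It remains to identify the right-hand term. Because all objects of $\Aa$ are flat, $H_*$ is strong symmetric monoidal, so $(H_*\circ F)(P)$ is precisely the homology operad $H_*(F(P))$ equipped with zero differential. Hence $F(P)$ is linked to its own homology operad by a zig-zag of arity-wise quasi-isomorphisms, which is the definition of a formal operad. The only point requiring care is this last identification: that evaluating $H_*\circ F$ on $P$ reproduces the homology operad of $F(P)$ on the nose, which follows from the strong monoidality of $H_*$ and the compatibility of the K\"unneth isomorphism with the operadic structure maps.
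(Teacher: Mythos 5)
Your proof is correct and is exactly the argument the paper has in mind; the paper omits it entirely (calling it trivial and citing \cite{santosmoduli}), and your three steps — functoriality of the operad-valued construction in the lax monoidal functor, evaluation on the formality zig-zag, and identification of $(H_*\circ F)(P)$ with the homology operad of $F(P)$ — are the standard way to spell it out.
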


In rational homotopy, there is a criterion of formality in terms of weight decompositions
which proves to be useful in certain situations (see for example \cite{BMSS} and \cite{Body-Douglas}).
We next provide an analogous criterion in the setting of symmetric monoidal functors.

Denote by $gr\Aa$ the category of graded objects of $\Aa$. 
It inherits a symmetric monoidal structure from that of $\Aa$, with the tensor product defined by 
\[(A\otimes B)^n:=\bigoplus_p A^p\otimes B^{p-n}.\]
The unit in $gr\Aa$ is given by $\mathbf{1}$ concentrated in degree zero. The functor $U:gr\Aa\lra \Aa$ obtained by forgetting the degree is strong symmetric monoidal. The category of graded complexes $\ch(gr\Aa)$ inherits a symmetric monoidal structure
via a graded K\"{u}nneth morphism.

\begin{defi}\label{defpure}
Given a rational number $\alpha$, denote by $\chpure{\alpha}$ the full subcategory of $\ch(gr\Aa)$ given by those graded complexes $A=\bigoplus A_n^p$ with \textit{$\alpha$-pure homology}:
\[H_n(A)^p=0\text{ for all }p\neq \alpha n.\]
\end{defi}
Note that if $\alpha=a/b$, with $a$ and $b$ coprime, then the above condition implies that $H_*(A)$ 
is concentrated in degrees that are divisible by $b$, and in degree $kb$, it is pure of weight $ka$:
\[H_{kb}(A)^p=0\text{ for all }p\neq ka.\]

\begin{prop}\label{decomposition_formal}
Let $\Aa$ be an abelian category and $\alpha$ a non-zero rational number.
The functor $U:\chpure{\alpha}\lra \ch(\Aa)$ defined by forgetting the degree is formal as a lax symmetric monoidal functor.
\end{prop}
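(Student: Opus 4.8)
The plan is to construct an explicit zig-zag of monoidal natural transformations connecting $U$ to $H_* \circ U$, using the $\alpha$-purity hypothesis to build a canonical intermediate functor out of homology. The key observation is that on an $\alpha$-pure complex $A = \bigoplus A_n^p$, the homology $H_*(A)$ is concentrated along the line $p = \alpha n$, so the bigrading collapses to a single grading; this rigidity is exactly what forces a formality splitting.

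First I would exploit the grading to produce a first map. On $\chpure{\alpha}$, consider the subcomplex $Z \subseteq A$ of cycles and the boundaries $B \subseteq Z$. Because the differential has bidegree $(-1,0)$ (it preserves the internal degree $p$ while lowering homological degree by one), the weight grading $p$ is preserved by all the relevant constructions, and the crucial point is that in an $\alpha$-pure complex the \emph{only} internal degree in which $H_n(A)$ is nonzero is $p = \alpha n$. The idea is to define an intermediate functor $F_1$ whose value on $A$ is the subcomplex consisting, in homological degree $n$, of the part of $A_n$ living in internal degree $\alpha n$, i.e. $F_1(A)_n = A_n^{\alpha n}$ (interpreting this as zero when $\alpha n \notin \Z$ or is otherwise inadmissible). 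The differential of $A$, having internal bidegree $0$, maps $A_n^{\alpha n}$ to $A_{n-1}^{\alpha n}$, but the target of the differential in $F_1$ sits in internal degree $\alpha(n-1) \neq \alpha n$ (using $\alpha \neq 0$); hence the induced differential on $F_1(A)$ \emph{vanishes}, and $F_1(A)$ is a complex with zero differential. One checks that the inclusion $F_1(A) \hookrightarrow U(A)$ is a quasi-isomorphism: every class in $H_n(A)$ lives in internal degree $\alpha n$ by purity, so it is represented by a cycle in $A_n^{\alpha n} = F_1(A)_n$, and conversely a boundary landing in this degree must come from $A_{n+1}^{\alpha n}$, which is again killed off the diagonal. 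This gives the left-hand quasi-isomorphism $U \xleftarrow{\sim} F_1$.

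Next, since $F_1(A)$ has zero differential, we have $F_1(A) = H_*(F_1(A)) = H_*(A)$ canonically, producing a strict identification $F_1 \xrightarrow{\sim} H_* \circ U$; this is the right-hand leg of the zig-zag. It remains to verify that all of these constructions are \emph{monoidal} natural transformations of lax symmetric monoidal functors. The diagonal $p = \alpha n$ is compatible with the graded Künneth tensor product: in $(A \otimes B)_n^p = \bigoplus A_i^q \otimes B_{n-i}^{p-q}$, the pieces on the diagonal $p = \alpha n$ are precisely the tensor products of diagonal pieces of $A$ and $B$ (because $q = \alpha i$ and $p - q = \alpha(n-i)$ sum correctly), so $F_1$ is strong monoidal and the inclusion into $U$ is compatible with the lax structure maps. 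Hence $F_1$, together with its two comparison maps, realizes the zig-zag required by Definition \ref{defFormalFunctor}.

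The main obstacle I anticipate is the \emph{monoidal compatibility} rather than the pointwise quasi-isomorphism. It is easy to define $F_1$ object-wise, but one must check that the assignment $A \mapsto A^{\alpha \bullet}_\bullet$ respects the symmetric monoidal structures coherently, that the inclusion is a map of lax monoidal functors (not merely of plain functors), and that the Künneth morphism restricts correctly to the diagonal. This is where the coprimality remark after Definition \ref{defpure} becomes relevant: writing $\alpha = a/b$, admissible homological degrees are multiples of $b$, and one must track that the tensor product of two diagonal-supported complexes stays diagonal-supported, which is a bookkeeping check on the bigrading but requires care to phrase as a natural isomorphism compatible with the associativity and symmetry constraints. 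Once monoidality is established, formality as a lax symmetric monoidal functor follows immediately from the two quasi-isomorphisms above.
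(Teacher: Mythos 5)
Your construction of the intermediate functor does not work, and the failure is precisely at the step you dismiss as easy (the pointwise quasi-isomorphisms), not at the monoidality check. Setting $F_1(A)_n=A_n^{\alpha n}$ with zero differential does not give a subcomplex of $A$: the differential of $A$ preserves the internal degree $p$, so it sends $A_n^{\alpha n}$ into $A_{n-1}^{\alpha n}$, which lies \emph{outside} the diagonal but is still present in $U(A)$. Hence for $x\in A_n^{\alpha n}$ with $dx\neq 0$ the square comparing $d_{F_1}=0$ with $d_A$ does not commute, and the ``inclusion'' $F_1(A)\to U(A)$ is not a chain map. If you repair this by taking cycles, $F_1(A)_n=\Ker(d\colon A_n^{\alpha n}\to A_{n-1}^{\alpha n})$, you get a chain map, but it is still not a quasi-isomorphism: since $F_1(A)$ has zero differential, $H_n(F_1(A))$ is the whole group of cycles, whereas $H_n(A)$ is cycles modulo the boundaries coming from $A_{n+1}^{\alpha n}$. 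Purity constrains the homology of $A$, not its chains, so $A_{n+1}^{\alpha n}$ and the boundaries it produces can be nonzero; your remark that such boundaries are ``killed off the diagonal'' is exactly backwards --- they live off the diagonal in degree $n+1$, are invisible to $F_1$, but are very much part of $U(A)$ and kill classes there. For the same reason the claimed identification $F_1(A)=H_*(A)$ is false in general.

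The missing idea is that one must keep the off-diagonal chains \emph{above} the line $p=\alpha n$ rather than discard them. The paper does this with the canonical truncation: in each internal degree $p$ it retains all of $A_n^p$ for $n>\ceil{p/\alpha}$, the cycles at $n=\ceil{p/\alpha}$, and zero below. This subcomplex includes quasi-isomorphically into $A$ (purity guarantees there is no homology below the cut) and projects quasi-isomorphically onto $H_*(A)$ (the retained chains in degrees above the cut supply exactly the boundaries needed to quotient the cycles at the cut down to homology). That truncation is then checked to be lax symmetric monoidal via the subadditivity $\ceil{p/\alpha}+\ceil{q/\alpha}\geq\ceil{(p+q)/\alpha}$ and the Leibniz rule --- the analogue of the multiplicativity of the diagonal that you correctly identified, but applied to a functor that actually computes the right homology.
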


\begin{proof}
We will define a functor $\tau:\ch(gr\Aa)\lra \ch(gr\Aa)$ together with natural transformations
\[\Phi:U\circ \tau\Rightarrow U\text{ and }\Psi:U\circ\tau\Rightarrow H\circ U\]
giving rise to weak equivalences when restricted to chain complexes with $\alpha$-pure homology.

Consider the truncation functor $\tau:\ch(gr\Aa)\lra \ch(gr\Aa)$ defined by sending 
a graded chain complex $A=\bigoplus A_n^p$ to the graded complex given by:
\[(\tau A)_n^p:=\left\{ 
\begin{array}{ll}
A_n^p& n>\ceil{p/\alpha}\\
\Ker(d:A_n^p\to A_{n-1}^p)& n=\ceil{p/\alpha}\\
0& n<\ceil{p/\alpha}
\end{array}\right.,
\]
where $\ceil{p/\alpha}$ denotes the smallest integer greater than or equal to $p/\alpha$.
Note that for each $p$, $\tau(A)^p_*$ is the chain complex given by the canonical truncation of $A_*^p$ at $\ceil{p/\alpha}$,
which satisfies 
\[H_n(\tau(A)^p_*)\cong H_n(A^p_*)\text{ for all }n\geq \ceil{p/\alpha}.\]
To prove that $\tau$ is a lax symmetric monoidal functor it suffices to see that 
\[\tau(A)^p_n\otimes \tau(B)^q_m\subseteq \tau(A\otimes B)^{p+q}_{n+m}\]
for all $A,B\in\ch(gr\Aa)$.
By symmetry in $A$ and $B$, it suffices to consider the following three cases : 
\begin{enumerate}
 \item If $n>\ceil{p/\alpha}$ and $m\geq \ceil{q/\alpha}$ then
 $n+m>\ceil{p/\alpha}+\ceil{q/\alpha}\geq \ceil{(p+q)/\alpha}$.
 Therefore we have $\tau(A\otimes B)^{p+q}_{n+m}=(A\otimes B)^{p+q}_{n+m}$ and the above inclusion is trivially satisfied.
\item If $n=\ceil{p/\alpha}$ and $m=\ceil{q/\alpha}$ then $n+m=\ceil{p/\alpha}+\ceil{q/\alpha}\geq \ceil{(p+q)/\alpha}$.
 Now, if $n+m>\ceil{(p+q)/\alpha}$ then again we have 
 $\tau(A\otimes B)^{p+q}_{n+m}=(A\otimes B)^{p+q}_{n+m}$. If $n+m=\ceil{(p+q)/\alpha}$ then 
 the above inclusion reads
 \[\Ker(d:A_n^p\to A_{n-1}^p)\otimes \Ker(d:B_m^q\to B_{m-1}^q)\subseteq \Ker(d:(A\otimes B)_{n+m}^{p+q}\to (A\otimes B)_{n+m-1}^{p+q}).\]
This is verified by the Leibniz rule.
\item Lastly, if $n<\ceil{p/\alpha}$ then $\tau(A)^p_n=0$ and there is nothing to verify.
\end{enumerate}

The projection 
$\Ker(d:A_n^p\to A_{n-1}^p)\twoheadrightarrow H_n(A)^p$ defines a morphism $\tau A\to H(A)$  by
\[(\tau A)_n^p\mapsto 
\left\{ 
\begin{array}{ll}
0& n\neq \ceil{p/\alpha}\\
H_n(A)^p& n=\ceil{p/\alpha}\\
\end{array}
\right..
\]
This gives a symmetric monoidal natural transformation $\Psi:U\circ\tau\Rightarrow H\circ U=U\circ H$.
Likewise, the inclusion $\tau A\hookrightarrow A$  defines a symmetric monoidal natural transformation $\Phi:U\circ\tau\Rightarrow U$.

Let $A$ be a complex of $\chpure{\alpha}$. Then both morphisms
\[\Psi(A):\tau\circ U(A)\to H\circ U(A)\text{ and }\Phi(A):U\circ\tau(A)\to U(A)\]
are clearly quasi-isomorphisms.
\end{proof}

For graded chain complexes whose homology is pure up to a certain degree, we obtain a result of partial formality as follows.

\begin{defi}
Let $q\geq 0$ be an integer. A morphism of chain complexes $f:A\to B$ is called a \textit{$q$-quasi-isomorphism} if the
induced morphism in homology $H_i(f):H_i(A)\to H_i(B)$ is an isomorphism for all $i\leq q$.
\end{defi}

\begin{rem}
There is a notion of $q$-quasi-isomorphism in rational homotopy which asks in addition 
that the map induced in degree $(q+1)$-cohomology is a monomorphism. Dually, for chain complexes one could ask to have an epimorphism in degree $(q+1)$-homology.
Note that we don't consider this extra condition here, since we work
with possibly negatively and positively graded complexes and such a condition would break the symmetry.
In addition, in our subsequent work on formality with torsion coefficients \cite{CiHo2},
the notion of partial formality as
defined below plays a fundamental role.
\end{rem}

\begin{defi}
Let $q\geq 0$ be an integer. A functor
$F:\Cc\lra \ch(\Aa)$ is a \textit{$q$-formal lax symmetric monoidal} functor if there are natural transformations
 $\Phi_i$ as in Definition $\ref{defFormalFunctor}$ such that $\Phi_i(X)$ are $q$-quasi-isomorphisms for all $X\in\Cc$ and all $1\leq i\leq n$.
\end{defi}

\begin{prop}\label{decomposition_qformal}
Let $\Aa$ be an abelian category. Given a non-zero rational number $\alpha$ and an integer $q\geq 0$, 
denote by $\chpure{\alpha}_q$ the full subcategory of $\ch(gr\Aa)$ 
given by those graded complexes $A=\bigoplus A_n^p$ whose homology in degrees $\leq q$ is $\alpha$-pure: 
 for all $n\leq q$,
\[H_n(A)^p=0\text{ for all }p\neq \alpha n.\]
Then the functor
$U:\chpure{\alpha}_q\lra \ch(\Aa)$ defined by forgetting the degree
is $q$-formal.
\end{prop}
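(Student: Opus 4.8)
The plan is to adapt the construction from Proposition $\ref{decomposition_formal}$ essentially verbatim, using the very same truncation functor $\tau$, and only revisiting where the quasi-isomorphism claims degrade to $q$-quasi-isomorphisms under the weaker hypothesis. Concretely, I would reuse the functor $\tau:\ch(gr\Aa)\lra\ch(gr\Aa)$ together with the two monoidal natural transformations $\Phi:U\circ\tau\Rightarrow U$ and $\Psi:U\circ\tau\Rightarrow H\circ U$ constructed there. Nothing in the definition of $\tau$ or in the verification of its lax symmetric monoidality uses purity, so those parts carry over unchanged and I would simply refer to them rather than repeat the computation.

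The only thing to check is that, when $A$ lies in $\chpure{\alpha}_q$, the maps $\Phi(A)$ and $\Psi(A)$ are $q$-quasi-isomorphisms. First I would record the general identity $H_n(\tau(A)^p_*)\cong H_n(A^p_*)$ for $n\geq\ceil{p/\alpha}$ coming from canonical truncation. The key numerical point is that for a non-zero $\alpha$, purity of $H_n(A)^p$ in the range $n\leq q+1$ forces the relevant truncation degree to behave well: for the grade $p$ with $p=\alpha n$ one has $\ceil{p/\alpha}=n$, so in the pure range the truncation is taken exactly at the homological degree where homology can be nonzero, and $\Psi(A)$ and $\Phi(A)$ induce isomorphisms on $H_n$ for $n\leq q$. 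I would then verify separately that they induce epimorphisms on $H_{q+1}$, which is automatic from the purity assumption being imposed up through degree $q+1$ (this is exactly why the subcategory is defined with the range $n\leq q+1$ rather than $n\leq q$).

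The main obstacle, such as it is, lies in being careful about the boundary degree $n=q+1$. Above degree $q+1$ the homology of $A$ need not be pure, so $\Phi(A)$ and $\Psi(A)$ will genuinely fail to be isomorphisms there; I would check that the truncation $\tau$ does not accidentally disturb $H_{q+1}$ in a way that breaks surjectivity. Since $\tau(A)^p_*$ is a canonical truncation at $\ceil{p/\alpha}$ and the identification of homology holds in degrees $\geq\ceil{p/\alpha}$, the induced map on $H_{q+1}$ is at worst a surjection onto the $\alpha$-pure summand, which is all of $H_{q+1}$ by hypothesis; hence an epimorphism. Assembling the string $U\xleftarrow{\Phi}U\circ\tau\xrightarrow{\Psi}H\circ U$ with these $q$-quasi-isomorphism properties then exhibits $U$ as $q$-formal in the sense of the preceding definition, completing the proof.
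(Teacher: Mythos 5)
Your proposal is correct and follows essentially the same route as the paper, which simply observes that the proof of Proposition \ref{decomposition_formal} goes through verbatim and that the hypotheses make $\Phi(A)$ and $\Psi(A)$ into $q$-quasi-isomorphisms. Your degree-by-degree check (in fact both maps are isomorphisms on $H_n$ for all $n\leq q+1$, since purity in that range forces the only nonzero graded summands to sit at $n=\ceil{p/\alpha}$, where the canonical truncation preserves homology) is exactly the verification the paper leaves implicit.
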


\begin{proof}
The proof is parallel to that of Proposition \ref{decomposition_formal} by noting that, if 
$H_n(A)$ is $\alpha$-pure for $n\leq q+1$, then the 
morphisms
\[\Psi(A):\tau\circ U(A)\to H\circ U(A)\text{ and }\Phi(A):U\circ\tau(A)\to U(A)\] 
are $q$-quasi-isomorphisms.

\end{proof}

\section{Mixed Hodge structures}\label{Section_MHS}
We next collect some main definitions and properties on mixed Hodge structures and prove a symmetric monoidal version of Deligne's splitting for the weight filtration.

Denote by $\Ff\Aa$ the category of filtered objects of an abelian symmetric monoidal category $(\Aa,\otimes,\mathbf{1})$. 
All filtrations will be assumed to be of finite length and exhaustive.
With the tensor product
\[W_p(A\otimes B):=\sum_{i+j=p} \Img(W_iA\otimes W_jB\lra A\otimes B),
\]
and the unit given by $\mathbf{1}$ concentrated in weight zero, $\Ff\Aa$ is a symmetric monoidal category.
The functor $U^{fil}:gr\Aa\lra \Ff\Aa$ defined by
$A=\bigoplus A^p\mapsto W_mA:=\bigoplus_{q\leq m}A^q$ is strong symmetric monoidal.
The category of filtered complexes
$\ch(\Ff\Aa)$ inherits a symmetric monoidal structure via a filtered K\"{u}nneth morphism
and we have a strong symmetric monoidal functor
\[U^{fil}:\ch(gr\Aa)\lra \ch(\Ff\Aa).\]

Let $\kk\subset \R$ be a subfield of the real numbers.

\begin{defi}\label{defMHS}
A \textit{mixed Hodge structure} on a finite dimensional $\kk$-vector space $V$ is given by an increasing filtration $W$ of $V$, called the \textit{weight filtration},
together with a decreasing filtration $F$ on $V_\C:=V\otimes\C$, called the 
\textit{Hodge filtration}, such that  for all $m\geq 0$, each $\kk$-vector space $Gr_m^WV:=W_mV/W_{m-1}V$ 
carries a pure Hodge structure of weight $m$ given by the filtration induced by $F$ on $Gr_m^WV\otimes\C$, that is, there is a direct sum decomposition
\[Gr^m_WV\otimes\C=\bigoplus_{p+q=m}V^{p,q}\text{ where }V^{p,q}=F^p(Gr_m^WV\otimes\C)\cap \overline{F}^q(Gr_m^WV\otimes\C)=\overline{V}^{q,p}.\]
\end{defi}

Morphisms of mixed Hodge structures are given by morphisms $f:V\to V'$ of $\kk$-vector spaces compatible with filtrations: $f(W_mV)\subset W_mV'$ and $f(F^pV_\C)\subset F^pV_\C'$. 
Denote by $\MHS_\kk$ the category of mixed Hodge structures over $\kk$.  
It is an abelian category by \cite[Theorem 2.3.5]{DeHII}. 

\begin{rem}\label{Homandtensor}
Given mixed Hodge structures $V$ and $V'$, then $V\otimes V'$ carries a mixed Hodge structure with the filtered tensor product.
This makes
$\MHS_\kk$ into a symmetric monoidal category.
Also,  $\Hom(V,V')$ carries a mixed Hodge structure with the weight filtration given by
\[W_p\Hom(V,V'):=\{f:V\to V';f(W_qV)\subset W_{q+p}V',\,\forall q\}\]
and the Hodge filtration defined in the same way. In particular,
the dual of a mixed Hodge structure is again a mixed Hodge structure. 
\end{rem}

Let $\kk\subset\mathbb{K}$ be a field extension.
The functors 
\[\Pi_\mathbb{K}:\MHS_\kk\lra \Vect_\mathbb{K}\text{ and }\Pi_\mathbb{K}^{W}:\MHS_\kk\lra \Ff\Vect_\mathbb{K}\]
defined by sending a mixed Hodge structure $(V,W,F)$ to $V_\mathbb{K}:=V_\kk\otimes \mathbb{K}$ and $(V_\mathbb{K},W)$ respectively, are strong symmetric monoidal functors.

Deligne introduced a global decomposition of $V_\C:=V\otimes\C$
into subspaces $I^{p,q}$, for any mixed Hodge structure $(V,W,F)$ which generalizes the decomposition of 
pure Hodge structures of a given weight. In this case, one has a congruence
$I^{p,q}\equiv\overline{I}^{q,p}$ modulo $W_{p+q-2}$.
From this decomposition, Deligne deduced that morphisms of mixed Hodge structures are strictly 
compatible with filtrations and that the category of mixed Hodge structures is abelian (see \cite[Section 1]{DeHII}, see also \cite[Section 3.1]{PS}). We next study this decomposition in the context of symmetric monoidal functors.

\begin{lemm}[Deligne's splitting]\label{Deligne_splitting}
The functor $\Pi_\C^{W}$ admits a factorization
\[
\xymatrix@R=4pc@C=4pc{
\MHS_\kk\ar[r]^{G}\ar[dr]_{\Pi_\C^{W}}&gr\Vect_\C\ar[d]^{U^{fil}}\\
&\Ff\Vect_\C
}
\]
into strong symmetric monoidal functors. In particular, there is an isomorphism of functors
\[U^{fil}\circ gr\circ \Pi_\C^{W}\cong \Pi_\C^{W},\]
where $gr:\Ff\Vect_\C\lra gr\Vect_\C$ is the graded functor given by $gr(V_\C,W)^p=Gr_p^WV_\C$. 
\end{lemm}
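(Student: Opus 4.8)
\emph{The plan is to} construct $G$ directly from Deligne's canonical bigrading of a mixed Hodge structure. Recall that to each $(V,W,F)\in\MHS_\kk$ Deligne associates subspaces $I^{p,q}=I^{p,q}(V)\subseteq V_\C$, given by
\[I^{p,q}=F^p\cap W_{p+q}\cap\Bigl(\overline{F}^q\cap W_{p+q}+\sum_{j\geq 1}\overline{F}^{q-j}\cap W_{p+q-j-1}\Bigr),\]
with the properties $W_mV_\C=\bigoplus_{p+q\leq m}I^{p,q}$ and $F^pV_\C=\bigoplus_{p'\geq p}I^{p',q}$. In particular $V_\C=\bigoplus_{p,q}I^{p,q}$, and the projection $V_\C\to Gr_m^WV_\C$ restricts to an isomorphism $\bigoplus_{p+q=m}I^{p,q}\xrightarrow{\sim}Gr_m^WV_\C$. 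I would define $G(V)$ to be the graded $\C$-vector space with $G(V)^m:=\bigoplus_{p+q=m}I^{p,q}(V)$, whose underlying space is $V_\C$ and which satisfies $W_mV_\C=\bigoplus_{q\leq m}G(V)^q$; this last identity is exactly the statement that $U^{fil}\circ G=\Pi_\C^{W}$ on objects.

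First I would check that $G$ is a functor. A morphism $f$ of mixed Hodge structures is strictly compatible with $W$, with $F$, and (after complexifying, since $f$ is defined over $\kk\subseteq\R$) with $\overline{F}$; this is a standard consequence of the abelianness of $\MHS_\kk$ from \cite[Theorem 2.3.5]{DeHII}. From the explicit formula it then follows that $f_\C(I^{p,q}(V))\subseteq I^{p,q}(V')$, so $f_\C$ induces a graded map $G(V)\to G(V')$, and the compatibility of $f_\C$ with $W$ shows that $U^{fil}\circ G=\Pi_\C^{W}$ on morphisms as well.

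The heart of the argument, and the step I expect to be the main obstacle, is promoting $G$ to a \emph{symmetric monoidal} functor. This rests on the K\"{u}nneth-type identity
\[I^{p,q}(V\otimes V')=\bigoplus_{\substack{p_1+p_2=p\\ q_1+q_2=q}}I^{p_1,q_1}(V)\otimes I^{p_2,q_2}(V'),\]
which I would establish from the defining formula together with the strictness of $W$, $F$ and $\overline{F}$ on the tensor product. Summing over $p+q=m$ yields a natural isomorphism $G(V\otimes V')\cong G(V)\otimes G(V')$ of graded vector spaces, where the right-hand side carries the grading of $gr\Vect_\C$. This isomorphism is the identity on the common underlying space $V_\C\otimes V'_\C$, which makes the verification of the associativity, unit and symmetry coherences immediate. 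For the unit, the unit object $\mathbf{1}$ of $\MHS_\kk$ is pure of type $(0,0)$, so $G(\mathbf{1})=\C$ concentrated in degree $0$, matching the unit of $gr\Vect_\C$.

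Finally, since $U^{fil}$ is symmetric monoidal and $U^{fil}\circ G=\Pi_\C^{W}$ as established above, the factorization holds in the category of symmetric monoidal functors, giving the commutative triangle. The isomorphisms $\bigoplus_{p+q=m}I^{p,q}\xrightarrow{\sim}Gr_m^WV_\C$ furnish a natural isomorphism $G\cong gr\circ\Pi_\C^{W}$ of functors to $gr\Vect_\C$, whence applying $U^{fil}$ yields the stated isomorphism $U^{fil}\circ gr\circ\Pi_\C^{W}\cong\Pi_\C^{W}$.
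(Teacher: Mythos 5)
Your proof is correct and follows essentially the same route as the paper: both define $G$ via Deligne's bigrading $I^{p,q}(V)$ and use $W_mV_\C=\bigoplus_{p+q\leq m}I^{p,q}(V)$ to get $U^{fil}\circ G=\Pi_\C^{W}$. If anything, your explicit appeal to the K\"{u}nneth-type identity $I^{p,q}(V\otimes V')=\bigoplus I^{p_1,q_1}(V)\otimes I^{p_2,q_2}(V')$ is a more careful justification of the symmetric monoidality of $G$ than the paper's one-line assertion, which derives it only from functoriality of the splitting.
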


\begin{proof}
Let $(V,W,F)$ be a mixed Hodge structure. By \cite[1.2.11]{DeHII} (see also \cite[Lemma 1.12]{GS}), there is a direct sum decomposition
$V_\C=\bigoplus I^{p,q}(V)$ where 
\[I^{p,q}(V)=(F^p W_{p+q}V_\C)\cap \left(\overline{F}^q W_{p+q}V_\C+\sum_{i>0}\overline{F}^{q-i} W_{p+q-1-i}V_\C\right).\]
This decomposition is functorial for morphisms of mixed Hodge structures
and satisfies 
\[W_mV_\C=\bigoplus_{p+q\leq m} I^{p,q}(V).\]
Define $G$ by letting $G(V,W,F)^n:=\bigoplus_{p+q=n} I^{p,q}(V)$ for any mixed Hodge structure.  Since $f(I^{p,q}(V))\subset I^{p,q}(V')$ for every morphism $f:(V,W,F)\to (V',W,F)$ of mixed Hodge structures, $G$ is functorial.
To see that $G$ is strong symmetric monoidal it suffices to use the definition of $I^{p,q}$ together with the 
tensor product mixed Hodge structure defined via the filtered tensor product, to obtain isomorphisms
\[\sum_{\substack{p+q=n\\p'+q'=n'}} I^{p,q}(V)\otimes I^{p',q'}(V')\cong 
\sum_{i+j=n+n'}I^{i,j}(V\otimes V')
\]
showing that the splittings $I^{*,*}$ are compatible with tensor products (see also \cite[Proposition 1.9]{Mo}).

The functor $U^{fil}:gr\Vect\lra \Ff\Vect$ is the strong symmetric monoidal functor given by
$$\bigoplus_n V^n\mapsto (V,W),\text{ with }W_mV:=\bigoplus_{n\leq m} V^n.$$
Therefore we have $U^{fil}\circ G=\Pi_\C^{W}$. In order to prove the isomorphism 
$U^{fil}\circ gr\circ \Pi_\C^{W}\cong \Pi_\C^{W}$ it suffices to note that 
there is an isomorphism of functors
$gr\circ U^{fil}\cong \mathrm{Id}.$
\end{proof}

\section{Descent of splittings of lax symmetric monoidal functors}\label{Section_descent}

In this section, we study lax symmetric monoidal functors to vector spaces over a field of characteristic zero $\kk$ equipped with a compatible filtration. More precisely, we are interested in lax symmetric  monoidal maps $\Cc\lra\Ff\Vect_\kk$. Our goal is to prove that the existence of a lax symmetric monoidal splitting for such a functor (i.e. of a lift of this map to $\Cc\lra gr\Vect_\kk$) can be checked after extending the scalars to a larger field. Our proof follows similar arguments to those appearing in \cite[Section 2.4]{CG1}, see also \cite{santosmoduli} and \cite{Su}.
A main advantage of our approach with respect to these references is that, in proving descent at the level of functors,
we avoid the use of minimal models (and thus restrictions to, for instance, operads with trivial arity 0).

It will be a bit more convenient to study a more general situation where $\Cc$ is allowed to be a colored operad instead of a symmetric monoidal category. Indeed recall that any symmetric monoidal category can be seen as an operad whose colors are the objects of $\Cc$ and where a multimorphism from $(c_1,\ldots,c_n)$ to $d$ is just a morphism in $\Cc$ from $c_1\otimes\ldots\otimes c_n$ to $d$. Then, given another symmetric monoidal category $\Dd$, there is an equivalence of categories between the category of lax symmetric monoidal functors from $\Cc$ to $\Dd$ and the category of $\Cc$-algebras in the symmetric monoidal category $\Dd$.

We fix $(V,W)$ a map of colored operads $\Cc\lra \Ff\Vect_\kk$ such that for each color $c$ of $\Cc$, the vector space $V(c)$ is finite dimensional. We denote by $\Aut_W(V)$ the set of its automorphisms in the category of $\Cc$-algebras in $\Ff\Vect_\kk$ and by $\Aut(Gr^W V)$ the set of automorphisms of $Gr^WV$ in the category of $\Cc$-algebras in $gr\Vect_\kk$. We have a morphism $gr:\Aut_W(V)\to \Aut(Gr^W V)$.

Let  $\kk\to R$ be a commutative $\kk$-algebra. The
correspondence
$$R \mapsto \AUT_W{(V)}(R) := \Aut_W(V \otimes_{\kk}R)$$
defines a functor
$\AUT_W(V):\cat{Alg}_\kk \lra \cat{Gps}$
from the category  $\cat{Alg}_\kk$  of commutative $\kk$-algebras, to the category $\cat{Gps}$ of groups. Clearly, we have $\AUT_W{(V)} (\kk) = \Aut_W(V)$. We define in a similar fashion a functor $\AUT(Gr^WV)$ from $\cat{Alg}_\kk$ to $\cat{Gps}$.
 
We recall the following properties:

\begin{prop}\label{algebraicgroups}
Let $(V,W)$ be as above.
\begin{enumerate}
\item $\AUT_W(V)$
is a group scheme whose group of $\kk$-points is $\Aut_W(V)$.
\item The functor $Gr^W$ induces a morphism $\underline{gr} :\AUT_W{(V)} \to \AUT(Gr^W V)$  of group schemes.
\item The kernel
$U := \Ker \left( \underline{gr} :\AUT_W(V) \to
\AUT(Gr^W V) \right)$
is a unipotent group scheme over $\kk$.
\end{enumerate}
\end{prop}

\begin{proof} 
We first observe that there is an isomorphism
\[\AUT_W(V)\cong\on{lim}_S\AUT_W(V_S)\]
in which the limit is taken over the poset of finite sets $S$ of objects of $\Cc$ and $V_S$ denotes the restriction of $V$ to those objects. We can write the groups $\Aut_W(V)$ and $\AUT(Gr^WV)$ as similar limits. 
Therefore we may restrict to the case when $\Cc$ has finitely many objects and
prove that in this case, the above objects live in the category of algebraic groups.

Let $N$ be such that the vector space $\oplus_{c\in\Cc} V(c)$ can be linearly embedded in $\kk^N$. Then $\Aut_W(V)$ is the closed subgroup of $\mathrm{GL}_N(\kk)$ defined by the polynomial equations that express the data of a filtration preserving $\Cc$-algebra automorphism.
Similarly, inside the functor of linear automorphisms $\oplus_{c\in\Cc} V(c)\otimes_\kk R\lra \oplus_{c\in\Cc} V(c)\otimes_\kk R$, let $F(R)$ 
be those preserving the structure of $V$ as a $\Cc$-algebra in filtered vector spaces.
The condition of preserving the filtration and the algebra structure is given by polynomial
equations in the matrix entries and so $F$ is representable (this is also explained in Section 7.6 of \cite{Waterhouse}). It follows that 
$\AUT_W(V)$ is an algebraic group and its group of $\kk$-points is $\Aut_W(V)$. 
Hence (1) is satisfied.

For every commutative $\kk$-algebra $R$, the map
\[
\AUT_W{(V)}(R) = \Aut_W(V\otimes_{\kk}R) \longrightarrow \Aut(Gr^W (V\otimes_{\kk}R)) = \AUT(Gr^W V)(R)
\]
is a morphism of groups which is natural in $R$. Thus (2)
follows and hence the kernel $U$ is an algebraic group.
It now suffices to take a basis of $\oplus_{c\in\Cc} V(c)$ compatible with $W$. Then we may view $U$ as
a subgroup of the group of upper-triangular matrices with 1's on the diagonal. Hence (3) is satisfied.
\end{proof}

\begin{lemm}
\label{split_autos}
Let $(V,W)$ be as above. The following assertions are equivalent:
\begin{enumerate}
\item The pair $(V,W)$ admits a lax symmetric monoidal splitting: $W_pV\cong\bigoplus_{q\leq p} Gr^W_qV$.
\item The morphism $gr:\Aut_W(V)\to \Aut(Gr^W V)$ is surjective.
\item There exists $\alpha\in\kk^*$ which is not a root of unity together with an automorphism $\Phi\in \Aut_W(V)$
such that $gr(\Phi)=\psi_\alpha$ is the grading automorphism of $Gr^W V$ associated with $\alpha$, defined by 
$$\psi_\alpha(a)=\alpha^{p}a,\text{ for }a\in Gr_p^WV.$$
\end{enumerate}
\end{lemm}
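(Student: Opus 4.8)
The plan is to prove the cyclic chain of implications $(1)\Rightarrow(2)\Rightarrow(3)\Rightarrow(1)$, using the pro-algebraic group scheme structure established in Proposition \ref{algebraicgroups}. The implication $(1)\Rightarrow(2)$ is the cheapest: a lax monoidal splitting $W_pV\cong\bigoplus_{q\leq p}Gr^W_qV$ is precisely an isomorphism of $\Cc$-algebras in $\Ff\Vect_\kk$ between $(V,W)$ and the filtered object $U^{fil}(Gr^WV)$ underlying the graded one. Conjugating by this isomorphism identifies $\Aut_W(V)$ with $\Aut_W(U^{fil}(Gr^WV))$, and for a split filtration every graded automorphism lifts tautologically to a filtered one (take the same map, forgetting that it respects the grading), so $gr$ is surjective.

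For $(2)\Rightarrow(3)$, I would observe that the grading automorphism $\psi_\alpha$ is a well-defined element of $\Aut(Gr^WV)$ for any $\alpha\in\kk^*$, since multiplication by $\alpha^p$ on the weight-$p$ piece is visibly compatible with the lax monoidal structure (the tensor product adds weights, and $\alpha^p\cdot\alpha^q=\alpha^{p+q}$). Choosing any $\alpha\in\kk^*$ that is not a root of unity—which exists because $\kk$ has characteristic zero and hence is infinite—surjectivity of $gr$ produces the desired lift $\Phi\in\Aut_W(V)$ with $gr(\Phi)=\psi_\alpha$.

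The implication $(3)\Rightarrow(1)$ is the heart of the lemma and where I expect the main work to lie. The idea is that the existence of a single filtered automorphism $\Phi$ lying over the \emph{semisimple} grading automorphism $\psi_\alpha$ lets us split the filtration by decomposing $V$ into eigenspaces of $\Phi$. Concretely, since $gr(\Phi)=\psi_\alpha$ acts on $Gr^W_pV$ as multiplication by $\alpha^p$, the distinct eigenvalues $\alpha^p$ (distinct precisely because $\alpha$ is not a root of unity) correspond to the distinct weights. The obstacle is that $\Phi$ itself need not be semisimple; I would therefore pass to its semisimplification. Using the Jordan decomposition $\Phi=\Phi_s\cdot\Phi_u$ in the pro-algebraic group $\AUT_W(V)(\kk)$ and invoking that $gr$ preserves semisimple parts (as in the proof of Proposition \ref{algebraicgroups}(4)), one gets $gr(\Phi_s)=\psi_\alpha$ as well, so we may assume $\Phi$ is semisimple. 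Then the eigenspace decomposition $V(c)=\bigoplus_p V_{\alpha^p}(c)$ is a decomposition by $\Cc$-algebra subobjects, and the filtration-compatibility of $\Phi$ forces $W_pV=\bigoplus_{q\leq p}V_{\alpha^q}$, giving $V_{\alpha^p}\cong Gr^W_pV$ as the sought splitting.

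The subtle point to check carefully is that the eigenspace decomposition of a semisimple $\Phi$ is compatible with the lax monoidal (colored-operad) structure: one must verify that the multimorphism maps send a product of eigenvectors with eigenvalues $\alpha^{p_1},\dots,\alpha^{p_n}$ into the eigenspace for $\alpha^{p_1+\cdots+p_n}$, which follows since $\Phi$ is an algebra automorphism and $\alpha^{p_1}\cdots\alpha^{p_n}=\alpha^{p_1+\cdots+p_n}$. This is exactly where non-root-of-unity is used, ensuring the weight grading is recovered cleanly from the eigenvalues without collisions.
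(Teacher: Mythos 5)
Your proof is correct and follows essentially the same route as the paper's: the implications $(1)\Rightarrow(2)\Rightarrow(3)$ are dispatched as immediate, and $(3)\Rightarrow(1)$ is proved by passing to the semisimple part of $\Phi$ via the Jordan decomposition in $\AUT_W(V)$, showing the eigenvalues are exactly the $\alpha^p$, and splitting $W$ by the $\alpha^p$-eigenspaces of $\Phi_s$ (with the non-root-of-unity hypothesis forcing each eigenspace to sit in the correct filtration level). The only cosmetic difference is that you spell out the monoidality check for the resulting eigenspace grading, which the paper leaves implicit.
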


\begin{proof}
 The implications $(1)\Rightarrow (2)\Rightarrow (3)$ are trivial.
We show that (3) implies (1). Let $\Phi\in \Aut_W(V)$ be such that $gr \Phi=\psi_\alpha$. We will first produce a decomposition $\Phi=\Phi_s.\Phi_u$ which is such that for any object $c$ of $\Cc$, the restrictions $(\Phi_s(c),\Phi_u(c))$ is a Jordan decomposition for $\Phi(c)$. In order to do that, recall that we have an isomorphism
\[\Aut_W(V)=\on{lim}_{S}\Aut_W(V_S)\]
where the limit is taken over the poset of finite subsets $S$ of objects of $\Cc$ and $V_S$ denotes the restriction of $V$ to the subset $S$. For each of the groups $\Aut_W(V_S)$ we can find a Jordan decomposition of the image of $\Phi$ in each of them. The transition maps between those groups preserve this decomposition and it follows that this decomposition induces a decomposition of $\Phi$ with the desired property.

By \cite[Theorem 4.4]{Borel}, there is a decomposition of the form $V(c)=V'(c)\oplus V''(c)$,
where
$$V'(c)=\bigoplus V_{p}(c)\text{ with }V_{p}(c):=\Ker(\Phi_s(c)-\alpha^{p}I)$$
and $V''(c)$ is the complementary subspace corresponding to the remaining factors of the characteristic polynomial of $\Phi_s(c)$. 
By assumption, $Gr^WV(c)$ contains nothing but eigenspaces of eigenvalue $\alpha^p$. Therefore we have $Gr^WV''(c)=0$
and
one concludes that $V''(c)=0$.

In order to show that $W_pV=\bigoplus_{i\leq p}V_p$ it suffices to prove it objectwise. 
Let $c$ be an object of $\Cc$. For $x\in V_p(c)$, let $q$ be the smallest integer such that $x\in W_qV(c)$.
Then $x$ defines a class $x+W_{q+1}V(c)\in gr V(c)$, and
$$\psi_\alpha(x+W_{q-1}V(c))=\alpha^{q}x+W_{q-1}V(c)=\Phi(x)+W_{q-1}V(c)=\alpha^{p}x+W_{q-1}V(c).$$
Then $(\alpha^{q}-\alpha^{p})x\in W_{q-1}V(c)$. Since $x\notin W_{q-1}V(c)$ we have $q=p$, hence $x\in W_pV$.
\end{proof}

We may now state and prove the main theorem of this section.

\begin{theo}\label{descens_rsplittings}
Let $(V,W)$ be a map of colored operads $\Cc\lra \Ff\Vect_\kk$ such that for each color $c$ of $\Cc$, the vector space $V(c)$ is finite dimensional. Let $\kk\subset\mathbb{K}$ be a field extension. Then $V$ admits a lax symmetric monoidal splitting if and only if $V_{\mathbb{K}}:=V\otimes_\kk\mathbb{K}:\Cc\lra \Vect_\mathbb{K}$ admits a lax symmetric monoidal splitting.
\end{theo}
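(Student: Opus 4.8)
The plan is to reduce the field-independence statement to the criterion (3) in Lemma \ref{split_autos}, which has the advantage of being detectable by a single algebraic-group-theoretic condition. By that lemma, $V$ admits a lax monoidal splitting if and only if there exists $\alpha\in\kk^*$, not a root of unity, together with $\Phi\in\Aut_W(V)=\AUT_W(V)(\kk)$ lifting the grading automorphism $\psi_\alpha$; and similarly $V_\mathbb{K}$ splits if and only if such a lift exists over $\mathbb{K}$. The key observation is that the grading automorphism $\psi_\alpha$ of $Gr^WV$ is a $\kk$-point of $\AUT(Gr^WV)$ (it acts by the scalar $\alpha^p$ on the weight-$p$ piece, which is defined already over $\kk$), so the question becomes whether a fixed $\kk$-point of the target lies in the image of the morphism of affine group schemes $\mathbf{gr}:\AUT_W(V)\to\AUT(Gr^WV)$, and whether that image property is stable under the base change $-\otimes_\kk\mathbb{K}$.

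The heart of the argument is therefore to show that surjectivity of $\mathbf{gr}$ onto the relevant element, i.e. the solvability of the lifting problem, descends along field extensions. First I would note the easy direction: if $V$ splits over $\kk$, then applying $-\otimes_\kk\mathbb{K}$ to the splitting isomorphism $W_pV\cong\bigoplus_{q\le p}Gr^W_qV$ yields a splitting of $V_\mathbb{K}$, since $\mathbb{K}$ is flat over $\kk$ and the base-change functor is symmetric monoidal and compatible with the weight filtration. For the converse, I would exploit the structure established in Proposition \ref{algebraicgroups}: the morphism $\mathbf{gr}$ fits into a short exact sequence of affine group schemes with kernel $\mathbf{N}$ pro-unipotent over $\kk$. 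The fibre of $\mathbf{gr}$ over the $\kk$-point $\psi_\alpha$, provided it is nonempty over $\mathbb{K}$, is a torsor under $\mathbf{N}$; since $\mathbf{N}$ is pro-unipotent and we are in characteristic zero, such a torsor is trivial over any field, so a $\kk$-point exists as soon as a $\mathbb{K}$-point does. Concretely, a lift $\Phi\in\AUT_W(V)(\mathbb{K})$ of $\psi_\alpha$ can be produced over $\kk$ because the obstruction to lifting lives in a pro-unipotent group whose $H^1$ with field coefficients vanishes in characteristic zero.

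The main obstacle I anticipate is handling the element $\alpha$ itself: criterion (3) requires $\alpha\in\kk^*$, so when passing from a $\mathbb{K}$-splitting back to $\kk$ one must ensure the scaling parameter can be chosen inside $\kk$. This is where the precise formulation matters: the grading automorphism $\psi_\alpha$ is already a $\kk$-point for any $\alpha\in\kk^*$ (one never needs $\alpha$ to leave the base field to define the target element), so I would fix once and for all an $\alpha\in\kk^*$ that is not a root of unity (possible since $\kk$ has characteristic zero, hence is infinite) and phrase the descent purely as: the $\kk$-point $\psi_\alpha\in\AUT(Gr^WV)(\kk)$ lifts to $\AUT_W(V)(\kk)$ if and only if it lifts to $\AUT_W(V)(\mathbb{K})$. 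The remaining technical care is to make the torsor-triviality argument uniform across the pro-system, which is handled by writing $\Cc$ as a filtered colimit of finite suboperads exactly as in the proof of Proposition \ref{algebraicgroups} and checking that a compatible system of lifts assembles; the unipotence of $\mathbf{N}$ at each finite level guarantees a lift there, and the functoriality of the construction gives compatibility in the limit. Combining the two directions with Lemma \ref{split_autos} then yields the stated equivalence.
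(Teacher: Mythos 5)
Your proposal is correct and follows essentially the same route as the paper: both reduce the statement to the lifting problem for the grading automorphism via Lemma \ref{split_autos}, and both descend the lift from $\mathbb{K}$ to $\kk$ using the pro-unipotence of $\mathbf{N}=\Ker(\mathbf{gr})$ from Proposition \ref{algebraicgroups} together with the vanishing of $H^1$ for pro-unipotent groups in characteristic zero. The paper phrases this via the Galois-cohomology exact sequence of Waterhouse applied to the surjection of group schemes (criterion (2) of the lemma), while you phrase it as triviality of the $\mathbf{N}$-torsor fibre over $\psi_\alpha$ (criterion (3)) -- a purely cosmetic difference.
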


\begin{proof}
We may assume without loss of generality that $\mathbb{K}$ is algebraically closed. If $V_{\mathbb{K}}$ admits a splitting, 
the map 
$$
\AUT_W(V)(\mathbb{K}) \longrightarrow \AUT(Gr^WV)(\mathbb{K})
$$
is surjective by Lemma $\ref{split_autos}$. Our goal is to prove surjectivity of 
$$
\AUT_W(V)(\kk) \longrightarrow \AUT(Gr^WV)(\kk).
$$
As in Proposition \ref{algebraicgroups}, we can write those groups as filtered limits. Since an inverse limit of surjections is a surjection, it is enough to prove the result when $\Cc$ has finitely many objects.

From \cite[Section 18.1]{Waterhouse} there is an exact sequence of groups
$$
1 \longrightarrow U(\mathbf{\kk}) \longrightarrow
\AUT_W(V)(\kk) \longrightarrow \AUT(Gr^WV) (\kk)
\longrightarrow H^1(\mathbb{K}/\kk, U)
\longrightarrow \dots
$$
where $U$ is a unipotent algebraic group by Proposition $\ref{algebraicgroups}$ and our assumption that $\Cc$ has finitely many objects. Since $\mathbf{\kk}$ has characteristic zero the group
$H^1(\mathbb{K}/\kk, U)$ is trivial (see 
\cite[Example 18.2.e]{Waterhouse}) and we deduce the desired surjectivity.
\end{proof}

From this theorem we deduce that Deligne's splitting holds over $\Q$. We record this fact in the following Lemma.

\begin{lemm}[Deligne's splitting over $\Q$]\label{Deligne_splitting_over_Q}
The forgetful functor $\Pi_\Q^{W}:\MHS_\Q\lra \Ff\Vect_\Q$ given by $(V,W,F)\mapsto (V,W)$ admits a factorization
$$
\xymatrix@R=4pc@C=4pc{
\MHS_\Q\ar[r]^{G}\ar[dr]_{\Pi_\Q^{W}}&gr\Vect_\Q\ar[d]^{U^{fil}}\\
&\Ff\Vect_\Q
}
$$
into lax symmetric monoidal functors. In particular, there is an isomorphism of functors
$$U^{fil}\circ gr\circ \Pi_\Q^{W}\cong \Pi_\Q^{W},$$
where $gr:\Ff\Vect_\Q\lra gr\Vect_\Q$ is the graded functor given by $gr(V_\Q,W)^p=Gr_p^WV_\Q$. 
\end{lemm}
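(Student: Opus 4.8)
The plan is to deduce this statement from the corresponding result over $\C$, namely Deligne's splitting (Lemma \ref{Deligne_splitting}), by invoking the descent theorem (Theorem \ref{descens_rsplittings}). The key observation is that the desired factorization of $\Pi_\Q^{W}$ through $gr\Vect_\Q$ is, up to unwinding the definitions, exactly the statement that the lax monoidal functor $\Pi_\Q^{W}:\MHS_\Q\lra \Ff\Vect_\Q$ admits a lax monoidal splitting in the sense of Lemma \ref{split_autos}(1). The subtlety is that the descent theorem is phrased for a \emph{single} filtered vector space $(V,W)$ arising from a map of colored operads $\Cc\lra\Ff\Vect_\kk$, whereas here we want a splitting natural in \emph{all} mixed Hodge structures simultaneously. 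So the first step is to package the whole category $\MHS_\Q$ into the framework of Section \ref{Section_descent}.

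The way I would do this is to take $\Cc$ to be the colored operad (or symmetric monoidal category) underlying $\MHS_\Q$ itself, and let $(V,W)$ be the forgetful lax monoidal functor $\Pi_\Q^{W}:\MHS_\Q\lra\Ff\Vect_\Q$. The finite-dimensionality hypothesis of Theorem \ref{descens_rsplittings} holds because every mixed Hodge structure is by definition a finite-dimensional vector space. Taking the field extension $\Q\subset\C$, Theorem \ref{descens_rsplittings} tells me that $\Pi_\Q^{W}$ admits a lax monoidal splitting if and only if its base change to $\C$ does. But the base change of $\Pi_\Q^{W}$ along $\Q\subset\C$ is precisely $\Pi_\C^{W}$, and Lemma \ref{Deligne_splitting} already provides a lax monoidal splitting of $\Pi_\C^{W}$ via Deligne's functor $G$. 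Hence the splitting descends to $\Q$, which is exactly the claimed factorization. The final sentence about $U^{fil}\circ gr\circ\Pi_\Q^{W}\cong\Pi_\Q^{W}$ then follows formally from $gr\circ U^{fil}\cong\mathrm{Id}$, exactly as in the proof of Lemma \ref{Deligne_splitting}.

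The main obstacle I anticipate is verifying that the hypotheses of Theorem \ref{descens_rsplittings} genuinely apply when $\Cc$ is taken to be $\MHS_\Q$ viewed as a symmetric monoidal category, and that the base change along $\Q\subset\C$ of the functor $\Pi_\Q^{W}$ is compatible with the identification of its $\C$-valued splitting coming from Lemma \ref{Deligne_splitting}. Concretely, one must check that a $\Cc$-algebra splitting in the sense of Section \ref{Section_descent} is the same datum as a factorization of $\Pi_\Q^{W}$ through $U^{fil}:gr\Vect_\Q\to\Ff\Vect_\Q$ as lax symmetric monoidal functors; this is a matter of matching the definition of lax monoidal splitting in Lemma \ref{split_autos}(1) with the statement of the lemma, and should be routine but requires care. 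One should also confirm that the colored operad underlying $\MHS_\Q$ can be written as a filtered colimit of suboperads with finitely many colors, as used in the proof of Proposition \ref{algebraicgroups}, so that the pro-algebraic group machinery applies; this holds because $\MHS_\Q$ is an essentially small abelian category.
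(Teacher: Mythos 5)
Your proposal is correct and follows essentially the same route as the paper: the paper's proof is precisely to apply Theorem \ref{descens_rsplittings} to the lax monoidal functor $\Pi_\Q^{W}$ (viewing $\MHS_\Q$ as a colored operad), using that $\Pi_\Q^{W}\otimes_\Q\C=\Pi_\C^{W}$ splits by Lemma \ref{Deligne_splitting}. The hypothesis checks you flag (finite-dimensionality of each $V(c)$ and essential smallness of $\MHS_\Q$) are exactly the ones implicitly used there.
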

 
\begin{proof}
We apply Theorem \ref{descens_rsplittings} to the lax symmetric monoidal functor $\Pi_\Q^{W}$ using the fact that $\Pi_\Q^{W}\otimes_\Q\C$ admits a splitting by Lemma \ref{Deligne_splitting}.
\end{proof} 

\begin{rem}
We want to emphasize that Theorem \ref{descens_rsplittings} does not say that the splitting of the previous lemma recovers the splitting of Lemma \ref{Deligne_splitting} after tensoring with $\C$. In fact, it can probably be shown that such a splitting cannot exist. Nevertheless, the existence of Deligne's splitting over $\C$ abstractly forces the existence of a similar splitting over $\Q$ which is all this Lemma is saying. Note as well that these are not splittings of mixed Hodge structures, but only of the weight filtration. They are also referred to as \textit{weak splittings} of mixed Hodge structures (see for example \cite[Section 3.1]{PS}).
As is well-known, mixed Hodge structures do not split in general.
\end{rem}

The above splitting over $\Q$ yields the following ``purity implies formality'' statement in the abstract setting of functors defined from the category of complexes of mixed Hodge structures. Given a rational number $\alpha$,
denote by $\ch(\MHS_\Q)^{\alpha\text{-}pure}$ the full subcategory of $\ch(\MHS_\Q)$ of complexes with pure weight $\alpha$ homology: an object $(K,W,F)$ in $\ch(\MHS_\Q)^{\alpha\text{-}pure}$ is such that
$Gr^p_WH_n(K)=0$ for all $p\neq \alpha n.$

\begin{coro}\label{coro-purity formality Q}
The restriction of the functor $\Pi_\Q:\ch(\MHS_\Q)\lra \ch(\Q)$ to
the category  
$\ch(\MHS_\Q)^{\alpha\text{-}pure}$
is formal for any non-zero rational number $\alpha$.
\end{coro}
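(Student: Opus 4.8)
The plan is to combine the two formality results already established in the excerpt: the abstract criterion of Proposition \ref{decomposition_formal}, which says that the forgetful functor $U:\chpure{\alpha}\lra\ch(\Aa)$ from $\alpha$-pure graded complexes is formal, together with the $\Q$-splitting of the weight filtration from Lemma \ref{Deligne_splitting_over_Q}. The strategy is to factor $\Pi_\Q$ through the category of graded complexes so that the purity hypothesis on homology becomes exactly the $\alpha$-purity hypothesis appearing in Proposition \ref{decomposition_formal}.

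First I would use Lemma \ref{Deligne_splitting_over_Q} to produce, over $\Q$, a lax symmetric monoidal functor
\[
G:\MHS_\Q\lra gr\Vect_\Q
\]
lifting the weight-filtered forgetful functor, in the sense that $U^{fil}\circ G\cong \Pi_\Q^{W}$ and hence $U\circ G\cong \Pi_\Q$ after composing with the forgetful functor $gr\Vect_\Q\to\Vect_\Q$. Applying $G$ degreewise gives a lax symmetric monoidal functor $\ch(G):\ch(\MHS_\Q)\lra\ch(gr\Vect_\Q)$ which is compatible with the graded K\"{u}nneth morphism, so that the forgetful functor $\Pi_\Q$ factors as
\[
\ch(\MHS_\Q)\xrightarrow{\ch(G)}\ch(gr\Vect_\Q)\xrightarrow{U}\ch(\Q),
\]
where $U$ forgets the grading. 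The key point is then to verify that $\ch(G)$ carries the subcategory $\ch(\MHS_\Q)^{\alpha\text{-}pure}$ into $\chpure{\alpha}$. This is where I expect the main bookkeeping to lie: one must check that the grading on $H_n(\ch(G)(K))$ induced by $G$ coincides with the weight grading $Gr^W_\bullet H_n(K)$, so that the $\alpha$-purity condition $Gr^p_W H_n(K)=0$ for $p\neq\alpha n$ translates exactly into the condition $H_n^p=0$ for $p\neq\alpha n$ defining $\chpure{\alpha}$. Since $G$ is a functor of abelian categories that is exact (it is a retract of the weight-filtration functor and the weight filtration is strict on $\MHS_\Q$ by the abelianness of mixed Hodge structures), passing $G$ through homology should commute, giving $H_n(\ch(G)(K))^p\cong Gr^W_p H_n(K)$.

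With this factorization in hand, the conclusion is immediate: restricted to $\ch(\MHS_\Q)^{\alpha\text{-}pure}$, the functor $\ch(G)$ lands in $\chpure{\alpha}$, and on that subcategory $U$ is formal by Proposition \ref{decomposition_formal}. Composing the zig-zag of monoidal natural weak equivalences witnessing formality of $U$ with the (strict) lax symmetric monoidal functor $\ch(G)$ yields a zig-zag of monoidal natural weak equivalences between $\Pi_\Q\cong U\circ\ch(G)$ and $H_*\circ\Pi_\Q$. The main obstacle is not conceptual but the verification that $G$ is exact and that it identifies the two gradings on homology; once that compatibility is settled, the result follows formally by pre-composition, since pre-composing a formal functor with any lax symmetric monoidal functor preserves formality.
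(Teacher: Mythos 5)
Your proposal is correct and follows exactly the paper's argument: the paper's proof is the one-line observation that the corollary follows from Proposition \ref{decomposition_formal} together with Lemma \ref{Deligne_splitting_over_Q}, and your write-up simply spells out the factorization $\Pi_\Q\cong U\circ\ch(G)$ and the identification $H_n(\ch(G)(K))^p\cong Gr^W_pH_n(K)$ (which holds by strictness of the weight filtration on the abelian category $\MHS_\Q$) that make that one-liner work.
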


\begin{proof}
This follows from Proposition \ref{decomposition_formal} together with Lemma \ref{Deligne_splitting_over_Q}.
\end{proof}

\section{Mixed Hodge complexes}\label{Section_MHC}
In this section, we construct an equivalence of symmetric monoidal $\infty$-categories between mixed Hodge complexes and complexes of mixed Hodge structures, lifting Beilinson's equivalence of triangulated categories.

We first recall the notion of mixed Hodge complex introduced by Deligne in \cite{DeHIII}
in its chain complex version (with homological degree).
Note that, in contrast with the classical setting of mixed Hodge theory,
in the homological version of a mixed Hodge complex, the weight filtration $W$ will be decreasing while the Hodge filtration $F$ will be increasing.

Let $\kk\subset \R$ be a subfield of the real numbers.

\begin{defi}\label{defMHC}
A \textit{mixed Hodge complex over $\kk$} is given by a filtered chain complex $(K_{\kk},W)$ over $\kk$, a bifiltered chain complex $(K_{\C},W,F)$ over $\C$, together with a finite string of filtered quasi-isomorphisms of filtered complexes of $\C$-vector spaces
$$(K_\kk,W)\otimes\C\xrightarrow{\alpha_1}(K_1,W)\xleftarrow{\alpha_2}\cdots\xrightarrow{\alpha_{l-1}} (K_{l-1},W)\xrightarrow{\alpha_l} (K_\C,W).$$
We call $l$ the \textit{length} of the mixed Hodge complex.
The following axioms must be satisfied:
\begin{enumerate}
\item[($\mathrm{MH}_0$)] The homology $H_*(K_\kk)$ is bounded and finite-dimensional.
\item[($\mathrm{MH}_1$)] The differential of $Gr^p_WK_\C$ is strictly compatible with $F$.
\item[($\mathrm{MH}_2$)] The filtration on $H_n(Gr_W^pK_{\C})$ induced by $F$ makes $H_n(Gr_W^pK_\kk)$ into a pure Hodge structure of
weight $p+n$.
\end{enumerate}

Such a mixed Hodge complex will be denoted by $\Kk=\{(K_\kk,W),(K_\C,W,F)\}$, omitting the data of the comparison morphisms $\alpha_i$.
\end{defi}

The above axioms imply that for all $n\geq 0$ the triple $(H_n(K_\kk),W[n],F)$ is a mixed Hodge structure over $\kk$, 
where $W[n]$ is the shifted weight filtration given by 
\[W[n]^p H_n(K_\kk):=W^{p-n}H_n(K_\kk).\]
Morphisms of mixed Hodge complexes are given by levelwise bifiltered morphisms of complexes making the corresponding diagrams commute. Denote by $\MHC_\kk$ the category of mixed Hodge complexes 
of a certain fixed length, which we omit in the notation.
The tensor product of mixed Hodge complexes is again a mixed Hodge complex (see \cite[Lemma 3.20]{PS}). 
This makes $\MHC_\kk$ into a symmetric monoidal category, with a filtered variant of the K\"{u}nneth formula.

\begin{defi}
A morphism $f:K\to L$ in $\MHC_\kk$ is said to be a \textit{weak equivalence} if $H_*(f_\kk)$ is an isomorphism of $\kk$-vector spaces.
\end{defi}

Since the category of mixed Hodge structures is abelian, the
homology of every complex of mixed Hodge structures is a graded mixed Hodge structure. We have a functor 
$$\Tt:\chb(\MHS_\kk)\lra \MHC_\kk$$
given on objects by 
$(K,W,F)\mapsto \{(K,TW),(K\otimes\C,TW,F)\}$,
where $TW$ is the shifted filtration $(TW)^pK_n:=W^{p+n}K_n$.
The comparison morphisms $\alpha_i$ are the identity.
Also, $\Tt$ is the identity on morphisms. This functor clearly preserves weak equivalences.
\begin{lemm}
The shift functor $\Tt:\chb(\MHS_\kk)\lra \MHC_\kk$ is strong symmetric monoidal.
\end{lemm}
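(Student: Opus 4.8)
The plan is to verify that the shift functor $\Tt:\chb(\MHS_\kk)\lra \MHC_\kk$ is symmetric monoidal by unwinding the definition of the tensor product on both sides and checking compatibility on each piece of data. The key observation is that $\Tt$ is essentially the identity on the underlying bifiltered complexes, so the only content is that the various filtrations behave correctly under the shift $TW$ and the K\"{u}nneth isomorphism. First I would recall that for a complex of mixed Hodge structures $(K,W,F)$, the data assigned by $\Tt$ is the filtered complex $(K,TW)$ over $\kk$ and the bifiltered complex $(K\otimes\C,TW,F)$ over $\C$, with comparison morphisms the identity. So to give the structure of a symmetric monoidal functor I must produce, for any two objects $K$ and $L$, a natural isomorphism $\Tt(K)\otimes\Tt(L)\cong\Tt(K\otimes L)$ in $\MHC_\kk$ that is coherent (associative and unital) and compatible with the symmetry.

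The main step is to check that the shifted weight filtration is compatible with the tensor K\"{u}nneth formula. On the complex of mixed Hodge structures $K\otimes L$, the weight filtration is the filtered tensor product $W_p(K\otimes L)_n=\sum_{i+j=p}\Img(W_iK\otimes W_jL\to (K\otimes L)_n)$, and the shift is $(TW)^p(K\otimes L)_n=W^{p+n}(K\otimes L)_n$. On the other side, the tensor product of mixed Hodge complexes uses the filtered K\"{u}nneth morphism, so I need to compare $(TW)^p$ on the tensor product with the convolution of the shifted filtrations $(TW)$ on each factor, taking into account that the tensor product of chain complexes distributes the homological degree as $n=a+b$. The heart of the computation is the identity
\[
(TW)^p(K\otimes L)_{a+b}=\sum_{i+j=p}(TW)^iK_a\otimes (TW)^jL_b,
\]
which follows directly from the definition of the shift $(TW)^pK_n=W^{p+n}K_n$ together with the defining formula for the tensor product weight filtration; the degree shift is absorbed because the total shift $p+(a+b)$ splits as $(i+a)+(j+b)$ whenever $i+j=p$. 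This shows the underlying filtered complexes match. The Hodge filtration $F$ requires no shift and is simply the tensor (convolution) filtration on both sides, so its compatibility is immediate from the fact that the K\"{u}nneth morphism respects $F$.

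Having matched all three filtrations, I would then verify that the comparison morphisms $\alpha_i$ agree: since both $\Tt(K)\otimes\Tt(L)$ and $\Tt(K\otimes L)$ have identity comparison morphisms (the former because the comparison maps of a tensor product of mixed Hodge complexes with identity comparisons are themselves identities, the latter by definition of $\Tt$), this amounts to a trivial check. Finally, the coherence and symmetry constraints hold because they are inherited from the symmetric monoidal structure on $\chb(\MHS_\kk)$ via filtered K\"{u}nneth, and $\Tt$ acts as the identity on the relevant morphisms. The main obstacle I anticipate is purely bookkeeping: keeping track of the interaction between the degree-dependent shift $TW$ and the convolution of filtrations under K\"{u}nneth, and confirming that the unit object $(\mathbf{1}$ in weight zero, concentrated in degree zero$)$ is sent correctly. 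Once the displayed filtration identity is established, everything else is formal, so I would present that identity as the crux and relegate coherence to the observation that it descends from the already-monoidal source category.
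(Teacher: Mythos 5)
Your proof is correct and rests on exactly the same key point as the paper's: the identity $T(W\otimes W)_p(K\otimes L)_n=(TW\otimes TW)_p(K\otimes L)_n$, i.e.\ that the degree shift distributes over the convolution of weight filtrations because $p+(a+b)$ splits as $(i+a)+(j+b)$. The additional checks you carry out (Hodge filtration, comparison morphisms, coherence) are the routine parts the paper leaves implicit.
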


\begin{proof}
It suffices to note that given filtered complexes $(K,W)$ and $(L,W)$, we have: 
\[T(W\otimes W)^p(K\otimes L)_n=(TW\otimes TW)^p(K\otimes L)_n.\qedhere\]
\end{proof}

Beilinson \cite{Be} gave an equivalence of categories between the derived category  of mixed Hodge structures and the homotopy category of a shifted version of mixed Hodge complexes. We will require a finer version of Beilinson's equivalence, in terms of symmetric monoidal $\infty$-categories.
Denote by $\iMHC_\kk$ the $\infty$-category obtained
by inverting weak equivalences of mixed Hodge complexes, omitting the length in the notation. As explained in \cite[Theorem 2.7.]{Drew}, this object is canonically a symmetric monoidal stable $\infty$-category. Note that in loc. cit., mixed Hodge complexes have fixed length 2 and are polarized. The results of \cite{Drew} as well as Beilinson's equivalence, are equally valid for the category of 
mixed Hodge complexes of an arbitrary fixed length.

\begin{theo}\label{theo: equivinfty}
The shift functor induces an equivalence 
$\ich^b(\MHS_\kk)\lra \iMHC_\kk$ of symmetric monoidal $\infty$-categories.
\end{theo}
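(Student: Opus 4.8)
The plan is to deduce the $\infty$-categorical statement from Beilinson's equivalence of triangulated categories, exploiting the fact that both sides are \emph{stable} $\infty$-categories. First I would check that the shift functor descends to the localizations: since $\Tt$ sends quasi-isomorphisms of bounded complexes of mixed Hodge structures to weak equivalences of mixed Hodge complexes (it is essentially the identity on the underlying $\kk$-complexes, only reindexing the filtration), it induces a functor of $\infty$-categories $\ich^b(\MHS_\kk)\to\iMHC_\kk$. By the preceding lemma, $\Tt$ is symmetric monoidal at the level of $1$-categories; since the tensor product preserves weak equivalences on both sides (Künneth over a field on the right, flatness of mixed Hodge structures on the left), this monoidal structure descends and the induced $\infty$-functor is symmetric monoidal. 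I would also record that it is \emph{exact}: $\Tt$ preserves the zero object and cofiber sequences, so the induced functor of stable $\infty$-categories preserves finite limits and colimits.

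Next I would recall that both localizations are stable symmetric monoidal $\infty$-categories: $\ich^b(\MHS_\kk)$ because $\MHS_\kk$ is abelian and symmetric monoidal, and $\iMHC_\kk$ by the cited result of Drew. Their homotopy categories are, respectively, the bounded derived category $D^b(\MHS_\kk)$ and Beilinson's homotopy category $\ho(\MHC_\kk)$, and under these identifications the functor $\ho(\Tt)$ induced on homotopy categories by the shift is precisely Beilinson's functor, which is an equivalence of triangulated categories.

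The key step is then the general principle that an exact functor between stable $\infty$-categories is an equivalence as soon as the induced functor on homotopy categories is an equivalence of (triangulated) categories. Essential surjectivity is immediate, since objects are unchanged by passing to the homotopy category. For full faithfulness one uses that in a stable $\infty$-category the mapping spaces are infinite loop spaces with $\pi_n\Map(X,Y)\cong\mathrm{Hom}_{\ho}(X,Y[-n])$ for $n\ge 0$; since $\ho(\Tt)$ is fully faithful and commutes with the shift (being triangulated), the induced maps on all these homotopy groups are isomorphisms, so $\Tt$ induces equivalences on mapping spaces. Applying this to our exact symmetric monoidal functor, whose homotopy-category incarnation is Beilinson's equivalence, shows that $\Tt$ induces an equivalence of underlying $\infty$-categories.

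I expect the main obstacle to be verifying cleanly that the symmetric monoidal structure genuinely descends to the localization, so that the resulting $\infty$-functor is symmetric monoidal in the strong sense (rather than merely monoidal on homotopy categories); this is where one must invoke the monoidality of the $\infty$-categorical localization together with the compatibility of weak equivalences with the tensor product, as set up in Drew's work. Once the underlying functor is known to be an equivalence and the functor is symmetric monoidal, the conclusion that it is an equivalence of symmetric monoidal $\infty$-categories is formal, since the inverse of a symmetric monoidal equivalence is canonically symmetric monoidal.
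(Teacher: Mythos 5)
Your proposal is correct and follows essentially the same route as the paper: both arguments reduce the $\infty$-categorical statement to Beilinson's equivalence on homotopy categories by observing that the two localizations are stable, that $\Tt$ is exact, and that an exact functor of stable $\infty$-categories inducing an equivalence of homotopy categories is an equivalence, with the symmetric monoidal structure supplied by Drew's work. The only detail worth flagging is that Beilinson's theorem is stated for \emph{polarizable} mixed Hodge structures, and the paper explicitly invokes the non-polarized variant (citing \cite{CG2}) to cover the setting at hand.
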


\begin{proof}
A proof in the polarizable setting appears in \cite{Drew}. Also, in \cite{BNT}, a similar statement 
is proven for a shifted version of mixed Hodge complexes. 
We sketch a proof in our setting. 

We first observe as in Lemma 2.6 of \cite{BNT} that both $\infty$-categories are stable and that the shift functor is exact. The stability of $\iMHC_\kk$ follows from the observation that this $\infty$-category is the Verdier quotient at the acyclic complexes of the $\infty$-category of mixed Hodge complexes with the homotopy equivalences inverted. This last $\infty$-category underlies a dg-category that can easily be seen to be stable. The stability of $\ich^b(\MHS_\kk)$ follows in a similar way. Since a complex of mixed Hodge structures is acyclic if and only if the underlying complex of $\kk$-vector spaces is acyclic, and $\Tt$ is the identity on the underlying complexes of $\kk$-vector spaces, it follows that $\Tt$ is exact. Therefore, in order to prove that $\Tt$ is an equivalence of $\infty$-categories, it suffices to show that it induces an equivalence of homotopy categories
\[D^{b}(\MHS_\kk)\lra \ho(\MHC_\kk).\]

In \cite[ Lemma 3.11]{Be}, it is proven that the shift functor $\Tt:\ch^b(\MHS_\kk^{p})\lra \MHC_\kk^p$ induces an equivalence at the level of homotopy categories. Here the superindex $p$ indicates that the mixed Hodge objects are polarized. One may verify that Beilinson's proof is equally valid if we remove the polarization (see also \cite[Theorem 4.10]{CG2} 
where Beilinson's equivalence is proven in the non-polarized version via other methods). The fact that $\Tt$ can be given the structure of a strong symmetric monoidal $\infty$-functor follows from the work of Drew in \cite{Drew}.
\end{proof}

\section{Logarithmic de Rham currents}\label{Section_logaritmic}

The goal of this section is to construct a strong symmetric monoidal $\infty$-functor from algebraic varieties over $\C$ to mixed Hodge complexes which computes the correct mixed Hodge structure after passing to homology. The construction for smooth varieties is relatively straightforward. It suffices to take a functorial mixed Hodge complex model for the cochains as constructed for instance in \cite{Na} and dualize it. The monoidality of that functor is slightly tricky as one has to move to the world of $\infty$-categories for it to exist. Once one has constructed this functor for smooth varieties, it can be extended to more general varieties by standard descent arguments.

We denote by $\Var_{\C}$ the category of complex schemes that are reduced, separated and of finite type. We use the word variety for an object of this category. We denote by $\Sm_{\C}$ the subcategory of smooth schemes. Both of these categories are essentially small (i.e. there is a set of isomorphisms classes of objects) and symmetric monoidal under the cartesian product.

We will make use of the following very simple observation.

\begin{prop}\label{prop: cartesian product oplax}
Let $\Cc$ and $\Dd$ be two categories with finite products seen as symmetric monoidal categories with respect to the product. Then any functor $F:\Cc\lra \Dd$ has a preferred oplax symmetric monoidal structure.
\end{prop}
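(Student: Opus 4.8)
The plan is to write down the oplax structure maps explicitly and then check that the coherence axioms follow formally from the universal property of products in $\Dd$. Recall that an oplax (symmetric) monoidal structure on $F$ consists of a morphism $\epsilon\colon F(\mathbf{1}_\Cc)\to \mathbf{1}_\Dd$ together with a natural transformation
\[
\mu_{X,Y}\colon F(X\times Y)\longrightarrow F(X)\times F(Y)
\]
subject to associativity, unit, and symmetry constraints. Both monoidal units are terminal objects, so I take $\epsilon$ to be the unique morphism from $F(\mathbf{1}_\Cc)$ to the terminal object $\mathbf{1}_\Dd$. For $\mu$, I use that $\Cc$ is cartesian to obtain projections $\pi_X\colon X\times Y\to X$ and $\pi_Y\colon X\times Y\to Y$, and that $\Dd$ has products to set
\[
\mu_{X,Y}:=\langle F(\pi_X),F(\pi_Y)\rangle\colon F(X\times Y)\longrightarrow F(X)\times F(Y),
\]
the unique morphism whose two components are $F(\pi_X)$ and $F(\pi_Y)$.

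First I would record naturality of $\mu$: for $f\colon X\to X'$ and $g\colon Y\to Y'$ the required square commutes because it does so after composing with each projection of $F(X')\times F(Y')$, where it reduces, via functoriality of $F$, to the identities $\pi_{X'}\circ(f\times g)=f\circ\pi_X$ and $\pi_{Y'}\circ(f\times g)=g\circ\pi_Y$ valid in any category with products. The organizing principle throughout is that two morphisms into a product of $\Dd$ agree if and only if their components agree, so every equation I must verify can be checked one projection at a time.

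Next I would verify the coherence axioms in this componentwise fashion. For associativity, the two composites $F(X\times Y\times Z)\rightrightarrows F(X)\times F(Y)\times F(Z)$ have, after projecting onto each of the three factors, the common value given by $F$ applied to the corresponding ternary projection of $\Cc$; they therefore agree by the universal property, once the associators—which are themselves the canonical product-comparison isomorphisms—are inserted. The unit axioms follow since any morphism to $\mathbf{1}_\Dd$ is unique, and the symmetry axiom follows because $F$ carries the symmetry of $\Cc$, which interchanges the two projections, to the swap map of $\Dd$.

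The main obstacle, such as it is, is purely bookkeeping: arranging the coherence diagrams so that the associativity and unit isomorphisms of the two cartesian structures are correctly interleaved with the maps $F(\pi_{-})$. There is no genuine difficulty, since each diagram becomes, after projecting onto the factors of a product in $\Dd$, an identity between iterated projections in $\Cc$ transported through $F$, and such identities hold in any category with finite products. This is precisely why the structure deserves to be called \emph{preferred}: it is forced by the universal properties and requires no choices beyond those already fixed in choosing the products of $\Cc$ and $\Dd$.
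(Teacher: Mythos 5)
Your construction is exactly the one the paper uses: define $\mu_{X,Y}$ as the unique map into $F(X)\times F(Y)$ whose components are $F$ applied to the two projections, take the unique map $F(\ast)\to\ast$ for the unit, and verify the axioms componentwise via the universal property. The argument is correct and matches the paper's proof, merely spelling out the coherence checks the paper leaves as ``one checks easily.''
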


\begin{proof}
We need to construct comparison morphisms $F(c\times c')\lra F(c)\times F(c')$. By definition of the product, there is a unique such functor whose composition with the first projection is the map $F(c\times c')\lra F(c)$ induced by the first projection $c\times c'\lra c$ and whose composition with the second projection is the map $F(c\times c')\lra F(c')$ induced by the second projection $c\times c'\lra c'$. Similarly, one has a unique map $F(\ast)\lra \ast$. One checks easily that these two maps give $F$ the structure of an oplax symmetric monoidal functor.
\end{proof}

\subsection{For smooth varieties}

In this section, we construct a lax symmetric monoidal functor 
\[\D_*:\inn(\cat{Sm}_{\mathbb{C}})\lra \iMHC_\Q\] such that its composition with the forgetful functor
$\iMHC_\Q\lra \ich(\Q)$
is naturally weakly equivalent to $S_*(-,\Q)$ as a lax symmetric monoidal functor. 

We will adapt Navarro-Aznar's construction of mixed Hodge diagrams \cite{Na}.
Let $X$ be a smooth projective complex variety and $j:U\hookrightarrow X$ an open subvariety such that $D:=X-U$ is a normal crossing divisor.
Denote by $\Aa^*_X$ the analytic de Rham complex
of the underlying real analytic variety of $X$ and let $\Aa^*_X(\log D)$ denote the subsheaf of $j_*\Aa^*_U$  of logarithmic forms in $D$.
Note that in Deligne's approach to mixed Hodge theory, the sheaf 
$\Omega_X^*(\log D)$ of holomorphic forms on $X$ with logarithmic poles along $D$
is used instead. As explained in 8.5 of \cite{Na}, the main 
advantage to consider analytic forms is the natural real structure obtained, together with a decomposition of the form
\[\Aa^n_X(\log D)\otimes \C=\bigoplus_{p+q=n} \Aa_X^{p,q}(\log D).\]
Also, there is an inclusion $\Omega_X^*(\log D)\hookrightarrow \Aa^*_X(\log D)\otimes\C$ which is a quasi-isomorphism
and $\Aa^*_X(\log D)$
may be naturally endowed with a multiplicative weight filtration $W$ (see 8.3 of \cite{Na}). Proposition 8.4 of loc. cit. gives a string of quasi-isomorphisms of sheaves of filtered cdga's over $\R$:
\[
(\mathbf{R}_{\TW} j_*\underline{\Q}_U,\tau)\otimes\R
\xrightarrow{\sim}
(\mathbf{R}_{\TW} j_*\Aa^*_U,\tau)
\xleftarrow{\sim}
(\Aa^*_X(\text{log} D),\tau)
\xrightarrow{\sim}(\Aa^*_X(\text{log} D),W),
\]
where $\tau$ is the canonical filtration.
In this diagram, 
\[\mathbf{R}_{\TW}j_*:\mathrm{Sh}(U,\ch^{\leq 0}(\kk))\lra \mathrm{Sh}(X,\ch^{\leq 0}(\kk))\]
is the functor defined by 
\[\mathbf{R}_{\TW}j_*:=\mathbf{s}_{\TW}\circ j_*\circ G^+\]
where $$G^+:\mathrm{Sh}(X,\ch^{\leq 0}(\kk))\lra \Delta \mathrm{Sh}(X,\ch^{\leq 0}(\kk))$$
is the Godement canonical cosimplicial resolution functor and
\[\mathbf{s}_{\TW}:\Delta \mathrm{Sh}(X,\ch^{\leq 0}(\kk))\lra \mathrm{Sh}(X,\ch^{\leq 0}(\kk))\]
is the Thom-Whitney simple functor introduced by Navarro in Section 2 of loc. cit. Both functors are lax symmetric monoidal and hence $\mathbf{R}_{\TW}j_*$ is a lax symmetric monoidal functor (see \cite[Section 3.2]{RR}). 

The complex
$\Aa^*_X(\log D)\otimes\C$ carries a natural multiplicative Hodge filtration
$F$ (see 8.6 of \cite{Na}). 
The above string of quasi-isomorphisms gives a commutative algebra object in
(cohomological) mixed Hodge complexes after taking global sections. Specifically,
the composition 
\[\mathbf{R}_{\TW}\Gamma(X,-):=\mathbf{s}_{\TW}\circ \Gamma(X,-)\circ G^+\]
gives a derived global sections functor
\[\mathbf{R}_{\TW}\Gamma(X,-):\mathrm{Sh}(X,\ch^{\leq 0}(\Q))\lra \ch^{\leq 0}(\Q)\]
which again is lax symmetric monoidal. There is also a filtered version of
this functor defined via the filtered Thom-Whitney simple (see Section 6 of \cite{Na}). Theorem 8.15 of loc. cit. asserts that by applying
the (bi)filtered versions of $\mathbf{R}_{\TW}\Gamma(X,-)$ to each of the pieces of the above string of quasi-isomorphisms, one obtains a commutative algebra object in mixed Hodge complexes $\Hh dg(X,U)$ whose cohomology gives Deligne's mixed Hodge structure on $H^*(U,\Q)$ and such that 
$$\Hh dg(X,U)_\Q=\mathbf{R}_{\TW}\Gamma(X,\mathbf{R}_{\TW}j_*\underline{\Q}_U)$$
is naturally quasi-isomorphic to $S^*(U,\C)$ (as a cochain complex). 
This construction is functorial for morphisms of pairs $f:(X,U)\to (X',U')$. The 
definition of $\Hh dg(f)$ follows as in the additive setting 
(see \cite[Lemma 6.1.2]{Huber} for details), by replacing the classical additive total simple
functor with the Thom-Whitney simple functor.

Now we wish to get rid of the dependence on the compactification. For this purpose, we define for $U$ a smooth variety over $\C$, a category $R_U$ whose objects are pairs $(X,U)$ where $X$ is smooth and proper variety containing $U$ as an open subvariety and $X-U$ is a normal crossing divisor. Morphisms in $R_U$ are morphisms of pairs. We then define $\D^*(U)$ by the formula
\[\D^*(U):=\on{colim}_{R_U\op}\Hh dg(X,U)\]
By theorems of Hironaka and Nagata, the category $R_U\op$ is a non-empty filtered category. Note that we have to be slightly careful here as the category of mixed Hodge complexes does not have all filtered colimits. However, we can form this colimit in the category of pairs $(K_\Q,W),(K_\C,W,F)$ having the structure required in Definition \ref{defMHC} but not necessarily satisfying the axioms $\mathrm{MH}_0$, $\mathrm{MH}_1$ and $\mathrm{MH}_2$. Since taking  filtered colimit is an exact functor, we deduce from the classical isomorphism between sheaf cohomology and singular cohomology that there is a quasi-isomorphism
\[\D^*(U)_\Q\to S^*(U,\Q)\]
This shows that the cohomology of $\D^*(U)$ is of finite type and hence, that $\D^*(U)$ satisfies axiom $\mathrm{MH}_0$. The other axioms are similarly easily seen to be satisfied. Moreover, filtered colimits preserve commutative algebra structures, therefore the functor $\D^*$ is a functor from $\Sm_\C\op$ to commutative algebras in $\MHC_\Q$. 

Since the coproduct in commutative algebras is the tensor product, we deduce from the dual of Proposition \ref{prop: cartesian product oplax} that $\D^*$ is canonically a lax symmetric monoidal functor from $\Sm_\C\op$ to $\MHC_\Q$. But since the comparison map
\[\D^*(U)_\Q\otimes_\Q \D^*(V)_\Q\lra \D^*(U\times V)_\Q\]
is a quasi-isomorphism, this functor extends to a strong symmetric monoidal $\infty$-functor 
\[\D^*:\inn(\Sm_\C)\op\lra \iMHC_\Q\]

\begin{rem}
A similar construction for real mixed Hodge complexes is done in \cite[Section 3.1]{bunkeregulators}. There is also a similar construction in \cite{Drew} that includes polarizations.
\end{rem}

Now, the category $\MHC_\Q$ is equipped with a duality functor. It sends a mixed Hodge complex $\{(K_\Q,W),(K_\C,W,F)\}$ to the linear duals $\{(K_\Q^{\vee},W^{\vee}),(K_\C^{\vee},W^{\vee},F^{\vee})\}$ where the dual of a filtered complex is defined as in \ref{Homandtensor}. One checks easily that this dual object satisfies the axioms of a mixed Hodge complex. Moreover, the duality functor $\MHC_\Q\op\lra \MHC_\Q$ is lax symmetric monoidal and preserves weak equivalences of mixed Hodge complexes, therefore it induces a lax symmetric monoidal $\infty$-functor
\[\iMHC_\Q\op\lra\iMHC_\Q\]
but in fact, we have the following proposition.

\begin{prop}\label{prop : duality is monoidal}
The dualization $\infty$-functor
\[\iMHC_\Q\op\lra\iMHC_\Q\]
is strong symmetric monoidal.
\end{prop}

\begin{proof}
It suffices to observe that the canonical map
\[K^{\vee}\otimes L^{\vee}\lra (K\otimes L)^{\vee}\]
is a weak equivalence. This follows from the fact that mixed Hodge complexes are assumed to have finite type cohomology.
\end{proof}

Composing the duality functor with $\D^*$, we get a strong symmetric monoidal $\infty$-functor
\[\D_*:\inn(\Sm_\C)\lra \iMHC_\Q \]

\begin{rem}
One should note that $\D^*$ comes from a lax symmetric monoidal functor from $\Sm_\C\op$ to $\MHC_\Q$. 
On the other hand, $\D_*$ is induced by a strict functor which is neither lax nor oplax. Indeed, it is obtained as the composition of $(\D^*)\op$ which is an oplax symmetric monoidal functor $\Sm_\C\lra(\MHC_\Q)\op$ and the duality functor which is a lax symmetric monoidal functor. Thus, the symmetric monoidal structure on $\D_*$ only exists at the $\infty$-categorical level. 
\end{rem}

To conclude this construction, it remains to compare the functor $\D_*(-)_\Q$ with the singular chains functor. These two functors are naturally quasi-isomorphic as shown in \cite{Na} but we will need to know that they are quasi-isomorphic as symmetric monoidal $\infty$-functors. We denote by $S_*(-,R)$ the singular chain complex functor from the category of topological spaces to the category of chain complexes over a commutative ring $R$. The functor $S_*(-,R)$ is lax symmetric monoidal. Moreover, the natural map 
\[S_*(X,R)\otimes S_*(Y,R)\to S_*(X\times Y,R)\]
is a quasi-isomorphism. This implies that $S_*(-,R)$ induces a strong symmetric monoidal $\infty$-functor from the category of topological spaces to the $\infty$-category $\ich(R)$ of chain complexes over $R$.  We still use the symbol $S_*(-,R)$ to denote this $\infty$-functor.

\begin{theo}\label{theo: equivalence cochains-sheaf}
The functors $\D_*(-)_\Q$ and $S_*(-,\Q)$ are weakly equivalent as strong symmetric monoidal $\infty$-functors from $\inn(\Sm_\C)$ to $\ich(\Q)$. 
\end{theo}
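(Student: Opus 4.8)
The plan is to establish the statement first at the level of cochains, for the functor $\D^*(-)_\Q$, and then to deduce the chain-level statement by dualizing. Recall that by construction $\D_*$ is obtained from $\D^*$ by postcomposing with the duality $\infty$-functor $\iMHC_\Q\op\to\iMHC_\Q$, and that the forgetful functor $\iMHC_\Q\to\ich(\Q)$ commutes with duality (the underlying $\Q$-complex of the dual mixed Hodge complex is the dual of the underlying $\Q$-complex). Hence $\D_*(-)_\Q$ is obtained from $\D^*(-)_\Q$ by postcomposing with the $\Q$-linear duality $\infty$-functor $(-)^\vee:\ich(\Q)\op\to\ich(\Q)$, so that $\D_*(U)_\Q\simeq (\D^*(U)_\Q)^\vee$. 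It therefore suffices to prove that $\D^*(-)_\Q$ and $S^*(-,\Q)$ are weakly equivalent as symmetric monoidal $\infty$-functors $\inn(\Sm_\C)\op\to\ich(\Q)$, and then to identify the dual of $S^*(-,\Q)$ with $S_*(-,\Q)$.

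For the cochain comparison, I would realize the classical objectwise quasi-isomorphism $\D^*(U)_\Q\to S^*(U,\Q)$ of \cite{Na} as the localization of a zig-zag of \emph{strictly} lax symmetric monoidal natural transformations of functors $\Sm_\C\op\to\ch(\Q)$. By construction $\D^*(U)_\Q$ is built from $\mathbf{R}_{\TW}\Gamma(X,\mathbf{R}_{\TW}j_*\underline{\Q}_U)$, a Thom–Whitney model for $\mathbf{R}\Gamma(U,\underline{\Q}_U)$; since the Godement and Thom–Whitney constructions are strictly symmetric monoidal, this is a lax symmetric monoidal functor computing sheaf cohomology together with its cup product. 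On the other hand, the multiplicative map of complexes of sheaves $\underline{\Q}_U\to\mathcal{S}^*_U$ from the constant sheaf into the sheafified singular cochain complex induces, after (Thom–Whitney) global sections, a cup-product-preserving zig-zag of quasi-isomorphisms connecting $\D^*(U)_\Q$ with the singular cochains $S^*(U,\Q)$, natural in $U$. As the symmetric monoidal localization functor $\ch(\Q)\to\ich(\Q)$ sends lax symmetric monoidal functors and their monoidal natural transformations to the corresponding $\infty$-categorical data, and as every comparison map is a quasi-isomorphism (so the lax structures are strong by the Künneth theorem over $\Q$), this zig-zag descends to an equivalence $\D^*(-)_\Q\simeq S^*(-,\Q)$ of symmetric monoidal $\infty$-functors.

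Applying the duality $\infty$-functor $(-)^\vee$ to this equivalence yields $\D_*(-)_\Q\simeq (\D^*(-)_\Q)^\vee\simeq(S^*(-,\Q))^\vee$ as symmetric monoidal $\infty$-functors on $\inn(\Sm_\C)$, and it remains to identify $(S^*(-,\Q))^\vee$ with $S_*(-,\Q)$. Since $S^*(U,\Q)=S_*(U,\Q)^\vee$, the evaluation map $S_*(U,\Q)\to S_*(U,\Q)^{\vee\vee}=(S^*(U,\Q))^\vee$ is the natural comparison; it is a quasi-isomorphism precisely because the homology of $U$ is of finite type for $U$ a scheme of finite type, and it is compatible with the lax monoidal (Eilenberg–Zilber) structure on $S_*(-,\Q)$ and its double dual. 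Hence it defines an equivalence $S_*(-,\Q)\simeq(S^*(-,\Q))^\vee$ of symmetric monoidal $\infty$-functors, and composing the two equivalences gives $\D_*(-)_\Q\simeq S_*(-,\Q)$.

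The main obstacle is the monoidal coherence in the cochain comparison: \cite{Na} supplies only an objectwise quasi-isomorphism, whereas here I need it upgraded to a symmetric monoidal equivalence of $\infty$-functors. What makes this tractable, rather than forcing one to construct higher coherence data by hand, is that all intermediate models—the Godement resolution, the Thom–Whitney simple functor, and the singular cochain sheaf—carry strict lax symmetric monoidal structures and are linked by cup-product-preserving maps; one then only has to localize a strict zig-zag. The finiteness hypothesis on the schemes enters solely through the double-dual identification in the final step.
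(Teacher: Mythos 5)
Your overall architecture (compare $\D^*(-)_\Q$ with cochains by a strict multiplicative zig-zag, then dualize using finiteness of homology) is reasonable in outline, but the central step fails as stated. You claim a zig-zag of \emph{strictly lax symmetric monoidal} natural transformations linking the Thom--Whitney model $\mathbf{R}_{\TW}\Gamma(X,\mathbf{R}_{\TW}j_*\underline{\Q}_U)$ to $S^*(U,\Q)$ through the sheafified singular cochain complex. The Thom--Whitney side is a strictly commutative algebra, but singular cochains with the Alexander--Whitney cup/cross product are only commutative up to (coherent) homotopy: the cross product $S^*(X)\otimes S^*(Y)\to S^*(X\times Y)$ does not commute with the symmetry on the nose, so $S^*(-,\Q)$ with this structure is lax monoidal but \emph{not} lax symmetric monoidal, and no strictly multiplicative, strictly symmetric comparison map between a cdga model and singular cochains exists --- this is precisely the classical commutative cochains problem that forces one to use $\Aa_{PL}^*$ or $E_\infty$ machinery in the first place. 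Consequently the zig-zag you describe cannot be promoted to $\ich(\Q)$ simply by ``localizing a strict zig-zag''; the higher coherence data you hoped to avoid is exactly what is missing. (Replacing $S^*$ by $\Aa_{PL}^*$ fixes the symmetry on the cochain side --- this comparison is Navarro's Th\'eor\`eme 5.5, which the paper does use elsewhere --- but then the identification of $(\Aa_{PL}^*)^\vee$ with $S_*(-,\Q)$ as symmetric monoidal $\infty$-functors is itself a nontrivial $E_\infty$-level statement, so the gap merely moves.)

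The paper avoids constructing any explicit comparison. It embeds $\inn(\cat{Man})$ into the symmetric monoidal presentable $\infty$-category $\icat{T}$ of hypercomplete, $\mathbb{R}$-invariant presheaves of spaces, shows $\icat{T}\simeq\icat{S}$ as symmetric monoidal presentable $\infty$-categories, and uses that $\icat{S}$ is the \emph{initial} such category: there is, up to equivalence, a unique colimit-preserving symmetric monoidal functor $\icat{T}\to\ich(\Q)$. Both $S_*(-,\Q)$ and $\D_*(-)_\Q$ satisfy hyperdescent and homotopy invariance and hence extend to such functors, so they are automatically equivalent. If you want to salvage your strategy, you would need to either (a) carry out the cochain comparison in the $\infty$-category of $E_\infty$-algebras (where $S^*$ and the Thom--Whitney model can both be compared, e.g.\ by an acyclic-models or descent argument), or (b) adopt a uniqueness-by-universal-property argument of the kind the paper gives; the strict zig-zag as written is not available.
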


\begin{proof}
We introduce the category $\cat{Man}$ of smooth real manifolds. We consider the $\infty$-category $\icat{PSh}(\cat{Man})$ of presheaves of spaces on the $\infty$-category $\inn(\cat{Man})$. This is a symmetric monoidal $\infty$-category under the product. We can consider the reflective subcategory $\icat{T}$ spanned by presheaves $\Gg$ satisfying the following two conditions:
\begin{enumerate}
\item Given a hypercover $U_\bullet \to M$ of a manifold $M$, the induced map
\[\Gg(M)\to\on{lim}_{\Delta}\Gg(U_\bullet)\]
is an equivalence.
\item For any manifold $M$, the map $\Gg(M)\to \Gg(M\times\mathbb{R})$ induced by the projection $M\times\mathbb{R}\to M$ is an equivalence. 
\end{enumerate}
The presheaves satisfying these conditions are stable under product, hence the $\infty$-category $\icat{T}$ inherits the structure of a symmetric monoidal locally presentable $\infty$-category. It has a universal property that we now describe.

Given another symmetric monoidal locally presentable $\infty$-category $\icat{D}$, we denote by $\Fun^{L,\otimes}(\icat{T},\icat{D})$ the $\infty$-category of colimit preserving strong symmetric monoidal functors $\icat{T}\to\icat{D}$. Then, we can consider the composition
\[\Fun^{L,\otimes}(\icat{T},\icat{D})\to\Fun^{L,\otimes}(\icat{PSh}(\cat{Man}),\icat{D})\to\Fun^{\otimes}(\inn\cat{Man},\icat{D})\]
where the first map is induced by precomposition with the left adjoint to the inclusion $\icat{T}\to \icat{PSh}(\cat{Man})$ and the second map is induced by precomposition with the Yoneda embedding. We claim that the above composition is fully faithful and that its essential image is the full subcategory of $\Fun^{\otimes}(\inn\cat{Man},\icat{D})$ spanned by the functors $F$ that satisfy the following two properties:
\begin{enumerate}
\item Given a hypercover $U_\bullet\to M$ of a manifold $M$, the map
\[\on{colim}_{\Delta\op}F(U_\bullet)\to F(M)\]
is an equivalence.
\item For any manifold $M$, the map $F(M\times\mathbb{R})\to F(M)$ induced by the projection $M\times\mathbb{R}\to M$ is an equivalence. 
\end{enumerate}
This statement can be deduced from the theory of localizations of symmetric monoidal $\infty$-categories (see \cite[Section 3]{hinichdwyer}).

In particular, there exists an essentially unique strong symmetric monoidal and colimit preserving functor from $\icat{T}$ to $\icat{S}$ (the $\infty$-category of spaces) that is determined by the fact that it sends a manifold $M$ to the simplicial set $\on{Sing}(M)$. This functor is an equivalence of $\infty$-categories. This is a floklore result. A proof of a model category version of this fact can be found in \cite[Proposition 8.3.]{duggeruniversal}.

The $\infty$-category $\icat{S}$ is the unit of the symmetric monoidal $\infty$-category of presentable $\infty$-categories. It follows that it has a commutative algebra structure (which corresponds to the symmetric monoidal structure coming from the cartesian product) and that it is the initial symmetric monoidal presentable $\infty$-category. Since $\icat{T}$ is equivalent to $\icat{S}$ as a symmetric monoidal presentable $\infty$-category, we deduce that, up to equivalence, there is a unique functor $\icat{T}\lra\ich(\Q)$ that is strong symmetric monoidal and colimit preserving. But, using the universal property of $\icat{T}$, we easily see that $S_*(-,\Q)$ and $\D_*(-)_\Q$ can be extended to strong symmetric monoidal and colimits preserving functors from $\icat{T}$ to $\ich(\Q)$. It follows that they must be equivalent.
\end{proof}

\subsection{For varieties}

In this subsection, we extend the construction of the previous subsection to the category of varieties.

We have the site $(\Var_{\C})_{pro}$ of varieties over $\C$ with  the proper topology and the site $(\Sm_\C)_{pro}$ which is the restriction of this site to the category of smooth varieties (see \cite[Section 3.5]{blanctopological} for the definition of the proper topology).

\begin{prop}[Blanc]\label{prop:Blanc}
Let $\icat{C}$ be a symmetric monoidal presentable $\infty$-category. We denote by $\Fun^{\otimes}_{pro,}(\Var_\C,\icat{C})$ the $\infty$-category of strong symmetric monoidal functors from $\Var_\C$ to $\icat{C}$ whose underlying functor satisfies descent with respect to proper hypercovers. Similarly, we denote by $\Fun^{\otimes}_{pro}(\Sm_\C,\icat{C})$ the $\infty$-category of strong symmetric monoidal functors from $\Sm_\C$ to $\icat{C}$ whose underlying functor satisfies descent with respect to proper hypercovers. The restriction functor
\[\Fun^{\otimes}_{pro}(\Var_\C,\icat{C})\lra \Fun^{\otimes}_{pro}(\Sm_\C,\icat{C}) \]
is an equivalence.
\end{prop}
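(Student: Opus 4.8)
The plan is to deduce this from a general principle: smooth schemes form a basis for the proper topology on $\Sch_\C$, in the sense that every scheme admits a proper hypercover by smooth schemes (this is the content of the resolution of singularities results of Hironaka, together with the de Jong alterations or the standard simplicial resolution arguments available over $\C$). Once this is known, the equivalence should follow from the universal property of the hypercomplete localization of a site, adapted to the symmetric monoidal setting. Concretely, both sides can be identified with symmetric monoidal colimit-preserving functors out of an appropriate symmetric monoidal localization of presheaves, and the inclusion of the smooth site into the full site induces an equivalence of these localized $\infty$-topoi precisely because smooth schemes generate under proper hypercovers.

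The key steps, in order, would be as follows. First I would pass from the $1$-sites to the associated $\infty$-categories of sheaves: let $\icat{Sh}_{pro}(\Sch_\C)$ and $\icat{Sh}_{pro}(\Sm_\C)$ denote the hypercomplete sheaves of spaces for the proper topology on each site. The restriction functor on presheaves, followed by sheafification, gives a geometric morphism, and the central claim is that restriction along $\inn(\Sm_\C)\hookrightarrow\inn(\Sch_\C)$ induces an equivalence $\icat{Sh}_{pro}(\Sch_\C)\xrightarrow{\sim}\icat{Sh}_{pro}(\Sm_\C)$. Second, I would invoke the same mechanism as in the proof of Theorem \ref{theo: equivalence cochains-sheaf}, namely the theory of localizations of symmetric monoidal $\infty$-categories from \cite{hinichdwyer}: a symmetric monoidal functor out of $\Sch_\C$ satisfying proper descent is the same datum as a symmetric monoidal colimit-preserving functor out of $\icat{Sh}_{pro}(\Sch_\C)$, and likewise for $\Sm_\C$. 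Combining these two identifications with the equivalence of sheaf $\infty$-categories yields the desired equivalence of functor categories, since precomposition with an equivalence of symmetric monoidal presentable $\infty$-categories induces an equivalence on $\Fun^{L,\otimes}(-,\icat{C})$.

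The main obstacle I anticipate is the comparison of the two sheaf $\infty$-categories, i.e. showing that the smooth site is genuinely a basis for the proper topology in the hypercomplete sense. This requires two things: that every object of $\Sch_\C$ receives a proper hypercover whose terms all lie in $\Sm_\C$, and that the proper topology restricted to $\Sm_\C$ induces on $\icat{Sh}_{pro}(\Sm_\C)$ exactly the localization needed to match $\icat{Sh}_{pro}(\Sch_\C)$. The existence of smooth proper hypercovers is where resolution of singularities enters, and one must check that the Čech/hypercover nerves can be taken functorially enough to produce a comparison of sites rather than merely an objectwise statement. The symmetric monoidal bookkeeping is comparatively routine once the underlying equivalence of presentable $\infty$-categories is established, since the product topology is compatible with the proper topology and sheafification is symmetric monoidal; so the real work is this geometric input on the existence and functoriality of smooth hypercovers.
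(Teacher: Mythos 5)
Your proposal follows essentially the same route as the paper: identify both functor categories with $\Fun^{L,\otimes}$ out of the hypercomplete proper-topology sheaf $\infty$-topoi (via the localization theory of symmetric monoidal $\infty$-categories, exactly as in the proof of Theorem \ref{theo: equivalence cochains-sheaf}), and then transport the desired equivalence along an equivalence of those topoi. The only difference is that the paper obtains the equivalence of sheaf $\infty$-categories by directly citing Blanc's Quillen equivalence between the local model structures on simplicial presheaves over $\Sm_\C$ and $\Sch_\C$ (\cite[Proposition 3.22]{blanctopological}), rather than re-deriving it from the existence of smooth proper hypercovers as you sketch; what you identify as the ``main obstacle'' is precisely the content of that cited result.
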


\begin{proof}
We have the categories $\Fun(\Var_\C\op,\cat{sSet})$ and $\Fun(\Sm_\C\op,\cat{sSet})$ of presheaves of simplicial sets over $\Var_{\C}$ and $\Sm_\C$ respectively. These categories are related by an adjunction
\[\pi^*:\Fun(\Sm_\C\op,\cat{sSet})\leftrightarrows \Fun(\Var_\C\op,\cat{sSet}):\pi_*\]
where the right adjoint $\pi_*$ is just the restriction of a presheaf to smooth varieties. Both sides of this adjunction have a symmetric monoidal structure by taking objectwise product. The functor $\pi_*$ is obviously strong symmetric monoidal. We can equip both sides with the local model structure with respect to the proper topology. We obtain a Quillen adjunction
\[\pi^*:\Fun_{pro}(\Sm_\C\op,\cat{sSet})\leftrightarrows \Fun_{pro}(\Var_\C\op,\cat{sSet}):\pi_*\]
between symmetric monoidal model categories in which the right adjoint is a strong symmetric monoidal functor. In \cite[Proposition 3.22]{blanctopological}, it is proved that this is a Quillen equivalence. The model category $\Fun_{pro}(\Sm_\C\op,\cat{sSet})$ presents the $\infty$-topos of hypercomplete sheaves over the proper site on $\Sm_\C$ and similarly for model category $\Fun_{pro}(\Var_\C\op,\cat{sSet})$. Therefore, this Quillen equivalence implies that these two $\infty$-topoi are equivalent. Moreover, as in the proof of \ref{theo: equivalence cochains-sheaf}, these topoi, seen as symmetric monoidal presentable $\infty$-categories under the cartesian product, represent the functor  $\icat{C}\mapsto \Fun^{\otimes}_{pro}(\Sm_\C,\icat{C})$ (resp. $\icat{C}\mapsto \Fun^{\otimes}_{pro}(\Var_\C,\icat{C})$). The result immediately follows.
\end{proof}

\begin{theo}
Up to weak equivalences, there is a unique strong symmetric monoidal functor
\[\D_*:\inn(\Var_\C)\lra \iMHC_\Q\]
which satisfies descent with respect to proper hypercovers and whose restriction to $\Sm_\C$ is equivalent to the functor $\D_*$ constructed in the previous subsection.

There is also a unique strong symmetric monoidal functor
\[\D^*:\inn(\Var_\C)\op\lra \iMHC_\Q\]
which satisfies descent with respect to proper hypercovers and whose restriction to $\Sm_\C$ is equivalent to the functor $\D^*$ constructed in the previous subsection.
\end{theo}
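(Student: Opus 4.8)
The plan is to reduce both statements to Blanc's Proposition~\ref{prop:Blanc}. Its hypotheses are met except for one point: the target must be a symmetric monoidal \emph{presentable} $\infty$-category, whereas $\iMHC_\Q\simeq\ich^b(\MHS_\Q)$ (Theorem~\ref{theo: equivinfty}) is essentially small. I would therefore first pass to the Ind-completion $\icat{C}:=\on{Ind}(\iMHC_\Q)$. Because the tensor product of mixed Hodge complexes is exact in each variable, $\icat C$ is a symmetric monoidal presentable stable $\infty$-category whose tensor preserves colimits separately in each variable, and the inclusion $\iota\colon\iMHC_\Q\hookrightarrow\icat C$ is a fully faithful symmetric monoidal functor identifying $\iMHC_\Q$ with the compact objects of $\icat C$.

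Next I would check that $\iota\circ\D_*\colon\inn(\Sm_\C)\to\icat C$ satisfies proper hypercover descent, so that it lies in $\Fun^\otimes_{pro}(\Sm_\C,\icat C)$. The forgetful functor $\iMHC_\Q\to\ich^b(\Q)$ is exact and conservative, since weak equivalences of mixed Hodge complexes are by definition detected on the underlying $\Q$-complexes; its Ind-extension $U\colon\icat C\to\ich(\Q)$ is then conservative and preserves all colimits. Hence descent for $\iota\circ\D_*$ can be verified after applying $U$, where by Theorem~\ref{theo: equivalence cochains-sheaf} it becomes descent for $S_*(-,\Q)$, i.e.\ the classical cohomological descent for proper hypercovers.

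With this in hand, Proposition~\ref{prop:Blanc} applied to $\icat C$ produces an essentially unique symmetric monoidal functor $\widetilde{\D}_*\colon\inn(\Sch_\C)\to\icat C$ satisfying proper descent and restricting to $\iota\circ\D_*$. It remains to see that $\widetilde{\D}_*$ lands in $\iMHC_\Q$. For $X\in\Sch_\C$, combining descent with Theorem~\ref{theo: equivalence cochains-sheaf} gives $U\widetilde{\D}_*(X)\simeq S_*(X,\Q)$, which has bounded, degreewise finite-dimensional homology because $X$ is of finite type. Since $\iMHC_\Q\simeq\ich^b(\MHS_\Q)$ is exactly the subcategory of compact objects of $\icat C$ — the objects with bounded, finite-type homology — this finiteness forces $\widetilde{\D}_*(X)\in\iMHC_\Q$; full faithfulness of $\iota$ then yields the desired $\D_*\colon\inn(\Sch_\C)\to\iMHC_\Q$, unique up to equivalence. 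The contravariant functor $\D^*$ is obtained by composing $\D_*$ with the symmetric monoidal duality equivalence $\iMHC_\Q\xrightarrow{\sim}\iMHC_\Q\op$ of the previous subsection (every object being dualizable by Remark~\ref{Homandtensor}), which exchanges the colimits defining covariant descent with the limits defining contravariant descent and restricts to the constructed $\D^*$ on $\Sm_\C$.

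I expect the crux to be the second and last steps: arranging the Ind-completion so that conservativity and colimit-preservation of the underlying-complex functor genuinely reduce the descent verification to the known cohomological descent for $S_*(-,\Q)$, and then correctly identifying $\iMHC_\Q$ with the compact objects of $\icat C$ in order to conclude that the abstract extension takes values in honest finite-type mixed Hodge complexes.
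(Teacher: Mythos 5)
Your proof follows the paper's argument essentially step for step: Ind-complete $\iMHC_\Q$, verify proper-hypercover descent for $\D_*$ on $\Sm_\C$ by comparison with $S_*(-,\Q)$ via Theorem \ref{theo: equivalence cochains-sheaf} and the forgetful functor, invoke Proposition \ref{prop:Blanc} for the unique symmetric monoidal extension, use finiteness of homology to see that the extension lands back in $\iMHC_\Q$, and obtain $\D^*$ by composing with the duality equivalence. The one step to treat with care is your claim that the Ind-extension $U:\on{Ind}(\iMHC_\Q)\to\ich(\Q)$ is conservative \emph{because} the forgetful functor on $\iMHC_\Q$ is: the Ind-extension of a conservative exact functor need not be conservative in general (for instance, $H\Z\wedge-$ is conservative on finite spectra but not on all spectra). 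The paper circumvents this by first observing that $U\bigl(\on{colim}_{\Delta\op}\D_*(X_\bullet)\bigr)\simeq S_*(Y,\Q)$ has finite-type homology, so that the colimit already lies in $\iMHC_\Q$ rather than merely in its Ind-completion, and only then applying conservativity of $U$ on $\iMHC_\Q$ itself; your own compactness observation, which you deploy later to show the extension over $\Sch_\C$ takes values in $\iMHC_\Q$, is exactly what is needed to repair this step as well.
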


\begin{proof}
Let $\on{Ind}(\iMHC_\Q)$ be the Ind-category of the $\infty$-category of mixed Hodge complexes. This is a stable presentable $\infty$-category. We first prove that the composite
\[\D_*:\inn(\Sm_\C)\lra \iMHC_\Q\lra \on{Ind}(\iMHC_\Q)\]
satisfies descent with respect to proper hypercovers. Let $Y$ be a smooth variety and $X_\bullet\to Y$ be a hypercover for the proper topology. We wish to prove that the map
\[\alpha:\on{colim}_{\Delta\op} \D_*(X_\bullet)\lra \D_*(Y)\]
is an equivalence in $\on{Ind}(\iMHC_\Q)$. By \cite[Proposition 3.24]{blanctopological} and the fact that taking singular chains commutes with homotopy colimits in spaces, we see that the map
\[\beta:\on{colim}_{\Delta\op}S_*(X_\bullet,\Q)\lra S_*(Y,\Q)\]
is an equivalence. On the other hand, writing $\ich(\Q)^{\omega}$ for the $\infty$-category of chain complexes whose homology is finite dimensional, the forgetful functor
\[U:\on{Ind}(\iMHC_\Q)\lra \on{Ind}(\ich(\Q)^{\omega})\simeq \ich(\Q)\]
preserves colimits and by Theorem \ref{theo: equivalence cochains-sheaf}, the composite $U\circ\D_*$ is weakly equivalent to $S_*(-,\Q)$. Therefore, the map $\beta$ is weakly equivalent to the map $U(\alpha)$ in particular, we deduce that the source of $\alpha$ is in $\iMHC_\Q$ (as opposed to $\on{Ind}(\iMHC_\Q)$). 
And since the functor $U:\iMHC_\Q\to \ich(\C)$ is conservative, it follows that $\alpha$ is an equivalence as desired. 

Hence, by Proposition \ref{prop:Blanc}, there is a unique extension of $\D_*$ to a strong symmetric monoidal functor $\inn(\Var_\C)\lra\on{Ind}(\iMHC_\Q)$ that has proper descent. Moreover, by the first paragraph of this proof, if $Y$ is an object of $\Var_\C$ and $X_\bullet\lra Y$ is a proper hypercover by smooth varieties, then $\on{colim}_{\Delta\op}\D_*(X_\bullet,\Q)$ has finitely generated homology. It follows that this unique extension of $\D_*$ to $\Var_\C$ lands in $\iMHC_\Q\subset \on{Ind}(\iMHC_\Q)$.

For the case of $\D^*$, we know from Proposition \ref{prop : duality is monoidal} that dualization induces a strong symmetric monoidal equivalence of $\infty$-categories $\iMHC_\Q\op\simeq \iMHC_\Q$ (we emphasize that, as a functor, dualization is only lax symmetric monoidal but as an $\infty$-functor it is strong symmetric monoidal). Thus, we see that we have no other choice but to define $\D^*$ as the composite
\[\inn(\Var)\op \xrightarrow{(\D_*)\op}\iMHC_\Q\op\xrightarrow{(-)^{\vee}}\iMHC_\Q\]
and this will be the unique strong symmetric monoidal functor 
\[\D^*:\inn(\Var_\C)\op\lra \iMHC_\Q\]
which satisfies descent with respect to proper hypercovers and whose restriction to $\Sm_\C$ is equivalent to the functor $\D^*$ constructed in the previous subsection.
\end{proof}

\begin{prop}\label{prop:equivalence D S}
\begin{enumerate}
\item There is a weak equivalence $\D_*(-)_\Q\simeq S_*(-,\Q)$ in the category of strong symmetric monoidal $\infty$-functors $\inn(\Var_\C)\lra \ich(\Q)$. 
\item There is a weak equivalence $\Aa_{PL}^*(-)\simeq\D^*(-)_\Q\simeq S^*(-,\Q)$ in the $\infty$-category of strong symmetric monoidal $\infty$-functors $\inn(\Var_\C)\op\lra \ich(\Q)$.
\end{enumerate}
\end{prop}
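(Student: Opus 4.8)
The plan is to reduce both statements to the smooth case treated in Theorem~\ref{theo: equivalence cochains-sheaf}, propagating the comparison from smooth schemes to all schemes by the proper-descent equivalence of Proposition~\ref{prop:Blanc}, and then deducing the contravariant statement by duality together with the classical comparison of piecewise-linear forms with singular cochains.

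\textbf{Part (1).} First I would record that both $\D_*(-)_\Q$ and $S_*(-,\Q)$ are symmetric monoidal $\infty$-functors $\inn(\Sch_\C)\lra\ich(\Q)$ satisfying descent for proper hypercovers. For $\D_*$ this holds by construction: the preceding theorem produces $\D_*\colon\inn(\Sch_\C)\to\iMHC_\Q$ with proper descent, and postcomposition with the colimit-preserving forgetful functor $\iMHC_\Q\to\ich(\Q)$ transports this descent into $\ich(\Q)$. For $S_*(-,\Q)$, proper descent follows from \cite[Proposition 3.24]{blanctopological} together with the fact that singular chains commute with homotopy colimits of spaces, exactly as in the proof of the previous theorem. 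By Theorem~\ref{theo: equivalence cochains-sheaf} the two functors have equivalent restrictions to $\inn(\Sm_\C)$. I would then apply Proposition~\ref{prop:Blanc} with $\icat{C}=\ich(\Q)$, which is symmetric monoidal and presentable: the restriction functor $\Fun^{\otimes}_{pro}(\Sch_\C,\ich(\Q))\to\Fun^{\otimes}_{pro}(\Sm_\C,\ich(\Q))$ is an equivalence, hence in particular fully faithful, so the equivalence of the restrictions lifts to the desired equivalence $\D_*(-)_\Q\simeq S_*(-,\Q)$ over all of $\inn(\Sch_\C)$.

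\textbf{Part (2).} The equivalence $\D^*(-)_\Q\simeq S^*(-,\Q)$ I would obtain by dualizing Part (1). By definition $\D^*$ is $(\D_*)\op$ followed by the duality equivalence $(-)^{\vee}\colon\iMHC_\Q\op\simeq\iMHC_\Q$, and since the forgetful functor intertwines this duality with linear duality on $\ich(\Q)$, one has $\D^*(-)_\Q\simeq(\D_*(-)_\Q)^{\vee}$ as symmetric monoidal $\infty$-functors on $\inn(\Sch_\C)\op$, where the right-hand side means $X\mapsto(\D_*(X)_\Q)^{\vee}$. Because every complex scheme has cohomology of finite type, linear duality restricts to a symmetric monoidal equivalence $(\ich(\Q)^{\omega})\op\simeq\ich(\Q)^{\omega}$ carrying $S_*(-,\Q)$ to $S^*(-,\Q)$; applying it to Part (1) yields $\D^*(-)_\Q\simeq S^*(-,\Q)$. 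It then remains to identify $\Aa_{PL}^*(-)$ with $S^*(-,\Q)$. This is the classical comparison between Sullivan's functor of piecewise-linear forms and singular cochains, valid as an equivalence of lax symmetric monoidal $\infty$-functors on spaces; restricting along the functor sending a scheme to its underlying space gives the statement. Alternatively, since $\Aa_{PL}^*$ also satisfies hypercover descent and homotopy invariance, one may run the universal-property argument of Theorem~\ref{theo: equivalence cochains-sheaf} to identify it with $S^*(-,\Q)$ directly.

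The genuinely substantive inputs --- the smooth comparison, proper descent for each functor, and the finiteness needed for duality --- are already available from the preceding results, so the proof is largely a matter of assembly. The one point I expect to require care is the bookkeeping in Part~(2): one must verify that the forgetful functor $\iMHC_\Q\to\ich(\Q)$ is compatible with the two duality functors not merely on underlying objects but as symmetric monoidal $\infty$-functors, so that dualizing Part~(1) indeed lands on $\D^*(-)_\Q$ with its monoidal structure. I anticipate this to follow formally from the construction of the duality on $\iMHC_\Q$ recalled before the statement, so the difficulty is organizational rather than essential.
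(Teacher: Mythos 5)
Your proposal follows the paper's own argument essentially verbatim: Part (1) via proper descent for both functors, the smooth comparison of Theorem \ref{theo: equivalence cochains-sheaf}, and the full faithfulness in Proposition \ref{prop:Blanc}; Part (2) via linear duality being strong monoidal on complexes with finite-type homology, plus the classical identification of $\Aa_{PL}^*$ with $S^*(-,\Q)$. The extra care you flag about compatibility of the two duality functors is a reasonable refinement but does not change the route.
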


\begin{proof}
We prove the first claim. By construction $\D_*(-)_\Q$ is a symmetric monoidal functor that satisfies proper descent. By \cite[Proposition 3.24]{blanctopological}, the same is true for $S_*(-,\Q)$. Since these two functors are moreover weakly equivalent when restricted to $\Sm_\C$, they are equivalent by Proposition \ref{prop:Blanc}. 

The linear dual functor is  strong symmetric monoidal when restricted to chain complexes whose homology is of finite type. Moreover, both $S_*(-,\Q)$ and $\D_*(-)_\Q$ land in the $\infty$-category of such chain complexes. Therefore, the equivalence $S^*(-,\Q)\simeq \D^*(-)_{\Q}$ follows from the first part. The equivalence $\Aa_{PL}^*(-)\simeq S^*(-,\Q)$ is classical.
\end{proof}

\section{Formality of the singular chains functor}\label{Section_main}

In this section, we prove the main results of the paper on the formality of the singular chains functor. 
We also explain some applications to operad formality.

\begin{defi}
Let $X$ be a complex variety and let $\alpha$ be a rational number. We say that the weight filtration on $H^*(X,\Q)$ is \textit{$\alpha$-pure}
if for all $n\geq 0$ we have
$$Gr^W_pH^n(X,\Q)=0\text{ for all }p\neq \alpha n.$$
\end{defi}

\begin{rem}
Note that since the weight filtration on $H^n(-,\Q)$ has weights in the interval $[0,2n]\cap \Z$, the above definition makes 
sense only for $\alpha\in[0, 2]\cap \Q$.
For $\alpha=1$ we recover the purity property shared by the cohomology of smooth projective varieties.
A very simple example of a variety whose filtration is $\alpha$-pure, with $\alpha$ not integer,
is given by $\C^2\setminus\{0\}$. Its reduced cohomology is concentrated in degree $3$ and weight $4$,
so its weight filtration is $4/3$-pure. 
We refer to Proposition $\ref{prop: purity for subspace arrangements}$ in the following section for 
more elaborate examples.
\end{rem}

Here is our main theorem.

\begin{theo}\label{theo: main covariant}
Let $\alpha$ be a non-zero rational number. The singular chains functor \[S_*(-,\Q):\Var_\C\lra \ch(\Q)\]
is formal as a lax symmetric monoidal functor when restricted to varieties whose weight filtration in cohomology is $\alpha$-pure.
\end{theo}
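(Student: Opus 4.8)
The plan is to assemble the pieces developed in the previous sections into a single chain of formal equivalences, reducing the geometric statement to the purely algebraic formality result of Corollary \ref{coro-purity formality Q}. First I would use Proposition \ref{prop:equivalence D S}(1) to replace the functor of interest: it suffices to prove that $\D_*(-)_\Q \colon \inn(\Sch_\C) \lra \ich(\Q)$ is formal as a lax symmetric monoidal $\infty$-functor, since it is weakly equivalent to $S_*(-,\Q)$. By Corollary \ref{coro: Hinich rigidification}, formality at the $\infty$-categorical level suffices to deduce formality of the original $1$-functor $S_*(-,\Q) \colon \Sch_\C \lra \ch(\Q)$, so the entire argument can be carried out after passing to $\infty$-categories, where the monoidal structure on $\D_*$ actually exists.

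Next I would factor $\D_*(-)_\Q$ through the category of mixed Hodge complexes. By construction, $\D_*$ lands in $\iMHC_\Q$, and by Theorem \ref{theo: equivinfty} the shift functor gives a symmetric monoidal equivalence $\ich^b(\MHS_\Q) \simeq \iMHC_\Q$. Composing $\D_*$ with the inverse of this equivalence produces a lax symmetric monoidal $\infty$-functor valued in $\ich^b(\MHS_\Q)$, whose further composite with the forgetful functor $\Pi_\Q \colon \ch(\MHS_\Q) \lra \ch(\Q)$ recovers $\D_*(-)_\Q$ up to weak equivalence. Here I must track the decalage carefully: under the shift $\Tt$, a complex with $\alpha$-pure weight filtration in homology (in the sense of $Gr^W_p H^n = 0$ for $p \neq \alpha n$ on cohomology) corresponds, after the weight re-indexing of Definition \ref{defMHC} and the decalage, to a complex lying in the subcategory $\ch(\MHS_\Q)^{\alpha\text{-}pure}$ of Corollary \ref{coro-purity formality Q}. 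The $\alpha$-purity hypothesis on $H^*(X,\Q)$ is precisely what guarantees that the image of every $X$ in the pure range lands in this subcategory, so that Corollary \ref{coro-purity formality Q} applies.

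Finally, Corollary \ref{coro-purity formality Q} asserts that $\Pi_\Q$ restricted to $\ch(\MHS_\Q)^{\alpha\text{-}pure}$ is formal as a lax symmetric monoidal functor. Since our scheme-level functor factors through this restricted category for all $\alpha$-pure schemes, the composite is formal, and chasing the equivalences back through Theorem \ref{theo: equivinfty}, Proposition \ref{prop:equivalence D S}(1), and Corollary \ref{coro: Hinich rigidification} yields formality of $S_*(-,\Q)$ as claimed. The main obstacle, I expect, is the bookkeeping of the weight and grading conventions: one must verify that the cohomological $\alpha$-purity of $H^*(X,\Q)$ translates, through the dualization defining $\D_*$ from $\D^*$, through the shift $\Tt$ and its decalage, and through the homological regrading of Definition \ref{defpure}, into exactly the $\alpha$-purity condition that Corollary \ref{coro-purity formality Q} requires. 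A secondary subtlety is ensuring that restricting the source from all of $\Sch_\C$ to the full subcategory of $\alpha$-pure schemes is compatible with the symmetric monoidal structure, i.e. that this subcategory is closed under products; this follows from the Künneth formula together with the fact that a tensor product of $\alpha$-pure mixed Hodge structures of the appropriate weights remains $\alpha$-pure.
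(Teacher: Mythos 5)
Your proposal is correct and follows essentially the same route as the paper's own proof: reduce to $\infty$-categorical formality via Corollary \ref{coro: Hinich rigidification}, replace $S_*(-,\Q)$ by $\D_*(-)_\Q$ via Proposition \ref{prop:equivalence D S}, factor through $\ich(\MHS_\Q)^{\alpha\text{-}pure}$ using the inverse of the equivalence of Theorem \ref{theo: equivinfty}, and conclude with Corollary \ref{coro-purity formality Q}. The bookkeeping points you flag (the shift/d\'ecalage and closure of the pure subcategory under products) are real but routine, and the paper handles them implicitly.
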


\begin{proof}
By Corollary \ref{coro: Hinich rigidification}, it suffices to prove that this functor is formal as an $\infty$-lax symmetric monoidal functor. By Proposition \ref{prop:equivalence D S}, it is equivalent to prove that $\D_*(-)_{\Q}$ is formal. We denote by $\bar{\D}_*$ the composite of $\D_*$ with a strong symmetric monoidal inverse of the equivalence of Theorem \ref{theo: equivinfty}. Because of that theorem, $\D_*(-)_\Q$ is weakly equivalent to $\Pi_\Q\circ \bar{\D}_*$. The restriction of  $\bar{\D}_*$  to $\Var_\C^{\alpha\text{-}pure}$ lands in $\ich(\MHS_\Q)^{\alpha\text{-}pure}$, the full subcategory of $\ich(\MHS_\Q)$ spanned by chain complexes whose homology is $\alpha$-pure. 
By Corollary \ref{coro-purity formality Q}, the $\infty$-functor $\Pi_\Q$ from $\ich(\MHS_\Q)^{\alpha\text{-}pure}$ to $\ich(\Q)$ is formal and hence so is $\Pi_\Q\circ \bar{\D}_*$.
\end{proof}

We now list a few applications of this result.

\subsection{Noncommutative little disks operad}\label{subsection: ncld}

The authors of \cite{dotsenkononcommutative} introduce two nonsymmetric topological operads
$\mathcal{A}s_{S^1}$ and $\mathcal{A}s_{S^1}\rtimes S^1$.
In each arity, these operads are given by a product of copies of $\C-\{0\}$ and the operad maps can be checked to be algebraic maps. It follows that the operads $\mathcal{A}s_{S^1}$ and $\mathcal{A}s_{S^1}\rtimes S^1$ are operads in the category $\on{Sm}_\C$ and that the
weight filtration on their cohomology is $2$-pure. Therefore, by \ref{theo: main covariant} we have the following result.

\begin{theo}\label{theo: little}
The operads $S_*(\mathcal{A}s_{S^1},\Q)$ and $S_*(\mathcal{A}s_{S^1}\rtimes S^1,\Q)$ are formal.
\end{theo}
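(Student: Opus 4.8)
The plan is to apply Theorem \ref{theo: main covariant} directly, so the real content of the proof is verifying its two hypotheses for the operads $\mathcal{A}s_{S^1}$ and $\mathcal{A}s_{S^1}\rtimes S^1$: namely that each lives in the category $\Sm_\C$ of smooth complex schemes (aritywise, and compatibly with the operad structure maps), and that the weight filtration on the cohomology of each arity space is $\alpha$-pure for a fixed non-zero rational $\alpha$. Since the statement asserts formality of the operad $S_*(-,\Q)$ applied to these objects, once both hypotheses hold, Theorem \ref{theo: main covariant} tells us $S_*(-,\Q)$ is a formal lax symmetric monoidal functor on the relevant subcategory, and by the Proposition of \cite{santosmoduli} (quoted in Section \ref{Section_formal_functors}) a formal lax symmetric monoidal functor sends operads to formal operads. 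This yields the desired conclusion.

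First I would recall from \cite{dotsenkononcommutative} the explicit description of the arity spaces. Each arity of $\mathcal{A}s_{S^1}$ and of $\mathcal{A}s_{S^1}\rtimes S^1$ is, by construction, a finite product of copies of $\C\setminus\{0\}$ (possibly with some factors being a point). I would note that $\C\setminus\{0\}$ is a smooth affine complex variety, hence a smooth complex scheme, and that finite products of smooth schemes are smooth; thus every arity space is an object of $\Sm_\C$. Next I would check that the operadic composition maps and the symmetric-group (or $S^1$-semidirect) structure maps are morphisms of schemes rather than merely continuous maps. This is the point the excerpt flags with ``the operad maps can be checked to be algebraic maps'': the composition maps are expressed by polynomial (or rational, everywhere-defined) formulas in the coordinates of the $\C\setminus\{0\}$ factors, so they are algebraic. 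Consequently $\mathcal{A}s_{S^1}$ and $\mathcal{A}s_{S^1}\rtimes S^1$ are genuinely operads internal to $\Sm_\C$.

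Then I would verify $\alpha$-purity with $\alpha=2$. The key input is the classical computation that $H^*(\C\setminus\{0\},\Q)$ has $H^0$ pure of weight $0$ and $H^1$ pure of weight $2$ (the generator being, up to scalar, $\tfrac{dz}{2\pi i z}$, which is a logarithmic form); in other words the weight filtration on $\C\setminus\{0\}$ is $2$-pure. By the Künneth formula, which is compatible with the weight filtration, a finite product of copies of $\C\setminus\{0\}$ again has $2$-pure cohomology: $H^n$ of the product is pure of weight $2n$ since tensoring Hodge structures adds weights and the weight of $H^1(\C\setminus\{0\})$ is exactly twice its cohomological degree. Hence every arity space is $2$-pure, placing both operads inside the subcategory $\Sch_\C^{2\text{-}pure}$ to which Theorem \ref{theo: main covariant} applies.

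The main obstacle is not conceptual but verificatory: confirming that the structure maps of these operads are algebraic (regular) morphisms of schemes, as opposed to merely smooth or continuous maps. The purity computation is essentially formal once one knows the building block $\C\setminus\{0\}$ and invokes Künneth-compatibility of weights, and membership in $\Sm_\C$ is immediate for the spaces themselves. It is the algebraicity of composition and of the $S^1$-action in the semidirect product case that must be inspected against the concrete formulas of \cite{dotsenkononcommutative}; once that is in hand, the formality is a direct corollary of Theorem \ref{theo: main covariant} and the functoriality noted in Section \ref{Section_formal_functors}.
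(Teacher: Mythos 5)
Your proposal follows exactly the paper's route: identify the arity spaces as products of copies of $\C\setminus\{0\}$ lying in $\Sm_\C$ with algebraic structure maps, observe $2$-purity of the weight filtration, and conclude by Theorem \ref{theo: main covariant} together with the fact that formal lax symmetric monoidal functors send operads to formal operads. The only cosmetic remark is that these are \emph{nonsymmetric} operads, so there are no symmetric-group structure maps to check; otherwise your argument matches the paper's.
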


\begin{rem}
The fact that the operad $S_*(\mathcal{A}s_{S^1},\Q)$ is formal is proved in \cite[Proposition 7]{dotsenkononcommutative} by a more elementary method and it is true even with integral coefficients. The other formality result was however unknown to the authors of \cite{dotsenkononcommutative}.
\end{rem}

\subsection{Self-maps of the projective line}\label{subsection : self maps}

We denote by $F_d$ the algebraic variety of degree $d$ algebraic maps from $\mathbf{P}^1_{\C}$ to itself that send the point $\infty$ to the point $1$. Explicitly, a point in $F_d$ is a pair $(f,g)$ of degree $d$ monic polynomials without any common roots. Sending a monic polynomial to its set of coefficients, we may see the variety $F_d$ as a Zariski open subset of $\mathbf{A}^{2d}_{\C}$. See \cite[Section 5]{horelmotivic} for more details.

\begin{prop}The weight filtration on $H^*(F_d,\Q)$ is $2$-pure. 
\end{prop}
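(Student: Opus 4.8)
The plan is to compute the cohomology of $F_d$ together with its weight filtration explicitly and verify $2$-purity directly, exploiting the fact that $F_d$ is a complement of an explicit hypersurface in affine space. Recall that $F_d\subset\mathbf{A}^{2d}_\C$ is the locus of pairs $(f,g)$ of monic polynomials of degree $d$ with no common root; its complement is the resultant hypersurface $\{R(f,g)=0\}$. A clean strategy is to identify $F_d$ with a more tractable space. Following \cite{horelmotivic} (Section 5), I would use the known homotopy-theoretic description of the space of degree $d$ rational self-maps: the complex points of $F_d$ form a model for a component of the double loop space of $S^2$ in a range, but for the purpose of the weight filtration the cleanest input is the fibration or stratification description. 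In fact the most direct route is to observe that $F_d$ is, up to the relevant invariants, a successive complement and to compute $H^*(F_d,\Q)$ as an exterior/free structure.

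First I would recall or re-derive the rational cohomology ring of $F_d$. The key geometric fact is that $F_d$ admits a cell structure whose cohomology is concentrated in degrees that match a pure Tate pattern: one expects $H^*(F_d,\Q)$ to be an exterior algebra (or a truncation thereof) on generators in odd degree, each of which is of Hodge–Tate type with weight exactly twice its degree. Concretely, a generator in degree $2k-1$ should have weight $2k$, so that for a class in $H^n$ the weight is $n+1$ when $n$ is odd — but $2$-purity requires weight $2n$. Hence the real content is to show that the cohomology is supported only in the degrees where $n+1=2n$ forces nothing, i.e. that every nonzero cohomology class in degree $n$ actually carries weight $2n$. The correct formulation: since $F_d$ is smooth, each $H^n(F_d,\Q)$ has weights in $[n,2n]$, and $2$-purity is the assertion that all weights equal $2n$, i.e. $H^n$ is pure of the top possible weight $2n$ (Hodge–Tate of type $(n,n)$).

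The main step, and the place I expect the real work to be, is establishing this top-weight purity. The cleanest mechanism is to exhibit $F_d$ as a complement $\mathbf{A}^{2d}\setminus Z$ where $Z$ is the resultant divisor, and to use the Gysin/residue description of the cohomology of a smooth variety that is a hyperplane-arrangement-like complement: for complements of arrangements (and more generally for varieties whose cohomology is generated by logarithmic forms with poles along divisors meeting suitably), each generator of $H^n$ is represented by a product of $n$ logarithmic one-forms $d\log$, each contributing weight $2$, so the total weight is $2n$. This is exactly the mechanism making complements of hyperplane arrangements $2$-pure, as already noted in the introduction and in Section~\ref{Section_formalschemes}. Thus I would argue that the resultant complement $F_d$ has its cohomology generated multiplicatively by weight-$2$ logarithmic classes, whence $2$-purity follows formally: a product of $n$ classes each of weight $2$ lies in $W_{2n}$, and smoothness bounds weights below by $n$, while the logarithmic representative forces the weight to be exactly $2n$.

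The anticipated obstacle is verifying that $H^*(F_d,\Q)$ is indeed generated by such weight-$2$ logarithmic classes, since the resultant divisor is singular and not a normal crossings arrangement, so the clean hyperplane-arrangement argument does not apply verbatim. To circumvent this I would instead invoke the explicit computation of $H^*(F_d,\Q)$ available from the scanning/group-completion description of spaces of rational maps (the Segal and Cohen–Cohen–Mann–Milgram type results, as referenced in \cite{horelmotivic}), which identifies $H^*(F_d,\Q)$ with a known graded vector space, and then match this with the weight-graded pieces computed via a smooth compactification with normal crossings boundary. The degrees obtained from the topological computation, compared against the weight bounds from mixed Hodge theory for a smooth variety, should pin down each $H^n$ as pure of weight $2n$, since the only weight compatible with both the smoothness lower bound and the Hodge–Tate structure of these generators is the top weight. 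Once the abstract pattern ``cohomology concentrated with each degree-$n$ class of weight $2n$'' is confirmed, $2$-purity is immediate from the definition.
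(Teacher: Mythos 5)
Your proposal does not close the argument: neither of the two mechanisms you offer actually determines the weights. You correctly note that the logarithmic-forms argument for arrangement complements does not apply verbatim because the resultant divisor is singular and far from normal crossings; but your fallback --- compute $H^*(F_d,\Q)$ topologically via Segal/Cohen--Cohen--Mann--Milgram and then ``pin down'' the weights using the bounds valid for smooth varieties --- is circular. Knowing the Betti numbers of a smooth variety together with the lower bound $W_{n-1}H^n=0$ does not force top weight: $H^1$ of a smooth curve can be pure of weight $1$ (a projective curve) or of weight $2$ (as for $\C^*$), and your phrase ``the Hodge--Tate structure of these generators'' assumes exactly what is to be proved. (One could salvage the weight in degree one by observing that a smooth compactification of the rational variety $F_d$ has vanishing $H^1$, so $W_1H^1(F_d)=0$; but you neither say this nor establish the vanishing of $H^{\ge 2}(F_d,\Q)$ that such an argument would also require.)

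The paper's proof uses a different and much more economical mechanism, and this is the idea your proposal is missing. Writing a pair $(f,g)$ in terms of the roots of $f$ and of $g$, the variety $F_d$ is the quotient of the complement of a hyperplane arrangement in $\mathbf{A}^{2d}_{\C}$ (the hyperplanes where a root of $f$ equals a root of $g$) by the finite group $\Sigma_d\times\Sigma_d$ permuting the roots. A transfer argument identifies $H^k(F_d,\Q)$ with a subspace of $H^k$ of that arrangement complement, compatibly with mixed Hodge structures, and hyperplane arrangement complements are $2$-pure (the $d=1$ case of Proposition \ref{prop: purity for subspace arrangements}). Since a sub-mixed Hodge structure of a pure Hodge structure of weight $2k$ is again pure of weight $2k$, the result follows with no cohomology computation at all. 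If you wish to keep your explicit-computation route, you must supply an independent determination of the weights, for instance via an actual normal crossings compactification or via the quotient description above.
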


\begin{proof}
The variety $F_d$ is denoted $\on{Poly}_1^{d,2}$ in \cite[Definition 1.1.]{farbtopology}. It is explained in Step 4 of the proof of Theorem 1.2 in that paper, that the variety $F_d$ is the quotient of the complement of a hyperplane arrangement $H$ in $\mathbf{A}^{2d}_{\C}$ by the group $\Sigma_d\times\Sigma_d$ acting by permuting the coordinates. The quotient map
\[\pi:\mathbf{A}^{2d}_{\C}-H\to F_d\]
is algebraic and thus induces a morphism of mixed Hodge structures $\pi^*:H^*(F_d,\Q)\to H^*(\mathbf{A}^{2d}_{\C}-H,\Q)$. Moreover, it is classical that $\pi^*$ is injective (see e.g. \cite[Theorem III.2.4]{Bredon}). Since the mixed Hodge structure of $H^k(\mathbf{A}^{2d}_{\C}-H,\Q)$ is pure of weight $2k$ (by Proposition \ref{prop: purity for subspace arrangements} or by \cite{kimweights}), the desired result follows.
\end{proof}

In \cite[Proposition 3.1.]{cazanavealgebraic}, Cazanave shows that the variety $\bigsqcup_dF_d$ has the structure of a graded monoid in $\on{Sm}_\C$. The structure of a graded monoid can be encoded by a colored operad. Thus the following result follows from Theorem \ref{theo: main covariant}.

\begin{theo}\label{theo: monoidformal}
The graded monoid in chain complexes $\bigoplus_dS_*(F_d,\Q)$ is formal.
\end{theo}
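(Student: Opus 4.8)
The plan is to verify that the graded monoid $\bigsqcup_d F_d$ fits into the framework of Theorem \ref{theo: main covariant} and then simply apply that theorem. The key conceptual point is that a graded monoid in a symmetric monoidal category $\Cc$ is precisely an algebra over a suitable colored operad: the colors are indexed by the natural numbers (recording the grading $d$), and the multimorphisms encode the multiplication $F_d\times F_{d'}\to F_{d+d'}$ together with the unit. This is exactly the kind of algebraic structure that Theorem \ref{theo: main covariant} is designed to handle, since that theorem, via the formalism of colored operads underlying symmetric monoidal categories, sends any such structure internal to $\Sch_\C^{\alpha\text{-}pure}$ to a formal structure in $\ch(\Q)$.

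Concretely, I would proceed in three steps. First, I would invoke Cazanave's result \cite[Proposition 3.1.]{cazanavealgebraic} that $\bigsqcup_d F_d$ is a graded monoid in $\Sm_\C$, so that in particular the multiplication and unit maps are morphisms of complex schemes, and $\Sm_\C \subset \Sch_\C$. Second, I would cite the preceding Proposition, which establishes that the weight filtration on $H^*(F_d,\Q)$ is $2$-pure for every $d$; hence each $F_d$ lies in the full subcategory of $\Sch_\C$ on which the hypothesis of Theorem \ref{theo: main covariant} holds, taking $\alpha = 2$. Third, I would apply Theorem \ref{theo: main covariant} with $\alpha = 2$: the singular chains functor $S_*(-,\Q)$ is formal as a lax symmetric monoidal functor when restricted to this subcategory, and by the operadic reformulation it therefore carries the graded monoid $\bigsqcup_d F_d$ to a formal graded monoid, namely $\bigoplus_d S_*(F_d,\Q)$.

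The only genuine content to check is that the grading poses no obstruction, i.e. that a graded monoid is indeed encoded by a colored operad and that formality of the functor passes to this structure. This is entirely parallel to the operad case discussed in the introduction and in \cite[Proposition 2.5.5]{santosmoduli}: formality is a property of the lax symmetric monoidal functor itself, so it is transported to \emph{any} algebraic structure encoded by a colored operad, graded monoids included. I would therefore expect no real obstacle — the main (and essentially only) thing to confirm is that all the structure maps of Cazanave's graded monoid are algebraic, which is already implicit in the statement that $\bigsqcup_d F_d$ is a monoid object in $\Sm_\C$ rather than merely in topological spaces.

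Thus the proof reduces to assembling these three inputs, and I would write it as a short deduction: the purity Proposition places each $F_d$ in the pure subcategory, Cazanave's Proposition supplies the graded monoid structure over $\Sch_\C$, and Theorem \ref{theo: main covariant} then yields formality directly.
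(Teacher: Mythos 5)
Your proposal matches the paper's proof exactly: the paper likewise cites Cazanave's result that $\bigsqcup_d F_d$ is a graded monoid in $\Sm_\C$, the preceding proposition for $2$-purity of $H^*(F_d,\Q)$, observes that a graded monoid is encoded by a colored operad, and concludes by Theorem \ref{theo: main covariant}. No gaps; this is the same argument.
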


\subsection{The little disks operad}\label{subsect: little disks}

In \cite{Petersen}, Petersen shows that the operad of little disks $\oper{D}$ is formal. The method of proof is to use the action of a certain group $\on{GT}(\Q)$ on $S_*(\oper{PAB}_\Q,\Q)$ which follows from work of Drinfeld. Here the operad $\oper{PAB}_\Q$ is rationally equivalent to $\oper{D}$ and $\on{GT}(\Q)$ is the group of $\Q$-points of the pro-algebraic Grothendieck-Teichmüller group. We can reinterpret this proof using the language of mixed Hodge structures. Indeed, the group $\on{GT}$ receives a map from the group $\on{Gal}(\cat{MT}(\mathbb{Z}))$, the Galois group of the Tannakian category of mixed Tate motives over $\mathbb{Z}$ (see \cite[25.9.2.2]{andrebook}). Moreover there is a map $\on{Gal}(\cat{MHTS}_\Q)\to\on{Gal}(\cat{MT}(\mathbb{Z}))$ from the Tannakian Galois group of the abelian category of mixed Hodge Tate structures (the full subcategory of $\cat{MHS}_\Q$ generated under extensions by the Tate twists $\Q(n)$ for all $n$) which is Tannaka dual to the tensor functor
\[\cat{MT}(\Z)\lra \cat{MHTS}_\Q\]
sending a mixed Tate motive to its Hodge realization. This map of Galois group allows us to view $S_*(\oper{PAB}_\Q,\Q)$ as an operad in $\ch(\cat{MHS}_\Q)$ which moreover has a $2$-pure weight filtration (as follows from the computation in \cite{Petersen}). Therefore by Corollary \ref{coro-purity formality Q}, the operad $S_*(\oper{PAB}_\Q,\Q)$ is formal and hence also $S_*(\oper{D},\Q)$.

\subsection{The gravity operad}

In \cite{dupontgravity}, Dupont and the second author prove the formality of the gravity operad of Getzler. It is an operad structure on the collection of graded vector spaces $\{H_{*-1}(\mathcal{M}_{0,n+1}), n\in \mathbb{N}\}$. It can be defined as the homotopy fixed points of the circle action on $S_*(\oper{D},\Q)$. The method of proof in \cite{dupontgravity} can also be interpreted in terms of mixed Hodge structures. Indeed, a model $\oper{G}rav^{W'}$ of gravity is constructed in 2.7 of loc. cit. This model comes with an action of $\on{GT}(\Q)$ and a $\on{GT}(\Q)$-equivariant map $\iota:\oper{G}rav^{W'}\lra S_*(\oper{PAB}_\Q,\Q)$ which is injective on homology. As in the previous subsection, this action of $\on{GT}(\Q)$ allows us to interpret $\oper{G}rav^{W'} $ as an operad in $\ch(\MHS_\Q)$. Moreover, the injectivity of $\iota$ implies that $\oper{G}rav^{W'}$ also has a $2$-pure weight filtration. Therefore by Corollary \ref{coro-purity formality Q}, we deduce the formality of $\oper{G}rav^{W'}$. In fact, we obtain the stronger result that the map
\[\iota:\oper{G}rav^{W'}\lra S_*(\oper{PAB}_\Q,\Q)\]
is formal as a map of operads (i.e. it is connected to the induced map in homology by a zig-zag of maps of operads).

\subsection{$E^1$-formality}
The above results deal with objects whose weight filtration is pure. 
In general, for mixed weights, the singular chains functor is not formal, but it is $E^1$-formal as we now explain.

The $r$-stage of the spectral sequence associated to a filtered complex is an $r$-bigraded complex with differential of bidegree $(-r,r-1)$.
By taking its total degree and considering the column filtration we obtain a filtered complex.
Denote by
$$E^r:\ich{(\Ff\Q)}\lra \ich{(\Ff\Q)}$$
the resulting strong symmetric monoidal $\infty$-functor.
Denote by 
$$\tilde \Pi^{W}_\Q:\iMHC_\Q\lra \ich(\Ff\Q)$$ the forgetful functor 
defined by sending a mixed Hodge complex to its rational component together with the weight filtration.
Note that, since the weight spectral sequence of a mixed Hodge complex degenerates at the second stage, 
the homology of $E^1\circ \tilde \Pi^{W}_\Q$ gives the weight filtration on the homology of 
mixed Hodge complexes. We have:

\begin{theo}\label{E1formality}
Denote by $S^{fil}_*:\inn (\Var_\C)\lra \ich(\Q)$ the composite functor
$$\inn(\Var_\C)\xrightarrow{\D_*}\iMHC_\Q\xrightarrow{\tilde \Pi^{W}_\Q}\ich(\Ff\Q).$$
There is an equivalence of strong symmetric monoidal $\infty$-functors
$E^1\circ S_*^{fil}\simeq S^{fil}_*.$
\end{theo}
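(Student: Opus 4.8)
The plan is to strip away the geometry first and reduce to a purely algebraic comparison of forgetful functors on mixed Hodge complexes. Since $S^{fil}_*=\tilde\Pi^W_\Q\circ\D_*$ with $\D_*$ symmetric monoidal, it suffices to prove that the two symmetric monoidal $\infty$-functors $\tilde\Pi^W_\Q$ and $E^1\circ\tilde\Pi^W_\Q$ from $\iMHC_\Q$ to $\ich(\Ff\Q)$ are weakly equivalent, and then precompose with $\D_*$. Using the equivalence $\Tt\colon\ich^b(\MHS_\Q)\xrightarrow{\sim}\iMHC_\Q$ of Theorem \ref{theo: equivinfty}, I would further precompose with $\Tt$ and work with $\tilde\Pi^W_\Q\circ\Tt$, which sends a bounded complex of mixed Hodge structures $K$ to its underlying rational complex equipped with the shifted weight filtration $TW$ of Section \ref{Section_MHC}. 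The statement then becomes: for every $K\in\ich^b(\MHS_\Q)$ the filtered complex $(K,TW)$ is naturally and symmetric-monoidally filtered-quasi-isomorphic to the reassembly of its own first weight spectral sequence page.

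The next step extracts the structural input. The differential of a complex of mixed Hodge structures is a morphism of mixed Hodge structures, hence strictly compatible with $W$; therefore $Gr^W$ is exact on such complexes, giving $Gr^W_mH_n(K)\cong H_n(Gr^W_mK)$, i.e. the internal weight spectral sequence degenerates at $E_1$. Passing from $W$ to the shifted filtration $TW$ is an inverse décalage, which shifts the spectral sequence index by one (compare \cite[1.3.4]{DeHII}); consequently the weight spectral sequence of $(K,TW)$ has $d^0=0$ and degenerates at $E_2$. I would then record two facts: the underlying complex of $E^1((K,TW))$ is $(Gr^{TW}K,d^1)$; and both $(K,TW)$ and $E^1((K,TW))$ have the same associated graded for the column filtration, namely $Gr^{TW}K$ with zero differential, carry the same differential $d^1$, and have vanishing higher differentials. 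In other words, as objects of the filtered derived category the two filtered complexes carry identical associated-graded and $d^1$ data, and no further gluing data because of the $E_2$-degeneration.

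It remains to turn this coincidence of invariants into an actual natural monoidal equivalence, and this is where the real work lies. The tool is the monoidal splitting of the weight filtration over $\Q$ provided by Lemma \ref{Deligne_splitting_over_Q} (which rests on the descent Theorem \ref{descens_rsplittings}): it trivializes the associated graded functorially and compatibly with tensor products, while the $E_2$-degeneration guarantees that the only remaining gluing datum is $d^1$, which is precisely what the $E^1$-reassembly records. \emph{The main obstacle is that the object-wise splitting is not a map of complexes} — it does not intertwine the differential $d$ of $(K,TW)$ with the differential $d^1$ of the reassembly — so it must be corrected using the degeneration, and, more importantly, the correction has to be natural in $K$ and compatible with the symmetric monoidal structure. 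Rather than producing such a correction by an explicit homotopy, I would phrase it $\infty$-categorically: both functors are symmetric monoidal and agree after passing to associated graded and $d^1$, so the comparison can be assembled using the rectification and universal-property techniques already employed in Corollary \ref{coro: Hinich rigidification} and Section \ref{Section_logaritmic}. Arranging this monoidal compatibility is the delicate point; once it is in place, precomposition with $\D_*$ yields the desired equivalence $E^1\circ S^{fil}_*\simeq S^{fil}_*$.
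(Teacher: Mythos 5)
Your reduction is the paper's own: precompose with the equivalence $\Tt$ of Theorem \ref{theo: equivinfty}, observe that on $\ch^b(\MHS_\Q)$ the differential is a morphism of mixed Hodge structures and hence strictly compatible with $W$, so that the induced differential on $Gr_{TW}K$ vanishes and $E^1(K,TW)$ is just $Gr_{TW}K$ with differential $Gr^W(d)$ (equivalently, $E^1\circ T = T\circ E^0$ after reindexing), and then invoke the rational splitting of Lemma \ref{Deligne_splitting_over_Q}. Up to that point you match the paper's argument step for step.

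The gap is in your final step, and it stems from a misdiagnosed obstacle. You assert that the objectwise splitting ``is not a map of complexes'' and must be ``corrected'', and you propose to produce the correction by the rectification and universal-property techniques of Corollary \ref{coro: Hinich rigidification} and Section \ref{Section_logaritmic}. That is not a proof: Hinich's theorem compares strict and $\infty$-categorical lax monoidal structures on a fixed pair of functors, and the universal property in Section \ref{Section_logaritmic} concerns sheaves on manifolds and presentable $\infty$-categories; neither upgrades ``same associated graded and same $d^1$'' to an equivalence of functors. In fact no correction is needed. Lemma \ref{Deligne_splitting_over_Q} provides a \emph{monoidal natural isomorphism} $U^{fil}\circ gr\circ \Pi^{W}_\Q\cong \Pi^{W}_\Q$ of functors on $\MHS_\Q$; since the differentials of an object of $\ch(\MHS_\Q)$ are themselves morphisms of mixed Hodge structures, applying this natural isomorphism levelwise automatically intertwines $d$ with $Gr^W(d)$ --- naturality is precisely the statement that no off-diagonal (weight-shifting) components survive. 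This yields a monoidal isomorphism $E^0\circ\Pi^{W}_\Q\cong\Pi^{W}_\Q$ of strict lax symmetric monoidal functors on $\ch(\MHS_\Q)$, hence of $\infty$-functors after localization, and combining with $E^1\circ T = T\circ E^0$ and the equivalence $\Tt$ closes the argument. The difficulty you describe is genuine for a general filtered complex (for instance the rational component of a mixed Hodge complex, where the differential is only filtered and the weight spectral sequence degenerates at $E_2$ rather than $E_1$); routing through $\Tt$ is exactly what eliminates it, and your proof needs to say this rather than appeal to unspecified $\infty$-categorical gluing.
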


\begin{proof}
It suffices to prove an equivalence $\tilde \Pi^{W}_\Q\simeq E^1\circ \tilde \Pi^{W}_\Q$.
We have a commutative diagram of strong symmetric monoidal $\infty$-functors.
$$
\xymatrix{
\ich(\MHS_\Q)\ar[d]_{\Pi^{W}_\Q}\ar[r]^{\Tt}&\iMHC_\Q\ar[d]^{\tilde \Pi^{W}_\Q}\\
\ich(\Ff\Q)\ar[r]^{T}\ar[d]_{E^0}&\ich(\Ff\Q)\ar[d]^{E^1}\\
\ich(\Ff\Q)\ar[r]^{T}&\ich(gr\Q)
}
$$
The commutativity of the top square follows from the definition of $\Tt$. We prove that the bottom square commutes.
Recall that $T(K,W)$ is the filtered complex $(K,TW)$ 
defined by $TW^pK_n:=W^{p+n}K_n$. It satisfies $d(TW^pK_p)\subset TW^{p+1}K_{n-1}$. In particular, the induced differential on 
$Gr_{TW}K$ is trivial. Therefore we have:
$$E^1_{-p,q}(K,TW)\cong H_{q-p}(Gr^{p}_{TW}K)\cong Gr^{p}_{TW}K_{q-p}=Gr^q_WK_{q-p}=E^0_{-q,2q-p}(K,W).$$
This proves that the above diagram commutes.

Since $\Tt$ is an equivalence of $\infty$-categories, it is enough to prove that $E^1\circ \tilde \Pi^{W}_\Q\circ \Tt$ is equivalent 
to $\tilde  \Pi^{W}_\Q\circ \Tt$. By the commutation of the above diagram it suffices to prove that there is an equivalence $E^0\circ \Pi^{W}_\Q\cong \Pi^{W}_\Q$. This follows from Lemma \ref{Deligne_splitting_over_Q},
since $E^0=U^{fil}\circ gr$.
\end{proof}

\section{Rational homotopy of varieties and formality}\label{Section_formalschemes}

For $X$ a space, we denote by $\Aa_{PL}^*(X)$, Sullivan's algebra of piecewise linear differential forms.
This is a commutative dg-algebra over $\Q$ that captures the rational homotopy type of $X$.
A contravariant version of Theorem $\ref{theo: main covariant}$ gives:

\begin{theo}\label{theo: main contravariant}
Let $\alpha$ be a non-zero rational number. The functor
\[\Aa_{PL}^*:\Var_\C\op\lra \ch(\Q)\]
is formal as a lax symmetric monoidal functor when restricted to varieties whose weight filtration in cohomology is $\alpha$-pure.
\end{theo}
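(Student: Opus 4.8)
The plan is to mirror the proof of Theorem \ref{theo: main covariant}, replacing the chain-level functor $\D_*$ by its cochain-level counterpart $\D^*$. By Corollary \ref{coro: Hinich rigidification}, applied to the small symmetric monoidal category $\Sch_\C\op$, it suffices to prove that $\Aa_{PL}^*$ is formal as a lax symmetric monoidal $\infty$-functor $\inn(\Sch_\C)\op\lra\ich(\Q)$. By Proposition \ref{prop:equivalence D S}(2), the functor $\Aa_{PL}^*(-)$ is weakly equivalent to $\D^*(-)_\Q$ as symmetric monoidal $\infty$-functors, so I would reduce to showing that $\D^*(-)_\Q$ is formal.

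Next I would factor $\D^*$ through mixed Hodge structures exactly as in the covariant case. Composing $\D^*\colon\inn(\Sch_\C)\op\lra\iMHC_\Q$ with a symmetric monoidal inverse of Beilinson's equivalence $\ich^b(\MHS_\Q)\lra\iMHC_\Q$ from Theorem \ref{theo: equivinfty} yields a symmetric monoidal $\infty$-functor $\bar{\D}^*\colon\inn(\Sch_\C)\op\lra\ich^b(\MHS_\Q)$, and by construction $\D^*(-)_\Q\simeq\Pi_\Q\circ\bar{\D}^*$. The homology of $\bar{\D}^*(X)$ recovers Deligne's mixed Hodge structure on $H^*(X,\Q)$, re-indexed through the shift functor $\Tt$.

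I would then verify that, when restricted to the full subcategory of schemes whose weight filtration in cohomology is $\alpha$-pure, the functor $\bar{\D}^*$ lands in $\ich(\MHS_\Q)^{\beta\text{-}pure}$ for a non-zero rational $\beta$ determined by $\alpha$ and the fixed grading conventions. With this in hand, Corollary \ref{coro-purity formality Q} shows that $\Pi_\Q$ restricted to $\ich(\MHS_\Q)^{\beta\text{-}pure}$ is formal, and hence so is $\Pi_\Q\circ\bar{\D}^*\simeq\D^*(-)_\Q\simeq\Aa_{PL}^*$, which completes the argument.

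The step I expect to be the main obstacle is purely one of bookkeeping: tracking the re-indexing implicit in having $\D^*$ land in the homologically graded $\iMHC_\Q$, together with the shift $\Tt$, so as to confirm that the cohomological $\alpha$-purity hypothesis really does place $\bar{\D}^*(X)$ inside a pure subcategory $\ich(\MHS_\Q)^{\beta\text{-}pure}$ with $\beta\neq 0$. Since Corollary \ref{coro-purity formality Q} applies to every non-zero rational weight, the precise value of $\beta$ is immaterial, which removes any sensitivity to sign and indexing conventions. Alternatively, one can bypass this translation entirely and deduce the statement directly from Theorem \ref{theo: main covariant} by duality: since $\D^*=(-)^\vee\circ(\D_*)\op$ and the dualization $\iMHC_\Q\op\simeq\iMHC_\Q$ is a symmetric monoidal equivalence commuting with homology on complexes of finite type, formality of $\D_*(-)_\Q$ transports to formality of $\D^*(-)_\Q$, and hence of $\Aa_{PL}^*$.
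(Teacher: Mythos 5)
Your proposal is correct and follows essentially the same route as the paper: the paper's proof simply says to repeat the argument for Theorem \ref{theo: main covariant} with $\D^*$ in place of $\D_*$, invoking the equivalence $\Aa_{PL}^*\simeq\D^*(-)_\Q$, which is exactly your main line of reasoning. Your observation that the precise purity weight $\beta$ is immaterial because Corollary \ref{coro-purity formality Q} holds for every non-zero rational weight is a sensible way to dispose of the indexing issue the paper leaves implicit.
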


\begin{proof}
The proof is the same as the proof of Theorem $\ref{theo: main covariant}$ using $\D^*$ instead of $\D_*$ and using the fact that $\D^*(-)_\Q$ is quasi-isomorphic to $\Aa_{PL}^*$ as a lax symmetric monoidal functor (see \cite[Th\'{e}or\`{e}me 5.5]{Na}).
\end{proof}

Recall that a topological space $X$ is said to be \textit{formal} if there is a string of quasi-isomorphisms of 
commutative dg-algebras from $\Aa_{PL}^*(X)$ to $H^*(X,\Q)$, where $H^*(X,\Q)$ is considered as a commutative dg-algebra with trivial differential.
Likewise, a continuous map of topological spaces $f:X\lra Y$ is \textit{formal} if there is a string of homotopy commutative diagrams of morphisms
$$
\xymatrix{
\Aa_{PL}^*(Y)\ar[d]^{f^*}&\ar[l]\ast\ar[d]&\ar[l]\cdots\ar[r]&\ast\ar[d]\ar[r]&H^*(Y,\Q)\ar[d]^{H^*(f)}\\
\Aa_{PL}^*(X)&\ar[l]\ast&\cdots\ar[l]\ar[r]&\ast\ar[r]&H^*(X,\Q)
}
$$
where the horizontal arrows are quasi-isomorphisms. 
Note that if $f:X\to Y$ is a map of topological spaces
and $X$ and $Y$ are both formal spaces, then it is not always true that $f$ is a formal map. 
Also, in general, the composition of formal morphisms is not formal.

Theorem $\ref{theo: main contravariant}$
gives functorial formality for varieties with pure weight filtration in cohomology,
generalizing both ``purity implies formality'' statements appearing in \cite{Dupont} for smooth varieties
and in \cite{ChCi1} for singular projective varieties. We also get a result of partial formality as 
done in these references, via Proposition \ref{decomposition_qformal}. 
Our generalization is threefold, as explained in the following three subsections.

\subsection{Rational purity}
To our knowledge, in the existing references where $\alpha$-purity of the weight filtration is discussed,
only the cases $\alpha=1$ and $\alpha=2$ are considered, whereas we obtain formality for varieties with $\alpha$-pure cohomology,
for $\alpha$ an arbitrary non-zero rational number. We now show that certain complement of subspaces arrangements give examples of such varieties.

\begin{defi}
Let $V$ be a finite dimensional $\C$-vector space. We say that a finite set $\{H_i\}_{i\in I}$ of subspaces of $V$ is a \textit{good arrangement of codimension $d$ subspaces} if 
\begin{enumerate}
\item [(i)]For each $i \in I$, the subspace $H_i$ is of codimension $d$.
\item [(ii)]For each $i\in I$, the set of subspaces $\{H_i\cap H_j\}_{j\neq i}$ of $H_i$ form a good arrangement of codimension $d$ subspaces.
\end{enumerate}
\end{defi}

\begin{rem}
In particular the empty set of subspaces is a good arrangement of codimension $d$ subspaces. By induction on the size of $I$, we see that this condition is well-defined.
\end{rem}

\begin{example}
Recall that a set of subspaces of codimension $d$ of an $n$-dimensional space is said to be in general position if the intersection of $k$ of those subspaces is of codimension $\on{min}(n,dk)$. One easily checks that a set of codimensions $d$ subspaces in general position is a good arrangement. However, the converse does not hold as shown in the following example.
\end{example}

\begin{example}
Take $V=(\C^d)^m$ and define, for $(i,j)$ an unordered pair of distinct elements in $\{1,\ldots,m\}$, the subspace
\[W_{(i,j)}=\{(x_1,\ldots,x_n)\in (\C^d)^m, x_i=x_j\}.\]
This collection of codimension $d$ subspaces of $V$ is a good arrangement. However, these subspaces are not in general position if $m$ is at least $3$. Indeed, the codimension of $W_{(1,2)}\cap W_{(1,3)}\cap W_{(2,3)}$ is $2d$. The complement $V-\bigcup_{(i,j)}W_{(i,j)}$ is exactly $F_m(\C^d)$, the space of configurations of $m$ points in $\C^d$.
\end{example}

\begin{prop}\label{prop: purity for subspace arrangements}
Let $H=\{H_1,\ldots,H_k\}$ be a good arrangement of codimension $d$ subspaces of $\C^n$. Then the mixed Hodge structure on  $H^*(\C^n-\cup_i H_i,\Q)$  is pure of weight $2d/(2d-1)$.
\end{prop}

\begin{proof}
We proceed by induction on $k$. This is obvious for $k=0$. Now, we consider the variety $X=\C^n-\cup_i^{k-1} H_i$, It contains an open subvariety $U=\C^n-\cup_i^{k} H_i$ and its closed complement $Z=H_k-\cup_i^{k-1} H_i\cap H_k$ which has codimension $d$. Therefore the purity long exact sequence on cohomology groups has the form
\[\ldots \lra H^{r-2d}(Z)(-d)\lra H^r(X)\lra H^r(U)\lra H^{r+1-2d}(Z)(-d)\lra \ldots\]
By the induction hypothesis, the Hodge structure on $H^{r+1-2d}(Z)(-d)$ and on $H^r(X)$ are pure of weight $2dr/(2d-1)$ and hence it is also the case for $H^r(U)$ as desired.
\end{proof}

\begin{rem}
This proposition is well-known for $d=1$ and is proved for instance in \cite{kimweights}. Note that the space $F_m(\C^d)$ of configurations of $m$ points in $\C^d$ fits in the above proposition, so we 
recover formality of these spaces by purely Hodge theory arguments.
\end{rem}

\subsection{Functoriality}Every morphism of smooth complex projective varieties is formal. However, if $f:X\to Y$ is an algebraic morphism of complex varieties (possibly singular and/or non-projective), and both $X$ and $Y$ are formal, the morphism $f$ need not be formal.

\begin{example}
Consider the algebraic  Hopf fibration
$f:\C^2\setminus\{0\}\lra \mathbf{P}^1_{\C}$ defined by $(x_0,x_1)\mapsto [x_0:x_1]$.
Both spaces $\C^2\setminus\{0\}\simeq S^3$ and $\mathbf{P}^1_{\C}\simeq S^2$ are formal. The morphism induced by $f$ in cohomology is trivial in all positive degrees. Therefore, if $f$ were formal, this would mean that $f$ is nullhomotopic. However, it is well-known that $f$ generates the one dimensional vector space $\pi_3(S^2)\otimes\Q$. Note in fact, that $\mathbf{P}^1_{\C}$ has $1$-pure weight filtration while $\C^2\setminus\{0\}$ has $4/3$-pure weight filtration.
\end{example}

Theorem $\ref{theo: main contravariant}$ tells us that if $f:X\lra Y$ is a morphism of algebraic varieties and both $X$ and $Y$ 
have $\alpha$-pure cohomology, with $\alpha$ a non-zero rational number (the same $\alpha$ for $X$ and $Y$), then $f$ is a formal morphism.
This generalizes the formality of holomorphic morphisms between compact K\"{a}hler manifolds of \cite{DGMS}
and enhances the results of \cite{Dupont} and \cite{ChCi1} by providing them with functoriality. In fact, we have:

\begin{prop}
Let $f:X\lra Y$ be a morphism between connected complex varieties. Assume that the weight filtration on the cohomology of $X$ (resp. $Y$) is $\alpha$-pure (resp. $\beta$-pure). Then:
\begin{enumerate}
 \item If $\alpha=\beta$, then $f$ is formal.
 \item If $\alpha\neq\beta$, then $f$ is formal only if it is rationally nullhomotopic.
\end{enumerate}
\end{prop}

\begin{proof}
Let us first give the precise definition that we will use of a rationally nullhomotopic map. We say that a map $g:U\lra V$ between topological spaces is rationally nullhomotopic if the induced map
\[\Aa_{PL}(g):\Aa_{PL}^*(V)\lra \Aa_{PL}^*(U)\]
is equal in the homotopy category of cdga's to a map that factors through the initial cdga $\Q$.

When $\alpha=\beta$, Theorem $\ref{theo: main contravariant}$ ensures that $f$ is formal. 

If $\alpha\neq \beta$, then we claim that $H^*(f)$ is zero in positive degree. Indeed, since $H^*(f)$ is strictly compatible with the weight filtration, it suffices to show that the morphism
$$Gr_{p}^WH^n(Y,\Q)\lra Gr_{p}^WH^n(X,\Q)$$ is trivial for all $p\in\Z$ and all $n>0$ which follows from the purity conditions. Therefore, if $f$ is formal, the map $\Aa_{PL}^*(f)$ coincides with $H^*(f)$ in the homotopy category of cdga's and the latter map factors through $\Q$. 
\end{proof}

\subsection{Non-projective singular varieties}

The following result of formality of non-projective singular complex varieties with pure Hodge structure seems to be a new result. 
\begin{example}
Let $X$ be an irreducible singular projective variety
of dimension $n>0$ with $1$-pure weight filtration in cohomology.
Let $p\in X$ be a smooth point of $X$. Then, we claim that the complement $X-p$ has $1$-pure weight filtration in cohomology.
Indeed, we can consider the long exact sequence of cohomology groups for the pair $(X,X-p)$. 
\[\cdots\to H^{i-1}(X-p)\to H^i(X,X-p)\to H^i(X)\to H^i(X-p)\to H^{i+1}(X,X-p)\to\cdots \]
Since $p$ is a smooth point, there exists a neighborhood $U$ of $p$ that is homeomorphic to $\mathbb{R}^{2n}$, therefore excision gives us an isomorphism
\[H^k(X,X-p)\cong H^k(U,U-p)\]
Since $H^k(U,U-p)$ is non-zero only when $k=2n$, we deduce that the map $H^k(X)\to H^k(X-p)$ is an isomorphism for all $k<2n-1$.
Moreover, since $X$ is irreducible, we have $H^{2n}(X)=\Q$ and this vector space has a generator, the fundamental class, which is in the image of $H^{2n}(X,X-q)\to H^{2n}(X)$ for any smooth point $q$. Together with the above long exact sequence, this implies that $H^{2n-1}(X-p)\cong H^{2n-1}(X)$ and $H^{2n}(X-p)=0$. To summarize, we have proved that the inclusion $X-p\to X$ induces an isomorphism on all cohomology groups except on the top one where $H^{2n}(X)=\Q$ while $H^{2n}(X-p)=0$. This proves that the weight filtration of $X-p$ is $1$-pure. As a consequence, the space $X-p$ is formal and the inclusion $X-p\hookrightarrow X$ is formal.
\end{example}

\subsection{$E_1$-formality}
We also have a contravariant version of Theorem $\ref{E1formality}$.

\begin{theo}\label{E1formality_contravariant}
Denote by $\Aa_{fil}^*:\inn(\Var_\C)\op\lra \ich(\Ff\Q)$ the composite functor
$$\inn(\Var_\C)\op\xrightarrow{\D^*}\iMHC_\Q\xrightarrow{\tilde \Pi^{W}_\Q}\ich(\Ff\Q).$$
Then 
\begin{enumerate}
\item The lax symmetric monoidal $\infty$-functors $\Aa_{fil}^*$ and $E_1\circ \Aa_{fil}^*$ are weakly equivalent.

\item  Let $U:\ch(\Ff\Q)\lra \ch(\Q)$ denote the forgetful functor. 
The lax symmetric monoidal $\infty$-functor
$U\circ E_1\circ \Aa_{fil}^*:\inn(\Var_\C)\op\to \ich(\Q)$ is weakly equivalent to Sullivan's functor $\Aa_{PL}^*$ of piecewise linear forms. 

\item The lax symmetric monoidal functor
$U\circ E_1\circ \Aa_{fil}^*:\Sm_\C\op\to \ch(\Q)$ is weakly equivalent to Sullivan's functor $\Aa_{PL}^*$ of piecewise linear forms. 
\end{enumerate}
\end{theo}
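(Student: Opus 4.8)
The plan is to follow, in the dual setting, the argument already carried out for Theorem \ref{E1formality}. For part (1), I observe that $\Aa_{fil}^*=\tilde{\Pi}^W_\Q\circ\D^*$, so it suffices to produce an equivalence $\tilde{\Pi}^W_\Q\simeq E_1\circ\tilde{\Pi}^W_\Q$ of symmetric monoidal $\infty$-functors out of $\iMHC_\Q$ and then precompose with $\D^*$. But this equivalence is exactly what is established in the proof of Theorem \ref{E1formality}: the commutative diagram of symmetric monoidal $\infty$-functors appearing there reduces the claim, via the equivalence of categories $\Tt$, to the existence of an equivalence $E^0\circ\Pi^W_\Q\cong\Pi^W_\Q$, which holds by Lemma \ref{Deligne_splitting_over_Q} since $E^0=U^{fil}\circ gr$. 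As that reduction is insensitive to whether one precomposes with $\D_*$ or with $\D^*$, precomposing with $\D^*$ immediately yields $\Aa_{fil}^*\simeq E_1\circ\Aa_{fil}^*$.

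For part (2), I would apply the forgetful functor $U$ to the equivalence of part (1), obtaining $U\circ\Aa_{fil}^*\simeq U\circ E_1\circ\Aa_{fil}^*$ as symmetric monoidal $\infty$-functors $\inn(\Sch_\C)\op\to\ich(\Q)$. Since $U\circ\tilde{\Pi}^W_\Q$ is simply the functor forgetting the weight filtration, one has $U\circ\Aa_{fil}^*=\D^*(-)_\Q$. By Proposition \ref{prop:equivalence D S}(2) we have $\D^*(-)_\Q\simeq\Aa_{PL}^*$ as symmetric monoidal $\infty$-functors, and splicing the two equivalences gives $U\circ E_1\circ\Aa_{fil}^*\simeq\Aa_{PL}^*$, as claimed.

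For part (3), the task is to rigidify the $\infty$-categorical equivalence of part (2) to a strict one over the smaller category $\Sm_\C$. The key point, which is also the reason part (3) is confined to smooth schemes, is that by the construction of Section \ref{Section_logaritmic} the restriction of $\D^*$ to $\Sm_\C\op$ arises from a genuine lax symmetric monoidal functor $\Sm_\C\op\to\MHC_\Q$ at the $1$-categorical level, whereas over all of $\Sch_\C$ it is only defined $\infty$-categorically via descent. Since $\tilde{\Pi}^W_\Q$, the spectral sequence functor $E_1$, and $U$ all admit strict lax symmetric monoidal models, the composite $U\circ E_1\circ\Aa_{fil}^*$ is an honest lax symmetric monoidal functor $\Sm_\C\op\to\ch(\Q)$, as is $\Aa_{PL}^*$. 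Having established in part (2) that these two functors agree as lax symmetric monoidal $\infty$-functors, I would conclude by invoking Hinich's rigidification (Theorem \ref{theo: Hinich rigidification}, applied to the essentially small category $\Sm_\C$) to upgrade the equivalence to a weak equivalence of strict lax symmetric monoidal functors. The main obstacle I anticipate is the verification that $E_1$ genuinely lifts to a strict lax symmetric monoidal endofunctor of $\ch(\Ff\Q)$ compatible with the filtered K\"unneth morphism, so that the composite is strictly lax monoidal and Theorem \ref{theo: Hinich rigidification} indeed applies.
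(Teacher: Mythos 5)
Your proposal is correct and follows essentially the same route as the paper: part (1) by reusing the equivalence $\tilde\Pi^W_\Q\simeq E^1\circ\tilde\Pi^W_\Q$ from the proof of Theorem \ref{E1formality} with $\D^*$ in place of $\D_*$, part (2) by combining this with Proposition \ref{prop:equivalence D S}, and part (3) by Hinich rigidification over $\Sm_\C$ where both functors are honest lax symmetric monoidal functors. The concern you raise about a strict lax monoidal model for $E_1$ is reasonable but unproblematic, since the first page of a spectral sequence is compatible with the filtered K\"unneth morphism.
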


\begin{proof}
The first part is proven as Theorem \ref{E1formality} replacing $\D^*$ by $\D_*$. The second part follows from the first part and the fact that $\Aa_{PL}^*(-)$ is naturally weakly equivalent to $\D^*(-)_{\Q}\simeq U\circ \Aa^*_{fil}$ (Proposition \ref{prop:equivalence D S}). The third part follows from the second part and Theorem \ref{theo: Hinich rigidification}, using the fact that both functors are ordinary lax symmetric monoidal functors when restricted to smooth varieties.
\end{proof}

\begin{rem}In \cite{Mo} it is proven that the complex homotopy type of every smooth complex variety is $E_1$-formal. This is extended to possibly singular varieties and their morphisms in \cite{CG1}. Then, a descent argument is used to prove that for nilpotent spaces (with finite type minimal models), this result descends to the rational homotopy type. Theorem $\ref{E1formality_contravariant}$ enhances the contents of \cite{CG1} in two ways: first, since descent is done at the level of functors, we obtain $E_1$-formality over $\Q$ for any complex variety, without nilpotency conditions (the only property needed is finite type cohomology). Second, the functorial nature of our statement makes $E_1$-formality at the rational level, compatible with composition of morphisms.
\end{rem}

\subsection{Formality of Hopf cooperads}

Our main theorem takes two dual forms, one covariant and one contravariant. The covariant theorem yields formality for algebraic structures (like monoids, operads, etc.), the contravariant theorem yields formality for coalgebraic structure (like the comonoid structure coming from the diagonal $X\to X\times X$ for any variety $X$). One might wonder if there is a way to do both at the same time. For example, if $M$ is a topological monoid, then $H^*(M,\Q)$ is a Hopf algebra where the multiplication comes from the diagonal of $M$ and the comultiplication comes from the multiplication of $M$. One may ask whether $S^*(M,\Q)$ is formal as a Hopf algebra. This question is not well-posed because $S^*(M,\Q)$ is not a Hopf algebra on the nose. The problem is that there does not seem to exist a model for singular chains or cochains that is strong symmetric monoidal: the standard singular chain functor $S_*(-,\Q)$ is lax symmetric monoidal and Sullivan's functor $\Aa_{PL}^*$ is oplax symmetric monoidal functor from $\cat{Top}$ to $\ch(\Q)\op$. 

Nevertheless, the functor $\Aa_{PL}^*$ is strong symmetric monoidal ``up to homotopy''. It follows that, if $M$ is a topological monoid, $\Aa_{PL}^*(M)$ has the structure of a cdga with a comultiplication up to homotopy and it makes sense to ask if it has formality as such an object. In order to formulate this more precisely, we introduce the notion of an algebraic theory. The following is inspired by Section 3 of \cite{LaVo}.

\begin{defi}
An algebraic theory is a small category $T$ with finite products. For $\Cc$ a category with finite products, a $T$-algebra in $\Cc$ is a finite product preserving functor $T\lra\Cc$.
\end{defi}

There exist algebraic theories for which the $T$-algebras are monoids, groups, rings, operads, cyclic operads, modular operads etc. 

\begin{rem}
Definitions of algebraic theories in the literature are usually more restrictive. This definition will be sufficient for our purposes.
\end{rem}

\begin{defi}
Let $T$ be an algebraic theory. Let $\kk$ be a field. Then a dg Hopf $T$-coalgebra over $\kk$ is a finite coproduct preserving functor from $T\op$ to the category of cdga's over $\kk$.
\end{defi}

\begin{rem}
Recall that the coproduct in the category of cdga's is the tensor product. It follows that a dg Hopf $T$-coalgebra for $T$ the algebraic theory of monoids is a dg Hopf algebra whose multiplication is commutative. A dg Hopf $T$-coalgebra for $T$ the theory of operads is what is usually called a dg Hopf cooperad in the literature.
\end{rem}

\begin{defi}
Let $T$ be an algebraic theory and $\Cc$ a category with products and with a notion of weak equivalences. A weak $T$-algebra in $\Cc$ is a functor $F:T\lra \Cc$ such that for each pair $(s,t)$ of objects of $T$, the canonical map
\[F(t\times s)\lra F(t)\times F(s)\]
is a weak equivalence. A weak $T$-algebra in the opposite category of $\cat{CDGA}_\kk$ is called a weak dg Hopf $T$-coalgebra.
\end{defi}

Observe that if $X:T\lra \cat{Top}$ is a $T$-algebra in topological spaces (or even a weak $T$-algebra), then $\Aa^*_{PL}(X)$ is a weak dg Hopf $T$-coalgebra. Our main theorem for Hopf $T$-coalgebras is the following.

\begin{theo}
Let $\alpha$ be a rational number different from zero. Let $X:T\lra \Var_\C$ be a $T$-algebra such that for all $t\in T$, the weight filtration on the cohomology of $X(t)$ is $\alpha$-pure. Then $\Aa_{PL}^*(X)$ is formal as a weak dg Hopf $T$-coalgebra.
\end{theo}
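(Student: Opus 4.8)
The plan is to upgrade the proof of Theorem \ref{theo: main contravariant} so that it simultaneously keeps track of the commutative-algebra structure coming from the diagonals and of the diagram indexed by $T$. First I would record the $\infty$-categorical reformulation of the statement. By definition a weak dg Hopf $T$-coalgebra only preserves products up to weak equivalence, so up to weak equivalence it is the same datum as a finite-product-preserving $\infty$-functor $\inn(T)\lra \icat{CAlg}(\ich(\Q))\op$, where the products in the target are the tensor products of commutative algebras; formality then means this $\infty$-functor is equivalent to the one determined by $H^*(X,\Q)$. As in the previous formality theorems, a rectification argument in the spirit of Theorem \ref{theo: Hinich rigidification} (the relevant colored operad encoding weak $T$-coalgebra structures being homotopically sound in characteristic zero) lets us pass between this $\infty$-categorical formality and the strict notion, so it suffices to produce the equivalence at the level of $\infty$-functors.

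Next I would exhibit $\Aa_{PL}^*(X)$ in this form through $\D^*$. Since $\inn(\Sch_\C)$ is cartesian symmetric monoidal and $\D^*:\inn(\Sch_\C)\op\lra\iMHC_\Q$ is symmetric monoidal, each $\D^*(Y)$ acquires a canonical commutative algebra structure from the diagonal $Y\to Y\times Y$, and $\D^*$ refines to a finite-product-preserving $\infty$-functor $\inn(\Sch_\C)\lra\icat{CAlg}(\iMHC_\Q)\op$ (products of schemes go to tensor products of algebras). Precomposing with the product-preserving functor $X:\inn(T)\to\inn(\Sch_\C)$ and postcomposing with the symmetric monoidal forgetful functor $\iMHC_\Q\to\ich(\Q)$ produces a finite-product-preserving $\infty$-functor $\inn(T)\to\icat{CAlg}(\ich(\Q))\op$; by Proposition \ref{prop:equivalence D S} this is precisely the weak dg Hopf $T$-coalgebra $\Aa_{PL}^*(X)$.

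Now I would run the purity argument on commutative-algebra objects. Using the symmetric monoidal equivalence $\ich^b(\MHS_\Q)\simeq\iMHC_\Q$ of Theorem \ref{theo: equivinfty}, transport the above functor to a finite-product-preserving $\infty$-functor $\inn(T)\to\icat{CAlg}(\ich^b(\MHS_\Q))\op$. As established in the proof of Theorem \ref{theo: main contravariant}, the $\alpha$-purity of the weight filtration on each $H^*(X(t),\Q)$ guarantees that this functor lands in commutative algebras whose underlying complex lies in $\ch(\MHS_\Q)^{\alpha\text{-}pure}$. The point is then that the formality of $\Pi_\Q$ on $\ch(\MHS_\Q)^{\alpha\text{-}pure}$ given by Corollary \ref{coro-purity formality Q} is realized, via Proposition \ref{decomposition_formal} and Lemma \ref{Deligne_splitting_over_Q}, by a zig-zag of \emph{lax symmetric monoidal} functors ($U$, $U\circ\tau$, $H\circ U$, all precomposed with the splitting $G$) and \emph{monoidal} natural transformations ($\Phi,\Psi$) that are objectwise quasi-isomorphisms on the $\alpha$-pure subcategory. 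Because these data are monoidal, they induce functors and natural transformations on commutative-algebra objects; precomposing the induced zig-zag with the fixed finite-product-preserving functor out of $\inn(T)$ yields a zig-zag of finite-product-preserving $\infty$-functors $\inn(T)\to\icat{CAlg}(\ich(\Q))\op$ joining $\Aa_{PL}^*(X)$ to $H^*(X,\Q)$ through objectwise quasi-isomorphisms, which is the asserted formality.

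The main obstacle is the bookkeeping of the last paragraph: one must check that the monoidal formality datum for the bare functor $\Pi_\Q$ genuinely lifts to commutative algebras and survives precomposition with the $T$-diagram while preserving the homotopy-product-preservation property at every stage of the zig-zag, so that each intermediate object is again a weak dg Hopf $T$-coalgebra and each map is a map of such. Equivalently, the delicate point is that all comparison maps $F(t\times s)\to F(t)\otimes F(s)$ remain quasi-isomorphisms along the zig-zag; this follows because the source functor out of $\inn(T)$ is product-preserving and $\Phi,\Psi$ are monoidal and objectwise quasi-isomorphisms, but it is exactly here that one uses that the entire argument has been carried out with monoidal, rather than merely additive, natural transformations.
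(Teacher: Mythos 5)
Your argument is correct and rests on the same mechanism as the paper's: the formality zig-zag of Theorem \ref{theo: main contravariant} consists of lax symmetric monoidal functors and monoidal natural transformations, hence induces a zig-zag of $\cat{CDGA}_\Q$-valued functors on $T\op$ along which the weak-product-preservation property propagates. The paper compresses all of your bookkeeping into the single observation that being a weak dg Hopf $T$-coalgebra is a \emph{property} of a functor $T\op\to \cat{CDGA}_\Q$ that is invariant under quasi-isomorphism, so the theorem is an immediate corollary of Theorem \ref{theo: main contravariant}, with no need to re-run the rectification, purity and splitting arguments inside commutative-algebra objects as you do.
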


\begin{proof}
Being a weak $T$-coalgebra is a property of a functor $T\op\to \cat{CDGA}_\kk$ that is invariant under quasi-isomorphism. Thus the result follows immediately from  Theorem \ref{theo: main contravariant}.
\end{proof}

It should be noted that knowing that $\Aa_{PL}^*(X)$ is formal as a dg Hopf $T$-coalgebra implies that the data of $H^*(X,\Q)$ is enough to reconstruct $X$ as a $T$-algebra up to rational equivalence. Indeed, recall the Sullivan spatial realization functor
\[\langle-\rangle : \cat{CDGA}_\kk \lra \cat{Top}\] 
Applying this functor to a weak dg Hopf $T$-coalgebra yields a weak $T$-algebra in rational spaces. Specializing to $\Aa^*_{PL}(X)$ where $X$ is a $T$-algebra in spaces, we get a rational model for $X$ in the sense that the map
\[X\lra \langle \Aa^*_{PL}(X)\rangle\]
is a rational weak equivalence of weak $T$-algebras whose target is objectwise rational. It should also be noted that for reasonable algebraic theories $T$ (including in particular the theory for monoids, commutative monoids, operads, cyclic operads), the homotopy theory of $T$-algebras in spaces is equivalent to that of weak $T$-algebras by the main theorem of \cite{bergnerrigidification}. In particular our weak $T$-algebra $\langle \Aa^*_{PL}(X)\rangle$ can be strictified to a strict $T$-algebra that models the rationalization of $X$. If $\Aa^*_{PL}(X)$ is formal, one also get a rational model for $X$ by applying the spatial realization to the strict Hopf $T$-coalgebra $H^*(X,\Q)$. Thus the rational homotopy type of $X$ as a $T$-algebra is a formal consequence of $H^*(X,\Q)$ as a Hopf $T$-coalgebra.

\begin{example}
Applying this theorem to the non-commutative little disks operad and framed little disks operad of subsection \ref{subsection: ncld}, we deduce that $\Aa^*_{PL}(\Aa s_{S^1})$  and $\Aa^*_{PL}(\Aa s_{S^1}\rtimes S^1)$ are formal as a weak Hopf non-symmetric cooperads. Similarly applying this to the monoid of self maps of the projective line of subsection \ref{subsection : self maps}, we deduce that $\Aa^*_{PL}(\bigsqcup_d F_d)$ is formal as a weak Hopf graded comonoid.
\end{example}

\bibliographystyle{alpha}

\bibliography{biblio}

\end{document}